\newcommand{\uline}[1]{\underline{#1}}
\newcommand{\uuline}[1]{\underline{\underline{#1}}}
\newcommand{\C}{\mathbb{C}}
\newcommand{\Ql}{\mathbb{Q}_\ell}
\newcommand{\Qlb}{\overline{\mathbb{Q}}_\ell}
\newcommand{\Z}{{\mathbb{Z}}}
\newcommand{\K}{\mathbb{K}}
\newcommand{\F}{\mathbb{F}}
\renewcommand{\O}{\mathbb{O}}
\newcommand{\E}{\mathbb{E}}
\newcommand{\bk}{\Bbbk}
\newcommand{\cA}{\mathcal{A}}
\newcommand{\gr}{{\mathrm{gr}}}
\newcommand{\Lgr}{L^\gr}
\newcommand{\dgr}{\Delta^\gr}
\newcommand{\wDelta}{\widetilde{\Delta}}
\newcommand{\ngr}{\nabla^\gr}
\newcommand{\wnabla}{\widetilde{\nabla}}
\newcommand{\Gr}{\mathrm{Gr}}
\newcommand{\cT}{\mathcal{T}}
\newcommand{\Db}{D^{\mathrm{b}}}
\newcommand{\Kb}{K^{\mathrm{b}}}
\newcommand{\sT}{\mathsf{T}}
\newcommand{\bleq}{\trianglelefteq}
\newcommand{\bgeq}{\trianglerighteq}
\newcommand{\cF}{\mathcal{F}}
\newcommand{\cG}{\mathcal{G}}
\newcommand{\scS}{\mathscr{S}}
\newcommand{\scT}{\mathscr{T}}
\newcommand{\Parity}{\mathsf{Parity}}
\newcommand{\mix}{\mathrm{mix}}
\newcommand{\Dmix}{D^\mix}
\newcommand{\Perv}{\mathsf{P}}
\newcommand{\dmix}{\Delta^{\mix}}
\newcommand{\nmix}{\nabla^{\mix}}
\newcommand{\uuE}{\uuline{\E}{}}
\newcommand{\uuF}{\uuline{\F}{}}
\newcommand{\cE}{\mathcal{E}}
\newcommand{\D}{\mathbb{D}}
\newcommand{\p}{{}^p\!}
\newcommand{\pH}{\p\mathcal{H}}
\newcommand{\oo}{{}^\circ\!}
\newcommand{\IC}{\mathcal{IC}}
\newcommand{\cP}{\mathcal{P}}
\newcommand{\Radon}{\mathsf{R}}
\newcommand{\cB}{\mathscr{B}}
\newcommand{\Gv}{\check{G}}
\newcommand{\Bv}{\check{B}}
\newcommand{\Tv}{\check{T}}
\newcommand{\cBv}{\check{\cB}}
\newcommand{\ICv}{{\check \IC}}
\newcommand{\dv}{\check{\Delta}}
\newcommand{\nv}{\check{\nabla}}
\newcommand{\cTv}{\check{\cT}}
\newcommand{\cEv}{\check{\cE}}
\newcommand{\muv}{{\check \mu}}
\newcommand{\iv}{{\check \imath}}
\newcommand{\simto}{\xrightarrow{\sim}}
\newcommand{\la}{\langle}
\newcommand{\ra}{\rangle}
\DeclareMathOperator{\Hom}{Hom}
\DeclareMathOperator{\Ext}{Ext}
\DeclareMathOperator{\Irr}{Irr}
\newcommand{\id}{\mathrm{id}}
\def\lotimes{\@ifnextchar_{\@lotimessub}{\@lotimesnosub}}
\def\@lotimessub_#1{\mathchoice{\mathbin{\mathop{\otimes}^L}_{#1}}%
  {\otimes^L_{#1}}{\otimes^L_{#1}}{\otimes^L_{#1}}}
\def\@lotimesnosub{\mathbin{\mathop{\otimes}^L}}
\newtheorem*{thm*}{Theorem}
\numberwithin{equation}{section}
\newtheorem{thm}{Theorem}[section]
\newtheorem{lem}[thm]{Lemma}
\newtheorem{prop}[thm]{Proposition}
\newtheorem{cor}[thm]{Corollary}
\theoremstyle{definition}
\newtheorem{defn}[thm]{Definition}
\theoremstyle{remark}
\newtheorem{rmk}[thm]{Remark}
\newtheorem{ex}[thm]{Example}
\title[Modular perverse sheaves on flag varieties III]{Modular perverse sheaves on flag varieties III:\\ Positivity conditions}
\author{Pramod N. Achar}
\address{Department of Mathematics\\
  Louisiana State University\\
  Baton Rouge, LA 70803\\
  U.S.A.}
\email{pramod@math.lsu.edu}
\author{Simon Riche}
\address{Universit{\'e} Blaise Pascal - Clermont-Ferrand II, Laboratoire de Math{\'e}matiques, CNRS, UMR 6620, Campus universitaire des C{\'e}zeaux, F-63177 Aubi{\`e}re Cedex, France
}
\email{simon.riche@math.univ-bpclermont.fr}
\thanks{P.A. was supported by NSF Grant No.~DMS-1001594.  S.R. was supported by ANR Grants No.~ANR-09-JCJC-0102-01, ANR-2010-BLAN-110-02 and ANR-13-BS01-0001-01.}
\begin{document}

\begin{abstract}
We further develop the general theory of the ``mixed modular derived category'' introduced by the authors in a previous paper in this series. We then use it to study positivity and $Q$-Koszulity phenomena on flag varieties.
\end{abstract}

\maketitle

\section{Introduction}
\label{sec:intro}

\subsection{}
\label{ss:intro-intro}

\newcommand{\stalkC}{(1)$_\C$}
\newcommand{\kosC}{(2)$_\C$}
\newcommand{\stalkF}{(1)$_\F$}
\newcommand{\kosF}{(2)$_\F$}

The category $\Perv_{(B)}(\cB,\C)$ of Bruhat-constructible perverse $\C$-sheaves on the flag variety $\cB$ of a complex connected reductive algebraic group $G$ has been extensively studied for decades, with much of the motivation coming from applications to the representation theory of complex semisimple Lie algebras.  Two salient features of this category are as follows :
\begin{enumerate}
\item[\stalkC] 
The stalks and costalks of the simple perverse sheaves $\IC_w(\C)$ enjoy a parity-vanishing property (see \cite{kl}).
\item[\kosC]
The category $\Perv_{(B)}(\cB,\C)$ admits a Koszul grading (see \cite{bgs}).
\end{enumerate}
It was long expected that the obvious analogues of statements~\stalkC{} and~\kosC{} would also hold for modular perverse sheaves (i.e.~for perverse sheaves with coefficients in a finite field $\F$ of characteristic $\ell>0$) under mild restrictions on $\ell$, with consequences for the representation theory of algebraic groups; see e.g.~\cite{soergel}.  But Williamson's work~\cite{williamson} implies that both of these statements fail in a large class of examples. 

The next question one may want to consider is then:
what could take the place of~\stalkC{} and~\kosC{} in the setting of modular perverse sheaves?  Fix a finite extension $\K$ of $\Ql$ whose ring of integers $\O$ has $\F$ as residue field.  In this paper, we
consider the following statements as possible substitutes for those above:
\begin{enumerate}
\item[\stalkF] The \emph{stalks} of the $\O$-perverse sheaves $\IC_w(\O)$ are torsion-free. Equivalently, the stalks of the $\F$-perverse sheaves $\F \lotimes \IC_w(\O)$ enjoy a parity-vanishing property.
\item[\kosF] The category $\Perv_{(B)}(\cB,\F)$ admits a \emph{standard $Q$-Koszul} grading.
\end{enumerate}
The definition of a standard $Q$-Koszul category---a generalization of the ordinary Koszul property, due to Parshall--Scott~\cite{ps}---will be recalled in~\S\ref{ss:qkoszul}.
The status of these conditions in various examples will be discussed at the end of~\S\ref{ss:pervmix}.

One of the main results of
this paper is that statements~\stalkF{} and \kosF{} are nearly equivalent to each other.  Statement~\stalkF{} may be compared to (and was inspired by) the Mirkovi\'c--Vilonen conjecture~\cite{mv} (now a theorem in most cases~\cite{arider}), which asserts that spherical $\IC$-sheaves on the affine Grassmannian have torsion-free stalks.  Statement~\kosF{} is closely related to certain conjectures of Cline, Parshall, and Scott~\cite{cps,ps} on representations of algebraic groups.

\subsection{Mixed modular perverse sheaves}
\label{ss:pervmix}

In the characteristic zero case, statements~\stalkC{} and~\kosC{} are best understood in the framework of mixed $\Qlb$-sheaves. In~\cite{modrap2} we defined and studied a replacement for these objects in the modular context (when $\ell$ is good for $G$). More precisely, for $\E=\K$, $\O$, or $\F$ we defined a triangulated category $\Dmix_{(B)}(\cB,\E)$, endowed with a ``Tate twist'' $\la 1 \ra$ and a ``perverse t-structure'' whose heart we denote by $\Perv^\mix_{(B)}(\cB,\E)$. This category is also endowed with a t-exact ``forgetful'' functor $\Dmix_{(B)}(\cB,\E) \to \Db_{(B)}(\cB,\E)$, where the usual Bruhat-constructible derived category $\Db_{(B)}(\cB,\E)$ is endowed with the usual perverse t-structure. The main tool in this construction is the category $\Parity_{(B)}(\cB,\E)$ of parity complexes on $\cB$ in the sense of Juteau--Mautner--Williamson~\cite{jmw}. The indecomposable objects in the latter category are naturally parametrized by $W \times \Z$; we denote as usual by $\cE_w$ the object associated with $(w,0)$.

The category $\Perv^\mix_{(B)}(\cB,\F)$ is a graded quasihereditary category, and can be considered a ``graded version'' of the category $\Perv_{(B)}(\cB,\F)$. The analogue of this category when $\F$ is replaced by $\K$ can be identified with the category studied in~\cite[\S 4.4]{bgs}, and is known to be Koszul (and even \emph{standard Koszul}). One might wonder if the category $\Perv_{(B)}(\cB,\F)$ enjoys a similar property, or some weaker analogues. The main theme of this paper is to relate these properties to properties of the usual perverse sheaves on $\cB$ or the flag variety $\cBv$ of the Langlands dual reductive group. More precisely, we consider the following four properties:
\begin{enumerate}
\item
The category $\Perv^\mix_{(B)}(\cB,\F)$ is positively graded.\label{it:pmix-pos}
\item
The category $\Perv^\mix_{(B)}(\cB,\F)$ is standard $Q$-Koszul.\label{it:pmix-qkos}
\item
The category $\Perv^\mix_{(B)}(\cB,\F)$ is metameric.\label{it:pmix-metameric}
\item
The category $\Perv^\mix_{(B)}(\cB,\F)$ is standard Koszul.\label{it:pmix-kos}
\end{enumerate}
Here, condition~\eqref{it:pmix-pos} is a natural condition defined and studied in~\S\ref{ss:nonneg}. As explained above, condition~\eqref{it:pmix-qkos}---which is stronger than~\eqref{it:pmix-pos}---was introduced by Parshall--Scott~\cite{ps}; see~\S\ref{ss:qkoszul}. Condition~\eqref{it:pmix-metameric}---which is also stronger than~\eqref{it:pmix-pos} but unrelated to~\eqref{it:pmix-qkos} a priori---is a technical condition defined and studied in~\S\ref{ss:metameric}. Condition~\eqref{it:pmix-kos} is the standard condition studied e.g.~in~\cite{adl, mazorchuk}; see also~\cite{bgs}. This condition is stronger than~\eqref{it:pmix-metameric} and ~\eqref{it:pmix-qkos}.

Our main result can be stated as follows. (Here, $\cEv_w$, resp.~$\ICv_w$, is the parity sheaf, resp.~$\IC$-sheaf on $\cBv$ naturally associated with $w$. This statement combines parts of Theorems~\ref{thm:main1}, \ref{thm:main2}, and~\ref{thm:koszulity}.)

\begin{thm*}
\label{thm:main-intro}
Assume that $\ell$ is good for $G$.

\begin{enumerate}
\item 
\label{it:thm-pos}
The following conditions are equivalent:
\begin{enumerate}
\item
The category $\Perv^\mix_{(B)}(\cB,\F)$ is positively graded.
\item
For all $w \in W$, the parity sheaf $\cEv_w(\F)$ on $\cBv$ is perverse.
\end{enumerate}
\item 
\label{it:thm-qkos}
The following conditions are equivalent:
\begin{enumerate}
\item
The category $\Perv^\mix_{(B)}(\cB,\F)$ is metameric.\label{it:pwb-metameric}
\item
The category $\Perv^\mix_{(\Bv)}(\cBv,\F)$ is standard $Q$-Koszul.
\item
For all $w \in W$, the parity sheaf $\cE_w(\F)$ is perverse, and the $\O$-perverse sheaf $\ICv_w(\O)$ on $\cBv$ has torsion-free stalks.\label{it:ew-tor}
\end{enumerate}
\item 
\label{it:thm-kos}
The following conditions are equivalent:
\begin{enumerate}
\item
The category $\Perv^\mix_{(B)}(\cB,\F)$ is standard Koszul.
\item
For all $w \in W$ we have $\cE_w(\F) \cong \IC_w(\F)$.
\item For all $w \in W$, the $\O$-perverse sheaf $\IC_w(\O)$ on $\cB$ has torsion-free stalks and costalks.\label{it:ew-ic}
\end{enumerate}
Moreover, these statements hold if and only if the analogous statements for the Langlands dual group hold.
\end{enumerate}
\end{thm*}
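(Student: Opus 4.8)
The plan is to run everything through the modular Koszul duality equivalence
\[
  \kappa \colon \Dmix_{(B)}(\cB,\F) \xrightarrow{\sim} \Dmix_{(\Bv)}(\cBv,\F)
\]
furnished by the general theory developed earlier, which swaps the internal shift $\la 1 \ra$ with the cohomological shift $[1]$, sends standard objects to costandard objects (up to $w \mapsto w^{-1}$ and a twist), and sends a projective generator $P$ of $\Perv^\mix_{(B)}(\cB,\F)$ to a direct sum of shifts of parity sheaves on $\cBv$. The first task is to extract the resulting dictionary: because $\kappa$ exchanges the two shifts, graded concentration of the internal-degree-graded algebra $\bigoplus_n \Hom(P,P\la n\ra)$ on the $\cB$-side is translated into cohomological concentration of $\bigoplus_n \Hom(\kappa P,\kappa P[n])$ on the $\cBv$-side, i.e.\ into the position of the parity sheaves $\cEv_w$ (and of the $\ICv_w$) relative to the perverse $t$-structure on $\cBv$. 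Each of the four conditions on $\Perv^\mix_{(B)}(\cB,\F)$ becomes visible through this identification.

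For part (1): positive gradedness, as defined in \S\ref{ss:nonneg}, says that $\bigoplus_n\Hom(P,P\la n\ra)$ is concentrated in non-negative degrees; under $\kappa$ this becomes the vanishing $\Ext^{<0}(\cEv_v,\cEv_w)=0$ in $\Dmix_{(\Bv)}(\cBv,\F)$ for all $v,w$. The implication from ``all $\cEv_w$ perverse'' is immediate (they then lie in an abelian heart), and the converse follows by induction along the Bruhat order: if some $\cEv_w$ failed to be perverse, a stalk of $\cEv_w$ at a stratum indexed by some $a<w$ would be too large, and pairing against the (perverse, by induction) parity sheaf $\cEv_a$ would produce a nonzero negative $\Ext$-group. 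A minor point is that positive gradedness is detected by $P$ alone, which rests on the graded quasi-hereditary structure and $\Ext^{>0}(\dmix_v,\nmix_w)=0$.

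Parts (2) and (3) are refinements. In part (2), the equivalence of (a) and (b) is the heart of the matter: \emph{metamericity}, as defined in \S\ref{ss:metameric}, is set up precisely so that its condition on the graded $\dmix$-filtrations of the indecomposable projectives, transported through $\kappa$, unwinds term-by-term into the Parshall--Scott definition of standard $Q$-Koszulity from \S\ref{ss:qkoszul}; I would prove this by matching the two definitions directly, taking care to identify the ``$Q$-layer'' on each side and to track the grading. The equivalence with (c) combines part (1) (applied with $G$ and $\Gv$ interchanged), which supplies the clause ``$\cE_w(\F)$ perverse'' as part of the already-implied positive gradedness of $\Perv^\mix_{(\Bv)}(\cBv,\F)$, with the observation that once $\cEv_w$ is perverse it carries genuine $\dv$- and $\nv$-filtrations, so that the remaining metameric refinement fails exactly when a stalk of $\ICv_w(\O)$ has torsion: such torsion puts cohomology of $\F \lotimes \ICv_w(\O)$ in two consecutive degrees, destroying purity of a filtration layer. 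In part (3), I would first establish (b)$\Leftrightarrow$(c) by a parity argument in the spirit of Juteau--Mautner--Williamson: $\IC_w(\O)$ has torsion-free stalks and costalks iff $\IC_w(\F)\cong\F \lotimes \IC_w(\O)$ has parity-vanishing stalks and costalks (torsion would again force cohomology in consecutive degrees; and over $\O$, torsion-freeness of stalks and of costalks are equivalent, via Verdier self-duality of $\IC_w(\O)$ and the behaviour of $\D$ on stalks), iff $\IC_w(\F)$ is a parity complex, and the unique indecomposable parity complex with the support and normalisation of $\IC_w(\F)$ is $\cE_w(\F)$. Then (a)$\Leftrightarrow$(b): the forward direction runs standard Koszulity down the hierarchy and through the translations of parts (1)--(2) for both $G$ and $\Gv$, yielding that the $\cEv_w$ and the $\cE_w$ are perverse and that $\IC_w(\O)$ and $\ICv_w(\O)$ have torsion-free stalks---in particular, by (b)$\Leftrightarrow$(c), $\cE_w(\F)\cong\IC_w(\F)$---while the converse, that $\cE_w(\F)\cong\IC_w(\F)$ for all $w$ forces standard Koszulity, proceeds by bootstrapping characteristic-zero-like purity of the $\IC$-sheaves in the mixed category. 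Langlands self-duality is then formal: standard Koszulity for $\cB$ gives, via metamericity and part (2), torsion-free stalks of $\ICv_w(\O)$, hence torsion-free stalks and costalks, hence $\cEv_w(\F)\cong\ICv_w(\F)$, hence standard Koszulity for $\cBv$; the converse is symmetric.

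I expect the main obstacle to be the matching ``metameric $\Leftrightarrow$ standard $Q$-Koszul for the dual group'' in part (2): both conditions are intricate, and establishing their equivalence demands careful control of how $\kappa$ transports the $\dmix$-filtration data---in particular the ``$Q$-layer'' and the grading shifts---so that the somewhat ad hoc condition of \S\ref{ss:metameric} is recognised as precisely the geometric shadow of standard $Q$-Koszulity. A secondary difficulty is the implication ``$\cE_w(\F)\cong\IC_w(\F)$ for all $w$ $\Rightarrow$ standard Koszulity'' in part (3), where extracting full Koszulity---rather than merely positive gradedness and metamericity---requires the bootstrapping of purity alluded to above from the identification of parity sheaves with $\IC$-sheaves.
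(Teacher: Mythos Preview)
Your overall strategy---translate everything through the Koszul-type equivalence between $\Dmix_{(B)}(\cB,\F)$ and $\Dmix_{(\Bv)}(\cBv,\F)$---is the paper's strategy as well, and your identification of part~(2) as the crux is correct. But there are two genuine gaps.

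\medskip

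\textbf{Part (2): the metameric $\Leftrightarrow$ standard $Q$-Koszul step.} You propose to ``match the two definitions directly,'' and you describe metamericity as a condition on the graded $\dmix$-filtrations of the indecomposable projectives. That is not what metamericity says: by Definition~\ref{defn:metameric} it is a condition on the weight-graded pieces $\Gr^W_i\dgr_s$ of the \emph{standard} objects (each such piece, after untwisting, must admit a $\Delta^\circ$-filtration in $\cA^\circ$). On the other side, standard $Q$-Koszulity is an $\Ext$-vanishing condition between $\Delta^\circ$'s and $\ngr$'s. These do not line up term-by-term under the equivalence; there is no direct dictionary entry of the form ``$\Gr^W_i\dgr_s$ on $\cB$'' $\leftrightarrow$ ``$\Ext^k(\dv^\circ_u,\nv^\mix_t\la n\ra)$ on $\cBv$.'' The paper's mechanism is different and essential: Theorem~\ref{thm:qher-wdelta} converts metamericity into the existence of the auxiliary objects $\wDelta^\gr_s$ satisfying four homological properties, and the proof of Theorem~\ref{thm:main2} shows that $\sigma(\wDelta^\mix_w) \cong \nv^\circ_{w_0 w^{-1}}$, where $\nv^\circ$ is defined via the \emph{baric} truncation of~\S\ref{ss:baric}. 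The $Q$-Koszul $\Ext$-vanishing is then read off from pointwise purity of the $\nv^\circ$'s via Lemma~\ref{lem:stalks-dcirc-ic}. Without the $\wDelta$ objects and the baric machinery your ``direct matching'' has nothing to grab onto. (A smaller point: it is $\sigma=\kappa\circ(\Radon^\mix)^{-1}$, not $\kappa$ itself, that sends projectives to parity sheaves; $\kappa$ sends tiltings to parities.)

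\medskip

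\textbf{Part (3): the self-duality claim.} You assert that ``over $\O$, torsion-freeness of stalks and of costalks are equivalent, via Verdier self-duality of $\IC_w(\O)$.'' This is false. Over $\O$ the perverse sheaf $\IC_w(\O)$ is \emph{not} Verdier self-dual; one has $\D(\IC_w(\O))\cong {}^{p_+}\!\IC_w(\O)$, which differs from $\IC_w(\O)$ exactly by torsion. Indeed Williamson's counterexamples exhibit torsion in the costalks of $\IC_w(\O)$ but not in the stalks, so the two conditions are genuinely distinct (this is precisely why part~(2)(c) of the theorem requires only torsion-free stalks while part~(3)(c) requires both). Your route (a)$\Rightarrow$(b) via parts~(1) and~(2) therefore delivers only torsion-free stalks of $\IC_w(\O)$ and of $\ICv_w(\O)$, not torsion-free costalks of either, and the final step does not close. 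The paper instead argues (a)$\Rightarrow$(b) more directly: standard Koszulity gives $\Ext^k(\dmix_s,\IC^\mix_t\la n\ra)=\Ext^k(\IC^\mix_s,\nmix_t\la n\ra)=0$ unless $n=-k$, which by adjunction says that each $\IC^\mix_w(\F)$ has stalks \emph{and} costalks pure of weight~$0$, hence is itself a parity complex and so coincides with $\cE^\mix_w(\F)$. (The paper's Theorem~\ref{thm:koszulity} organises this slightly differently, going through semisimplicity of $\Perv^\circ$, but the point is that one must extract parity information on both stalks and costalks directly from the Koszul condition.)
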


In this theorem, part~\eqref{it:thm-pos} is an immediate consequence of the results of~\cite{modrap2}. Part~\eqref{it:thm-kos} is also not difficult to prove.  However, as noted earlier, Williamson~\cite{williamson} has exhibited counterexamples to condition~\eqref{it:ew-ic} in groups of arbitrarily large rank.

Part~\eqref{it:thm-qkos} of the theorem is the most interesting and delicate case, and its proof requires the introduction of new tools.  Williamson has informed us that condition~\eqref{it:ew-tor} holds for $G = \mathrm{GL}(n)$ with $n \le 9$ in all characteristics.  His counterexamples to~\eqref{it:ew-ic} all involve torsion only in the costalks of the $\IC_w(\O)$, not in their stalks.  Thus, as of this writing, there are no known counterexamples to the conditions in part~\eqref{it:thm-qkos}.

\subsection{Weights}
\label{ss:intro-weights}

To prove part~\eqref{it:thm-qkos} of the theorem above, we introduce a formalism which plays a role similar to Deligne's theory of weights for mixed $\Qlb$-perverse sheaves. (However, it is much less powerful than Deligne's theory: in particular, the existence of a ``weight filtration'' on mixed modular perverse sheaves is not automatic.) More precisely, in~\S\ref{ss:weights} we define what it means for an object of $\Dmix_{(B)}(\cB,\E)$ to have weights${}\leq n$ or${}\geq n$, and we prove that the $!$- and $*$-pullback and pushforward functors associated with locally closed inclusions of unions of Bruhat cells enjoy the same stability properties for this formalism as in the case of mixed $\Qlb$-sheaves (cf.~\cite[Stabilit\'es~5.1.14]{bbd}).

Next, in~\S\ref{ss:perv-circ}, we use the theory of weights to define a new, smaller abelian category $\Perv^\circ_{(B)}(\cB,\E) \subset \Dmix_{(B)}(\cB,\E)$.  This is \emph{not} the heart of a t-structure on $\Dmix_{(B)}(\cB,\E)$; for instance, when $\E = \Qlb$, it is the category consisting of semisimple pure perverse sheaves of weight $0$. The category $\Perv^\circ_{(B)}(\cB,\F)$ need not be semisimple, but it is always quasihereditary, so one may speak of standard and costandard objects in $\Perv^\circ_{(B)}(\cB,\F)$. These objects are parametrized by $W$, and the standard, resp.~costandard, object associated with $w$ is denoted $\Delta^\circ_w(\F)$, resp.~$\nabla^\circ_w(\F)$.  A careful study of the structure of the $\Delta^\circ_w(\F)$, carried out in~\S\ref{ss:stalks-deltao}, is the glue linking the various assertions in part~\eqref{it:thm-qkos} of the theorem.  


\subsection{Interpreting the $\Delta^\circ_w(\F)$}
\label{ss:interpret}

In the course of the proof, we will see that if $\Perv_{(B)}(\cB,\F)$ is positively graded, then $\Delta^\circ_w(\F) \cong \F \lotimes \IC^\mix_w(\O)$. This property is analogous to the fact~\cite[\S8]{mv} that in the category of spherical perverse sheaves on the affine Grassmannian,  standard objects are of the form $\F \lotimes \IC_\lambda(\O)$.  Of course, in the setting of~\cite{mv}, there is a representation-theoretic interpretation for these objects as well: they correspond to Weyl modules under the geometric Satake equivalence.

If one hopes to prove that the conditions in part~\eqref{it:thm-qkos} of the theorem are actually true, it will likely be useful to find a representation-theoretic interpretation of the $\Delta^\circ_w(\F)$.  One candidate is the class of \emph{reduced standard modules} introduced by Cline--Parshall--Scott~\cite{cps}. These are certain representations of an algebraic group, obtained by modular reduction of irreducible quantum group representations.  It is likely that under the equivalence of~\cite[Theorem~2.4]{modrap1}, reduced standard modules correspond to objects of the form $\F \lotimes \IC_w(\O)$. With this in mind, condition~\eqref{it:pwb-metameric} should be compared to~\cite[Conjecture~6.5]{cps}, and condition~\eqref{it:ew-tor} to~\cite[Conjecture~6.2]{cps}.  (See~\cite{ps,ps:qka} for other results about standard $Q$-Koszulity in the context of representations of algebraic groups.)

There are further parallels between $\Perv^\circ_{(B)}(\cB,\F)$ and the affine Grassmannian that may lead to future insights.  We have already noted that condition~\eqref{it:ew-tor} resembles the Mirkovi\'c--Vilonen conjecture.  In fact, a version of the meta\-meric property (see~\cite[Corollary~5.1.13]{bk}) plays a role in the proof of that conjecture. Separately, the conditions in part~\eqref{it:thm-qkos} imply that the $\cEv_w(\F)$ are precisely the tilting objects in $\Perv^\circ_{(\Bv)}(\cBv,\F)$.  This is similar to the main result of~\cite{jmw2}, which relates spherical parity sheaves to tilting modules via the geometric Satake equivalence.


\subsection{Acknowledgements}
\label{ss:acknowledgements}

We thank Geordie Williamson for stimulating discussions.

\subsection{Contents}
\label{ss:contents}

Section~\ref{sec:homological} contains general results on positively graded quasihereditary categories, including metameric and standard $Q$-Koszul categories.  In Sections~\ref{sec:weights} and~\ref{sec:ico}, we work in the general setting of a stratified variety satisfying the assumptions of~\cite[\S\S2--3]{modrap2}.  These sections develop the theory of weights for $\Dmix_\scS(X,\F)$, and contain the definition of $\Perv^\circ_\scS(X,\F)$.  Finally, in Section~\ref{sec:koszulity} we concentrate on the case of flag varieties, and prove our main theorems.

\section{Positivity conditions for graded quasihereditary categories}
\label{sec:homological}

Throughout this section, $\bk$ will be a field, and $\cA$ will be a finite-length $\bk$-linear abelian category.

\subsection{Graded quasihereditary categories}
\label{ss:qshcategories}

We begin by recalling the definition of graded quasihereditary categories. We refer to~\cite[Appendix~A]{modrap2} for reminders on the main properties of these categories.

Assume $\cA$ is equipped with an automorphism $\la 1\ra: \cA \to \cA$.  Let $\Irr(\cA)$ be the set of isomorphism classes of irreducible objects of $\cA$, and let $\scS = \Irr(\cA)/\Z$, where $n \in \Z$ acts on $\Irr(\cA)$ by $\la n\ra$.  Assume that $\scS$ is equipped with a partial order $\le$, and that for each $s \in \scS$, we have a fixed representative simple object $\Lgr_s$. Assume also we are given, for any $s \in \scS$, objects $\dgr_s$ and $\ngr_s$, and morphisms $\dgr_s \to \Lgr_s$ and $\Lgr_s \to \ngr_s$. For $\scT \subset \scS$, we denote by $\cA_{\scT}$ the Serre subcategory of $\cA$ generated by the objects $\Lgr_t \la n \ra$ for $t \in \scT$ and $n \in \Z$. We write $\cA_{\leq s}$ for $\cA_{\{t \in \scS \mid t \leq s\}}$, and similarly for $\cA_{<s}$.

\begin{defn}\label{defn:qhered}
The category $\cA$ (with the data above) is said to be \emph{graded quasihereditary} if the following conditions hold:
\begin{enumerate}
\item The set $\scS$ is finite.\label{it:qh-def-fin}
\item For each $s \in \scS$, we have 
\[
\Hom(\Lgr_s,\Lgr_s \la n \ra) = \begin{cases}
\bk & \text{if $n=0$;} \\
0 & \text{otherwise.}
\end{cases}
\]
 \label{it:qh-def-split}
\item The kernel of $\dgr_s \to \Lgr_s$ and the cokernel of $\Lgr_s \to \ngr_s$ belong to $\cA_{<s}$.\label{it:qh-def-ker}
\item For any closed subset $\scT \subset \scS$ (in the order topology), if $s \in \scT$ is maximal, then $\dgr_s \to \Lgr_s$ is a projective cover in $\cA_{\scT}$, and $\Lgr_s \to \ngr_s$ is an injective envelope in $\cA_{\scT}$.\label{it:qh-def-cover}
\item We have $\Ext^2(\dgr_s, \ngr_t\la n\ra) = 0$ for all $s, t \in \scS$ and $n \in \Z$.\label{it:qh-def-ext2}
\end{enumerate}
\end{defn}

Recall (see~\cite[Theorem~A.3]{modrap2}) that if $\cA$ is graded quasihereditary then it has enough projective objects, and that each projective object admits a standard filtration, i.e.~a filtration with subquotients of the form $\dgr_t \la n \ra$ ($t \in \scS$, $n \in \Z$). Moreover, if we denote by $P_s^\gr$ the projective cover of $\Lgr_s$, then a graded form of the reciprocity formula holds: 
\begin{equation}
\label{eqn:reciprocity}
(P^\gr_s : \dgr_t \la n \ra) = [\ngr_t \la n \ra : \Lgr_s],
\end{equation}
where the left-hand side denotes the multiplicity of $\dgr_t \la n \ra$ in any standard filtration of $P^\gr_s$, and the right-hand side denotes the usual multiplicity as a composition factor. Similar claims hold for injective objects.

Below we will also consider some (ungraded) quasihereditary categories: these are categories satisfying obvious analogues of the conditions in Definition~\ref{defn:qhered}.

Later we will need the following properties.

\begin{lem}
\label{lem:qher-closed-subset}
Let $\scT \subset \scS$ be a closed subset. 
\begin{enumerate}
\item
The subcategory $\cA_{\scT} \subset \cA$ is a graded quasihereditary category, with standard (resp.~costandard) objects $\dgr_t$ (resp.~$\ngr_t$) for $t \in \scT$. Moreover, the functor $\iota_{\scT}: \Db \cA_{\scT} \to \Db \cA$ induced by the inclusion $\cA_{\scT} \subset \cA$ is fully faithful.
\item
The Serre quotient $\cA / \cA_{\scT}$ is a graded quasihereditary category for the order on $\scS \smallsetminus \scT$ obtained by restriction from the order on $\scS$. The standard (resp.~costandard) objects are the images in the quotient of the objects $\dgr_s$ (resp.~$\ngr_s$) for $s \in \scS \smallsetminus \scT$.
\item
The natural functor $\Db(\cA) / \Db(\cA_{\scT}) \to \Db(\cA/\cA_{\scT})$ (where the left-hand side is the Verdier quotient) is an equivalence. Moreover, the functors $\Pi_{\scT} : \Db(\cA) \to \Db(\cA/\cA_{\scT})$ and $\iota_{\scT}$ admit left and right adjoints, denoted $\Pi_{\scT}^R$, $\Pi_{\scT}^L$, $\iota_{\scT}^R$, $\iota_{\scT}^L$, which satisfy
\begin{equation}
\label{eqn:adjoints-pi-d-n}
\Pi_\scT^L \circ \Pi_{\scT}(\dgr_s) \cong \dgr_s, \qquad \Pi_\scT^R \circ \Pi_{\scT}(\ngr_s) \cong \ngr_s
\end{equation}
and such that, for any $M$ in $\Db(\cA)$, the adjunction morphisms induce functorial triangles
\[
\iota_{\scT} \iota_{\scT}^R M \to M \to \Pi_\scT^R \Pi_\scT M \xrightarrow{[1]}, \qquad \Pi_\scT^L \Pi_\scT M \to M \to \iota_{\scT} \iota_{\scT}^L M \xrightarrow{[1]}.
\]
\end{enumerate}
\end{lem}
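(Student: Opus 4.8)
The plan is to establish the three parts in order, since part (1) supplies the structural facts about $\cA_\scT$ and $\iota_\scT$ that feed into the adjunction statements in parts (2) and (3).

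For part (1), I would verify conditions \eqref{it:qh-def-fin}--\eqref{it:qh-def-ext2} of Definition \ref{defn:qhered} for $\cA_\scT$ directly. Conditions \eqref{it:qh-def-fin} and \eqref{it:qh-def-split} are immediate. For \eqref{it:qh-def-ker}, one needs to know that $\dgr_t$ and $\ngr_t$ for $t \in \scT$ actually lie in $\cA_\scT$; this follows because $\scT$ is closed and the kernel of $\dgr_t \to \Lgr_t$ lies in $\cA_{<t} \subset \cA_\scT$ (similarly for $\ngr_t$). Condition \eqref{it:qh-def-cover} is essentially built into the statement for $\cA$, since any closed subset of $\scT$ is a closed subset of $\scS$, and the projective cover / injective envelope conditions are inherited (one checks that a projective cover in $\cA_{\scT'}$ for $\scT' \subseteq \scT$ computed inside $\cA$ remains one inside $\cA_\scT$). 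Condition \eqref{it:qh-def-ext2} will follow from full faithfulness of $\iota_\scT$: once we know $\Ext^i_{\cA_\scT}(\dgr_s, \ngr_t\la n\ra) \cong \Ext^i_{\cA}(\dgr_s,\ngr_t\la n\ra)$ for all $i$, the vanishing is inherited from $\cA$. The key point is therefore the full faithfulness of $\iota_\scT: \Db\cA_\scT \to \Db\cA$. For this I would use the standard criterion: it suffices to show $\Ext^i_{\cA}(\Lgr_t\la m\ra, \Lgr_{t'}\la n\ra)$ is computed the same way in $\cA_\scT$ as in $\cA$, which by dimension shift and the quasihereditary recollement reduces to showing that $\cA_\scT$ is closed under extensions in $\cA$ (true, as it is a Serre subcategory) and that the higher $\Ext$'s do not "leak out" — this is a consequence of the recollement $(\cA_\scT, \cA, \cA/\cA_\scT)$ together with the highest-weight structure; alternatively one invokes \cite[Theorem~A.3]{modrap2} and its proof, which already sets up exactly this recollement machinery.

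For part (2), I would check the quasihereditary axioms for $\cA/\cA_\scT$. Since $\cA_\scT$ is a Serre subcategory generated by the $\Lgr_t\la n\ra$ with $t \in \scT$, the simple objects of the quotient are the images of $\Lgr_s\la n\ra$ with $s \notin \scT$; conditions \eqref{it:qh-def-fin}, \eqref{it:qh-def-split} transfer easily. The essential observation is that the Serre quotient functor $\Pi_\scT$ is exact and sends $\dgr_s \mapsto$ its image, and since $\scT$ is closed, for $s \notin \scT$ the whole of $\cA_{<s}$ maps into the quotient faithfully enough that the kernel/cokernel conditions \eqref{it:qh-def-ker} and the projective-cover conditions \eqref{it:qh-def-cover} are preserved — here one uses that closed subsets of $\scS \smallsetminus \scT$ correspond to closed subsets of $\scS$ containing $\scT$. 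Condition \eqref{it:qh-def-ext2} follows from part (3) below (or can be proved in tandem), using that $\Ext$-groups in the quotient inject into or agree with those in $\cA$ via $\Pi_\scT^R$.

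For part (3), the equivalence $\Db(\cA)/\Db(\cA_\scT) \xrightarrow{\sim} \Db(\cA/\cA_\scT)$ is a general fact for a Serre subcategory whose inclusion-induced derived functor is fully faithful (part (1)): this is a theorem of Miyachi / Keller in this level of generality, and I would cite it, checking only the hypothesis, namely that every object of $\cA/\cA_\scT$ lifts to $\cA$ and that $\Db\cA_\scT \to \Db\cA$ is fully faithful with essential image a thick subcategory closed under the relevant operations. Given the equivalence, $\Pi_\scT$ is identified with a Verdier quotient, and $\iota_\scT$ with the inclusion of a thick subcategory; the existence of left and right adjoints $\Pi_\scT^L, \Pi_\scT^R, \iota_\scT^L, \iota_\scT^R$ then follows from Bousfield localization theory once one knows the subcategory $\Db\cA_\scT$ is generated by a set of compact objects and the ambient category has the needed (co)products — here, since everything is finite-length, one argues more directly: the adjoints exist because the quasihereditary recollement provides functorial triangles. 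Concretely, I would construct the triangles $\iota_\scT\iota_\scT^R M \to M \to \Pi_\scT^R\Pi_\scT M \xrightarrow{[1]}$ by noting that for $M$ a standard or costandard object they are the familiar recollement triangles, then extend to all of $\Db\cA$ by dévissage using that standard (resp. costandard) objects generate. The isomorphisms \eqref{eqn:adjoints-pi-d-n} are then read off: $\Pi_\scT^L\Pi_\scT(\dgr_s) \cong \dgr_s$ because $\dgr_s$ has no subquotients or extensions "pointing into" $\cA_\scT$ from below in the relevant direction (for $s \notin \scT$ and $\scT$ closed, $\Hom(\cA_\scT, \dgr_s)$ and the relevant $\Ext^1$ vanish), and dually for $\ngr_s$.

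The main obstacle I anticipate is the full faithfulness of $\iota_\scT$ in part (1): this is the linchpin that makes the derived quotient description in part (3) valid and that transfers the $\Ext^2$-vanishing. In a general abelian category a Serre inclusion need not be derived-fully-faithful, so one genuinely needs the quasihereditary structure — specifically the existence of the standard/costandard objects with their $\Ext$-orthogonality and the recollement gluing — to make it work. I would expect the cleanest route is to reuse the recollement $(i_{\scT *}\Db\cA_\scT,\ \Db\cA,\ \Db(\cA/\cA_\scT))$ already implicit in \cite[Appendix~A]{modrap2}, rather than reproving it from scratch.
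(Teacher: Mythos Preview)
Your overall outline is reasonable, but there is a circularity in part~(1). You want axiom~\eqref{it:qh-def-ext2} for $\cA_\scT$ to follow from full faithfulness of $\iota_\scT$, and then prove full faithfulness by showing that $\Ext$-groups between simples agree; but the latter, in degrees $\ge 2$, \emph{is} full faithfulness, and neither ``higher $\Ext$'s do not leak out'' nor \cite[Theorem~A.3]{modrap2} (which concerns standard filtrations of projectives, not recollement) supplies an independent argument. The paper reverses the order: axiom~\eqref{it:qh-def-ext2} is checked first, using only that for a Serre subcategory the map $\Ext^2_{\cA_\scT}(\dgr_s,\ngr_t\la n\ra) \to \Ext^2_\cA(\dgr_s,\ngr_t\la n\ra)$ is injective (cf.~\cite[Lemma~3.2.3]{bgs}). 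Once $\cA_\scT$ is known to be graded quasihereditary, the formula $\Ext^k(\dgr_s,\ngr_t\la n\ra)=0$ for $k\ge1$ holds in both $\cA_\scT$ and $\cA$; since the standards and the costandards each generate the respective derived category, full faithfulness follows by the usual d\'evissage on these generators.

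In parts~(2) and~(3) your plan would work once part~(1) is repaired, but the paper's route is different. For axiom~\eqref{it:qh-def-ext2} in $\cA/\cA_\scT$ the paper does not defer to part~(3); it argues directly via Gabriel \cite{gabriel} that $\cA_\scT$ is localizing, so the quotient has enough injectives, and these (being images of injectives in $\cA$) carry costandard filtrations, whence the $\Ext^2$-vanishing. For part~(3) the paper does not invoke Miyachi--Keller or Bousfield localization: instead it observes that the $\dgr_s$ form a graded exceptional sequence in $\Db(\cA)$ in the sense of \cite{bezru,bezru2}, and that machinery directly produces the adjoints $\iota_\scT^L,\iota_\scT^R,{}'\Pi_\scT^L,{}'\Pi_\scT^R$, the functorial triangles, and the identities~\eqref{eqn:adjoints-pi-d-n}; the equivalence $\Db(\cA)/\Db(\cA_\scT)\simto\Db(\cA/\cA_\scT)$ is then deduced from these, rather than serving as their source. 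Your Miyachi--Keller approach is also valid and has the virtue of being a general abelian-category statement, but the exceptional-sequence argument is more self-contained here and delivers all the adjunction data at once.
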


\begin{proof}
(1) It is clear that $\cA_{\scT}$ satisfies the first four conditions in Definition~\ref{defn:qhered}. To check that it satisfies the fifth condition, one simply observes that the natural morphism $\Ext^2_{\cA_{\scT}}(\dgr_s, \ngr_t\la n\ra) \to \Ext^2_{\cA}(\dgr_s, \ngr_t\la n\ra)$ is injective for $s,t \in \scT$, $n \in \Z$, see e.g.~\cite[Lemma 3.2.3]{bgs}. Since the second space is trivial by assumption, the first one is trivial also.

Now it follows from the definitions that the category $\Db \cA_{\scT}$ is generated (as a triangulated category) by the objects $\dgr_t \la n \ra$ for $t \in \scT$ and $n \in \Z$, as well as by the objects $\ngr_t \la n \ra$ for $t \in \scT$ and $n \in \Z$. Hence, by a standard argument, to prove that $\iota_{\scT}$ is fully faithful, it is enough to prove that for $s,t \in \scT$ and $k,n \in \Z$ the natural morphism
\[
\Ext^k_{\cA_{\scT}}(\dgr_s, \ngr_t \la n \ra) \to \Ext^k_{\cA}(\dgr_s, \ngr_t \la n \ra)
\]
is an isomorphism. However in both categories $\cA$ and $\cA_{\scT}$ we have
\[
\Ext^k(\dgr_s, \ngr_t \la n \ra) = \begin{cases}
\bk & \text{if $s=t$, $k=n=0$;} \\
0 & \text{otherwise,}
\end{cases}
\]
see e.g.~\cite[Equation~(A.1)]{modrap2}. Hence this claim is clear.

(2) It is clear that the quotient $\cA/\cA_{\scT}$ satisfies conditions~\eqref{it:qh-def-fin}, \eqref{it:qh-def-split}, and~\eqref{it:qh-def-ker} of Definition~\ref{defn:qhered}. To check that it satisfies condition~\eqref{it:qh-def-cover}, we denote by $\pi_{\scT} : \cA \to \cA_{\scT}$ the quotient morphism. Then one can easily check that if $s \in \scS \smallsetminus \scT$, for any $M$ in $\cA$ the morphisms
\begin{align*}
\Hom_{\cA}(\dgr_s, M) & \to \Hom_{\cA/\cA_{\scT}}(\pi_{\scT}(\dgr_s), \pi_{\scT}(M)), \\ 
\Hom_{\cA}(M, \ngr_s) & \to \Hom_{\cA/\cA_{\scT}}(\pi_{\scT}(M), \pi_{\scT}(\ngr_s))
\end{align*}
induced by $\pi_{\scT}$ are isomorphisms. Using~\cite[Corollaire~3 on p.~369]{gabriel}, one easily deduces that condition~\eqref{it:qh-def-cover} holds.

To prove condition~\eqref{it:qh-def-ext2}, we observe that, by~\cite[Corollaire~1 on p.~375]{gabriel}, the subcategory $\cA_{\scT}$ is localizing; by~\cite[Corollaire~2 on p.~375]{gabriel} we deduce that $\cA/\cA_{\scT}$ has enough injectives, and that every injective object is of the form $\pi_{\scT}(I)$ for some $I$ injective in $\cA$. In particular, since $\pi_{\scT}(\ngr_s)$ is either $0$ or a costandard object of $\cA/\cA_{\scT}$, we deduce that injective objects in $\cA/\cA_{\scT}$ admit costandard filtrations. By a standard argument (see e.g.~\cite[Corollary~3]{ringel}), this implies condition~\eqref{it:qh-def-ext2}.

(3) Observe that the objects $\{\dgr_s, s \in \scS\}$ form a graded exceptional set in $\Db(\cA)$ in the sense of~\cite[\S 2.1.5]{bezru2}. Hence, applying the general theory of these sequences developed in \cite{bezru, bezru2} we find that $\iota_\scT$ and the quotient functor $' \Pi_\scT : \Db(\cA) \to \Db(\cA) / \Db(\cA_{\scT})$ admit left and right adjoints, which induce functorial triangles as in the lemma. If we denote by $' \Pi_\scT^L$ (resp.~$' \Pi_\scT^R$) the left (resp.~right) adjoint to $' \Pi_\scT$, it is easily checked that we have 
\[
(' \Pi_\scT^L) \circ (\Pi_{\scT})(\dgr_s) \cong \dgr_s \quad \text{and} \quad (' \Pi_\scT^R) \circ (\Pi_{\scT})(\ngr_s) \cong \ngr_s
\]
for any $s \in \scS \smallsetminus \scT$ (see e.g.~\cite[Lemma~4(d)]{bezru} for a similar claim). Using this property and an argument similar to the one used to prove that $\iota_\scT$ is fully faithful, one can deduce that the natural functor $\Db(\cA) / \Db(\cA_{\scT}) \to \Db(\cA/\cA_{\scT})$ is an equivalence, which finishes the proof.
\end{proof}

\subsection{Positively graded quasihereditary categories}
\label{ss:nonneg}

In this section we will mainly consider graded quasihereditary categories which exhibit some positivity properties. The precise definition is as follows.

\begin{defn}
\label{defn:qh-pos}
Let $\cA$ be a graded quasihereditary category.  We say that $\cA$ is \emph{positively graded} if for all $s, t \in \scS$, we have $[ P^\gr_s : \Lgr_t\la n\ra ] = 0$ whenever $n > 0$.
\end{defn}

\begin{rmk}
\label{rmk:qh-pos-ring}
The condition in Definition~\ref{defn:qh-pos} is equivalent to requiring that we have $\Hom(P^\gr_t, P^\gr_s\la n\ra) = 0$ whenever $n < 0$.  In other words, if we let $P^\gr = \bigoplus_{s\in \scS} P^\gr_s$, then $\cA$ is positively graded if and only if the graded ring
\[
R:= \bigoplus_{n \in \Z} \Hom(P^\gr, P^\gr\la n\ra)
\]
is concentrated in nonnegative degrees. Note that $R$ is a finite dimensional $\bk$-algebra, and that the functor $M \mapsto \bigoplus_n \Hom_{\cA}(P^\gr, M \la n \ra)$ induces an equivalence of categories between $\cA$ and the category of finite dimensional graded right $R$-modules.
\end{rmk}

\begin{prop}\label{prop:pos-qher}
Let $\cA$ be a graded quasihereditary category.  The following conditions are equivalent:
\begin{enumerate}
\item $\cA$ is positively graded.\label{it:pos-qher}
\item We have $[\dgr_s : \Lgr_t\la n\ra] = (P^\gr_s : \dgr_t\la n\ra) = 0$ whenever $n > 0$.\label{it:pos-delta-proj}
\item We have $[\dgr_s : \Lgr_t\la n\ra] = [\ngr_s \la n \ra : \Lgr_t] = 0$ whenever $n > 0$.\label{it:pos-delta-nabla}
\item We have $\Ext^1(\Lgr_s, \Lgr_t\la n\ra) = 0$ for $n > 0$.\label{it:pos-ext}
\item Every object $M \in \cA$ admits a canonical filtration $W_\bullet M$ with the property that every composition factor of $\Gr^W_i M$ is of the form $\Lgr_s\la i\ra$, and every morphism in $\cA$ is strictly compatible with this filtration.\label{it:pos-wt-filt}
\end{enumerate}
\end{prop}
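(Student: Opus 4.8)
The plan is to prove the equivalences by establishing a cycle of implications, using the graded reciprocity formula~\eqref{eqn:reciprocity} as the bridge between the ``projective'' conditions and the ``standard/costandard'' conditions, and using the projective cover description from Definition~\ref{defn:qhered}\eqref{it:qh-def-cover} together with the adjunctions of Lemma~\ref{lem:qher-closed-subset} to handle $\mathrm{Ext}^1$-vanishing. Concretely, I would prove \eqref{it:pos-qher}$\Leftrightarrow$\eqref{it:pos-delta-proj}$\Rightarrow$\eqref{it:pos-delta-nabla}$\Rightarrow$\eqref{it:pos-ext}$\Rightarrow$\eqref{it:pos-wt-filt}$\Rightarrow$\eqref{it:pos-qher}, or a variant thereof. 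The implication \eqref{it:pos-delta-proj}$\Rightarrow$\eqref{it:pos-delta-nabla} is immediate from reciprocity: $(P^\gr_s:\dgr_t\la n\ra)=[\ngr_t\la n\ra:\Lgr_s]$, and rewriting indices one gets the claim about $[\ngr_s\la n\ra:\Lgr_t]$. For \eqref{it:pos-qher}$\Leftrightarrow$\eqref{it:pos-delta-proj}, one direction is trivial since $\dgr_t$ is a subquotient of $P^\gr_s$ (a standard-filtration subquotient), so $[P^\gr_s:\Lgr_t\la n\ra]=\sum_u(P^\gr_s:\dgr_u\la m\ra)[\dgr_u\la m\ra:\Lgr_t\la n\ra]$; for the converse one uses that $\dgr_s$ itself is a quotient of $P^\gr_s$ and that by condition~\eqref{it:qh-def-cover} applied to $\scT=\{s\}$-type closed sets one controls the ``top'' of standard filtrations.

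The heart of the argument is the chain \eqref{it:pos-delta-nabla}$\Rightarrow$\eqref{it:pos-ext}$\Rightarrow$\eqref{it:pos-wt-filt}. For \eqref{it:pos-delta-nabla}$\Rightarrow$\eqref{it:pos-ext}: an extension of $\Lgr_s$ by $\Lgr_t\la n\ra$ with $n>0$ — I would resolve $\Lgr_s$ using $\dgr_s$ (whose radical lies in $\cA_{<s}$) and dually resolve $\Lgr_t\la n\ra$ using $\ngr_t\la n\ra$, reducing to $\mathrm{Ext}^1(\dgr_s,\ngr_t\la n\ra)$, which vanishes for all $n$ by the computation $\Ext^k(\dgr_s,\ngr_t\la n\ra)=\bk$ if $s=t,k=n=0$ and $0$ otherwise (quoted in the proof of Lemma~\ref{lem:qher-closed-subset}), together with the positivity of multiplicities $[\dgr_s:\Lgr_u\la m\ra]$, $[\ngr_t\la n\ra:\Lgr_u\la m\ra]$ for $m>0$ resp. the corresponding vanishing — here one must be careful with the degree bookkeeping, using that in the long exact sequences the only contributions in degree $n>0$ come from composition factors of the radical of $\dgr_s$ and the corad of $\ngr_t$, which are in strictly lower-degree-shifted pieces by \eqref{it:pos-delta-nabla}. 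This is the step I expect to require the most care, essentially an induction on the partial order combined with an induction on $n$. For \eqref{it:pos-ext}$\Rightarrow$\eqref{it:pos-wt-filt}: given \eqref{it:pos-ext}, define $W_i M$ using the Loewy-type / socle-type filtration driven by the grading — concretely, let $W_{\le i}M$ be the largest subobject all of whose composition factors are of the form $\Lgr_s\la j\ra$ with $j\le i$; vanishing of $\Ext^1(\Lgr_s,\Lgr_t\la n\ra)$ for $n>0$ forces any such factor to split off in the appropriate direction, so that $\Gr^W_iM$ is semisimple, concentrated in ``degree $i$'', and functoriality/strictness follows because a morphism cannot raise the grading degree of a composition factor.

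Finally \eqref{it:pos-wt-filt}$\Rightarrow$\eqref{it:pos-qher} is essentially formal: applying the canonical filtration to $P^\gr_s$, since $\Lgr_s=\Lgr_s\la 0\ra$ must sit in $\Gr^W_0$ and $P^\gr_s$ is generated by its top, one checks $W_{\le 0}$ exhausts... more precisely, one shows $P^\gr_s$ has no composition factors $\Lgr_t\la n\ra$ with $n>0$ by observing that $P^\gr_s\twoheadrightarrow\Lgr_s$ factors through $P^\gr_s/W_{<0}$ or dually, using that $\Hom(P^\gr_s,\Lgr_t\la n\ra)=0$ for $n\ne 0$ combined with the filtration to build a surjection contradicting minimality of the projective cover if a positive-degree factor occurred. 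I would phrase this last step via Remark~\ref{rmk:qh-pos-ring}: the filtration gives that $R=\bigoplus\Hom(P^\gr,P^\gr\la n\ra)$ acts through its degree-$\ge 0$ part on each $M$, and since $P^\gr$ is itself such an $M$ and faithfully so, $R$ is concentrated in degrees $\ge 0$. The main obstacle throughout is the \eqref{it:pos-delta-nabla}$\Rightarrow$\eqref{it:pos-ext} step: controlling $\mathrm{Ext}^1$ between simples via the full complex of $\dgr$'s and $\ngr$'s requires a double induction (on $\le$ and on $n$) and careful tracking of which graded shifts can appear, and it is where one genuinely uses the quasihereditary axioms rather than just formal nonsense.
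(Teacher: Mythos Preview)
Your overall cycle of implications is sound and will work, but it diverges from the paper's route in one key place and you are overestimating the difficulty of your chosen step.

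The paper proves \eqref{it:pos-qher}$\Rightarrow$\eqref{it:pos-ext} directly, rather than going through \eqref{it:pos-delta-nabla}. This is a two-line argument: if $K = \ker(P^\gr_s \twoheadrightarrow \Lgr_s)$, then by \eqref{it:pos-qher} every composition factor of $K$ has the form $\Lgr_u\la m\ra$ with $m \le 0$, so $\Hom(K, \Lgr_t\la n\ra) = 0$ for $n > 0$; the long exact sequence then kills $\Ext^1(\Lgr_s, \Lgr_t\la n\ra)$ since $\Ext^1(P^\gr_s,-) = 0$. No induction, no costandard objects.

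Your route \eqref{it:pos-delta-nabla}$\Rightarrow$\eqref{it:pos-ext} also works, and more easily than you fear: resolving $\Lgr_s$ by $\dgr_s$ and $\Lgr_t\la n\ra$ by $\ngr_t\la n\ra$ does reduce to $\Ext^1(\dgr_s,\ngr_t\la n\ra)=0$, but the error terms $\Hom(\ker(\dgr_s\to\Lgr_s), \Lgr_t\la n\ra)$ and $\Hom(\dgr_s, \mathrm{coker}(\Lgr_t\la n\ra\to\ngr_t\la n\ra))$ vanish \emph{immediately} from the hypothesis \eqref{it:pos-delta-nabla} --- the first because the kernel has only factors $\Lgr_u\la m\ra$ with $m\le 0$ while $n>0$, the second because $\dgr_s$ has unique simple quotient $\Lgr_s = \Lgr_s\la 0\ra$ while the cokernel has only factors in degrees $\ge n > 0$. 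No double induction is required.

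For the remaining steps, the paper simply cites \cite[Lemme~5.3.6]{bbd} for \eqref{it:pos-ext}$\Rightarrow$\eqref{it:pos-wt-filt}, and for \eqref{it:pos-wt-filt}$\Rightarrow$\eqref{it:pos-qher} observes that if $n$ is the largest integer with $\Gr^W_n P^\gr_s \ne 0$, then $\Gr^W_n P^\gr_s$ is a quotient of $P^\gr_s$ and hence maps onto some $\Lgr_t\la n\ra$; but the unique simple quotient of $P^\gr_s$ is $\Lgr_s$, forcing $n=0$. This is cleaner than your ring-theoretic reformulation via Remark~\ref{rmk:qh-pos-ring}, which is correct in spirit but would need you to unpack how strict compatibility of morphisms with $W_\bullet$ translates into the degree bound on $R$. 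Also, in \eqref{it:pos-qher}$\Rightarrow$\eqref{it:pos-delta-proj}, the vanishing $(P^\gr_s:\dgr_t\la n\ra)=0$ for $n>0$ follows just from noting that $\Lgr_t\la n\ra$ is a composition factor of $\dgr_t\la n\ra$; your appeal to axiom~\eqref{it:qh-def-cover} is unnecessary.
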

\begin{proof}
\eqref{it:pos-qher}${}\Longrightarrow{}$\eqref{it:pos-delta-proj}.  Since $\dgr_s$ is a quotient of $P^\gr_s$, we clearly have $[\dgr_s : \Lgr_t\la n\ra ] = 0$ for $n > 0$.  If we had $(P^\gr_s : \dgr_t\la n\ra) \ne 0$ for some $s, t$ and some $n > 0$, then we would also have $(P^\gr_s : \Lgr_t\la n\ra ) \ne 0$, contradicting the assumption.

\eqref{it:pos-delta-proj}${}\Longrightarrow{}$\eqref{it:pos-qher}. This is obvious.

The equivalence \eqref{it:pos-delta-proj}${}\Longleftrightarrow{}$\eqref{it:pos-delta-nabla} follows from the reciprocity formula~\eqref{eqn:reciprocity}.

\eqref{it:pos-qher}${}\Longrightarrow{}$\eqref{it:pos-ext}.  Let $K$ be the kernel of $P^\gr_s \to \Lgr_s$.  Note that if $n > 0$, then $[K  : \Lgr_t\la n\ra] = 0$, and hence $\Hom(K, \Lgr_t\la n\ra) = 0$.  We deduce the desired result from the exact sequence
\[
\cdots \to \Hom(K,\Lgr_t\la n\ra) \to \Ext^1(\Lgr_s,\Lgr_t\la n\ra) \to \Ext^1(P^\gr_s, \Lgr_t\la n\ra) \to \cdots.
\]

\eqref{it:pos-ext}${}\Longrightarrow{}$\eqref{it:pos-wt-filt}.  This follows from~\cite[Lemme~5.3.6]{bbd} (see also the proof of~\cite[Th\'eor\`eme~5.3.5]{bbd}).

\eqref{it:pos-wt-filt}${}\Longrightarrow{}$\eqref{it:pos-qher}.  Consider the weight filtration $W_\bullet P^\gr_s$ of $P^\gr_s$.  Let $n$ be the largest integer such that $\Gr^W_n P^\gr_s \ne 0$.  Then $\Gr^W_n P^\gr_s$ is a quotient of $P^\gr_s$, and in particular, $P^\gr_s$ has a quotient of the form $\Lgr_t\la n\ra$.  But $\Lgr_s$ is the unique simple quotient of $P^\gr_s$, so we must have $n = 0$, and the result follows.
\end{proof}

Let us note the following consequence of Proposition~\ref{prop:pos-qher}, which is immediate from condition~\eqref{it:pos-delta-nabla} of the proposition.

\begin{cor}
\label{cor:quotient-pos}
If $\cA$ is a positively graded quasihereditary category and if $\scT \subset \scS$ is closed, then the graded quasihereditary category $\cA/\cA_{\scT}$ is positively graded.
\end{cor}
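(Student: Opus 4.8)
The plan is to deduce Corollary~\ref{cor:quotient-pos} directly from the characterization of positive gradedness in Proposition~\ref{prop:pos-qher}\eqref{it:pos-delta-nabla}, combined with the description of standard and costandard objects of the Serre quotient given in Lemma~\ref{lem:qher-closed-subset}(2). First I would recall that $\cA/\cA_{\scT}$ is indeed graded quasihereditary (Lemma~\ref{lem:qher-closed-subset}(2)), with index set $\scS \smallsetminus \scT$, and that its standard and costandard objects are precisely $\pi_{\scT}(\dgr_s)$ and $\pi_{\scT}(\ngr_s)$ for $s \in \scS \smallsetminus \scT$, where $\pi_{\scT} \colon \cA \to \cA/\cA_{\scT}$ is the quotient functor.

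The key point is then the behavior of composition multiplicities under $\pi_{\scT}$. Since $\pi_{\scT}$ is exact and kills exactly the objects of $\cA_{\scT}$, for any $M \in \cA$ and any $s \in \scS \smallsetminus \scT$ one has $[\pi_{\scT}(M) : \pi_{\scT}(\Lgr_s) \la n \ra] = [M : \Lgr_s \la n \ra]$; the simple objects $\pi_{\scT}(\Lgr_s)$ for $s \in \scS \smallsetminus \scT$ exhaust the irreducibles of the quotient, and the simples $\Lgr_t \la n \ra$ with $t \in \scT$ contribute nothing after applying $\pi_{\scT}$. Applying this with $M = \dgr_s$ and with $M = \ngr_s$, I obtain, for all $s, t \in \scS \smallsetminus \scT$ and all $n \in \Z$,
\[
[\pi_{\scT}(\dgr_s) : \pi_{\scT}(\Lgr_t) \la n \ra] \leq [\dgr_s : \Lgr_t \la n \ra], \qquad [\pi_{\scT}(\ngr_s) \la n \ra : \pi_{\scT}(\Lgr_t)] \leq [\ngr_s \la n \ra : \Lgr_t],
\]
indeed with equality, but the inequality already suffices. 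Since $\cA$ is positively graded, the right-hand sides vanish for $n > 0$ by Proposition~\ref{prop:pos-qher}\eqref{it:pos-delta-nabla}, hence so do the left-hand sides. As the left-hand sides are precisely the multiplicities $[\dgr^{\gr}_s : \Lgr_t \la n\ra]$ and $[\ngr^{\gr}_s \la n \ra : \Lgr_t]$ computed in $\cA/\cA_{\scT}$ for its standard and costandard objects, condition~\eqref{it:pos-delta-nabla} of Proposition~\ref{prop:pos-qher} holds for $\cA/\cA_{\scT}$, and therefore $\cA/\cA_{\scT}$ is positively graded.

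There is essentially no obstacle here: the statement is genuinely a one-line consequence once the two ingredients are in place. The only point requiring a modicum of care is the identification of the simple objects of $\cA/\cA_{\scT}$ and the exactness/faithfulness properties of $\pi_{\scT}$ that make the multiplicity comparison work --- but these are standard facts about Serre quotients (and are implicitly used already in the proof of Lemma~\ref{lem:qher-closed-subset}(2)), so I would simply cite them rather than reprove them. Accordingly I expect the proof in the paper to be just the sentence ``immediate from condition~\eqref{it:pos-delta-nabla}'' that precedes the corollary, and the above is the unpacking of that remark.
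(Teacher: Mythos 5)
Your proposal is correct and matches the paper's (implicit) argument exactly: the paper gives no separate proof, stating only that the corollary is immediate from condition~\eqref{it:pos-delta-nabla} of Proposition~\ref{prop:pos-qher} together with the description of standard and costandard objects in the quotient from Lemma~\ref{lem:qher-closed-subset}(2). Your unpacking of the multiplicity comparison under $\pi_{\scT}$ is precisely the intended reasoning.
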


It is easy to see 
that in a positively graded quasihereditary category, any $\Lgr_s$ admits a projective resolution whose terms are direct sums of various $P^\gr_t\la n\ra$ with $n \le 0$.  As a consequence, for all $k \ge 0$ we have
\begin{equation}\label{eqn:pos-higher-ext}
\Ext^k(\Lgr_s, \Lgr_t\la n\ra) = 0 \qquad\text{for $n > 0$.}
\end{equation}

\begin{prop}[cf.~{\cite[Proposition~3.1(a)]{ps}}]\label{prop:parshall-scott}
Let $\cA$ be a positively graded quasihereditary category, and let $\cA^\circ$ be the Serre subcategory generated by the simple objects $\{ \Lgr_s \mid s \in \scS \}$ (i.e., without Tate twists).  Then $\cA^\circ$ is a quasihereditary category (with weight poset $\scS$), with standard and costandard objects given respectively by
\[
\Delta^\circ_s := \Gr^W_0\dgr_s
\qquad\text{and}\qquad
\nabla^\circ_s := \Gr^W_0\ngr_s.
\]
\end{prop}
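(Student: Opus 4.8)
The plan is to exhibit $\cA^\circ$ as a quotient-and-restriction of $\cA$ built out of the weight-$0$ part, and transport the quasihereditary structure. First I would observe that, because $\cA$ is positively graded, Proposition~\ref{prop:pos-qher}\eqref{it:pos-wt-filt} supplies every object with a functorial weight filtration $W_\bullet$; the key consequence is that $\cA^\circ$ is precisely the full subcategory of objects $M$ with $W_0 M = M$ and $W_{-1} M = 0$ (i.e.\ concentrated in weight $0$), and that for $M, N \in \cA^\circ$ strict compatibility forces $\Hom_{\cA^\circ}(M,N) = \Hom_\cA(M,N)$. Moreover the functor $\Gr^W_0$ is exact on $\cA$ by strictness, which is what makes the candidate standard/costandard objects behave well.

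Next I would verify the (ungraded) analogues of the five conditions of Definition~\ref{defn:qhered} for $\cA^\circ$ with poset $\scS$, representatives $\Lgr_s$, and the maps $\Delta^\circ_s = \Gr^W_0 \dgr_s \to \Lgr_s$ and $\Lgr_s \to \Gr^W_0 \ngr_s = \nabla^\circ_s$ obtained by applying $\Gr^W_0$ to the structure maps of $\cA$ (using that $\Lgr_s$ is already in weight $0$, so $\Gr^W_0 \Lgr_s = \Lgr_s$). Conditions~\eqref{it:qh-def-fin} and~\eqref{it:qh-def-split} are inherited immediately. For~\eqref{it:qh-def-ker}: the kernel of $\dgr_s \to \Lgr_s$ lies in $\cA_{<s}$, and applying the exact functor $\Gr^W_0$ shows the kernel of $\Delta^\circ_s \to \Lgr_s$ lies in $\cA^\circ_{<s}$; here one must also check $\Delta^\circ_s$ has simple head $\Lgr_s$, which follows since $\dgr_s$ does and $\Gr^W_0$ is a quotient of the top weight piece. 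The costandard statement is dual. For~\eqref{it:qh-def-cover}, given a closed $\scT \subseteq \scS$ with maximal $t \in \scT$: I would identify $\cA^\circ_\scT$ with the weight-$0$ subcategory of $\cA_\scT$ and use that $\dgr_t \to \Lgr_t$ is a projective cover in $\cA_\scT$ together with $\Ext^1_\cA(\dgr_t, \Lgr_t' \la n \ra) = 0$ vanishing for $n > 0$ by~\eqref{eqn:pos-higher-ext} — plus the analogous $\Ext$-vanishing packaged in~\cite[Equation~(A.1)]{modrap2} — to conclude that $\Hom$-spaces out of $\Delta^\circ_t$ in $\cA^\circ_\scT$ match those out of $\dgr_t$ in $\cA_\scT$, giving projectivity of $\Delta^\circ_t$ and the covering property. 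For~\eqref{it:qh-def-ext2}, I would compare $\Ext^2_{\cA^\circ}(\Delta^\circ_s, \nabla^\circ_t)$ with $\Ext^2_\cA(\dgr_s, \ngr_t \la n \ra)$ summed over the relevant $n$: positivity together with the vanishing $\Ext^k_\cA(\dgr_s, \ngr_t \la n \ra) = 0$ for $n \ne 0$ (from~\cite[Equation~(A.1)]{modrap2}) concentrates everything in weight $0$, and the weight-$0$ part computes $\Ext$ in $\cA^\circ$ by the strictness of the weight filtration on $\Db$-level maps; since the $\cA$-side $\Ext^2$ vanishes, so does the $\cA^\circ$-side.

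The main obstacle I anticipate is the $\Ext$-comparison: showing that, for objects concentrated in weight $0$, $\Ext$-groups computed in $\cA^\circ$ agree with (the weight-$0$ part of) $\Ext$-groups computed in $\cA$. A priori $\cA^\circ$ is not a Serre subcategory closed under extensions inside $\cA$ in a way that would make this automatic — extensions of weight-$0$ objects in $\cA$ can acquire nonzero $W_{-1}$ or $W_1$. The resolution is exactly the positivity input: $\Ext^1_\cA(\Lgr_s, \Lgr_t \la n \ra) = 0$ for $n > 0$, combined with the dual statement for $n < 0$ which follows from~\eqref{it:pos-delta-nabla} and Serre duality of the quasihereditary structure (or directly from the weight filtration), forces any self-extension of weight-$0$ objects to again be weight-$0$. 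I would phrase this cleanly by noting that the weight filtration makes $\cA$ equivalent to graded modules over the nonnegatively graded ring $R$ of Remark~\ref{rmk:qh-pos-ring}, under which $\cA^\circ$ becomes modules over $R_0 = R/R_{>0}$; then the $\Ext$-comparison is the standard fact relating $\Ext$ over a graded ring and over its degree-$0$ quotient when the augmentation ideal is generated in positive degree, and the quasihereditary structure on $\cA^\circ$ is read off from that on $R$-$\gmod$ as in~\cite[Proposition~3.1(a)]{ps}. Once this translation is in place the remaining verifications are routine, so I would present the ring-theoretic reformulation first and then run the argument on the module side.
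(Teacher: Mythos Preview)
Your overall architecture is reasonable and tracks the paper's approach for axioms~\eqref{it:qh-def-fin}--\eqref{it:qh-def-cover}, but there are two genuine problems.

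First, a false claim. You assert that $\Ext^1_\cA(\Lgr_s, \Lgr_t\la n\ra) = 0$ for $n < 0$ follows ``from Serre duality of the quasihereditary structure (or directly from the weight filtration).'' This is not true in general: in a Koszul category, for instance, $\Ext^1$ between simples is concentrated precisely in degree $n = -1$. Positivity gives vanishing only for $n > 0$. Relatedly, your stated worry that ``extensions of weight-$0$ objects in $\cA$ can acquire nonzero $W_{-1}$ or $W_1$'' is misplaced: any extension of objects whose composition factors are all untwisted $\Lgr_u$'s again has only such composition factors, so $\cA^\circ$ \emph{is} a genuine Serre subcategory closed under extensions. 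Your ``resolution'' of a non-problem thus introduces an error. (This does mean, happily, that $\Ext^1_{\cA^\circ} \hookrightarrow \Ext^1_\cA$ automatically, which is what you need for axiom~\eqref{it:qh-def-cover}.)

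Second, and more seriously, your verification of axiom~\eqref{it:qh-def-ext2} is incomplete. You propose comparing $\Ext^2_{\cA^\circ}(\Delta^\circ_s, \nabla^\circ_t)$ directly with $\Ext^2_\cA(\dgr_s, \ngr_t\la n\ra)$, but $\Delta^\circ_s \ne \dgr_s$, so no such direct comparison exists. The paper instead first shows $\Ext^2_\cA(\Delta^\circ_s, \nabla^\circ_t) = 0$ via two applications of the long exact sequence: once using $0 \to W_{-1}\dgr_s \to \dgr_s \to \Delta^\circ_s \to 0$ (together with Proposition~\ref{prop:pos-qher}\eqref{it:pos-ext} to kill the boundary term), and once using $0 \to \nabla^\circ_t \to \ngr_t \to \ngr_t/W_0\ngr_t \to 0$. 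Only then does one invoke the injectivity of $\Ext^2_{\cA^\circ} \to \Ext^2_\cA$ (via~\cite[Lemma~3.2.3]{bgs}). Your ring-theoretic reformulation at the end could in principle substitute for this, but you do not carry it out, and the ``standard fact'' you cite about $\Ext$ over $R$ versus $R_0$ is not standard without further hypotheses---it is essentially the content of what needs to be proved.
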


\begin{proof}
It is clear that $\cA^0$ is a finite length category, and that its simple objects are parametrized by $\scS$. It is also clear from the definitions that $\Delta^\circ_s$ is a quotient of $\dgr_s$, and that the surjection $\dgr_s \to \Lgr_s$ factors through a surjection $\Delta^\circ_s \to \Lgr_s$. Similarly, $\nabla^\circ_s$ is a subobject of $\ngr_s$, and the injection $\Lgr_s \to \ngr_s$ factors through an injection $\Lgr_s \to \nabla^\circ_s$.
The ungraded analogues of axioms~\eqref{it:qh-def-fin}, \eqref{it:qh-def-split} and~\eqref{it:qh-def-ker} of Definition~\ref{defn:qhered} are clear.

We now turn to axiom~\eqref{it:qh-def-cover}.
Since $\Delta^\circ_s$ is a quotient of $\dgr_s$, it has a unique simple quotient, isomorphic to $\Lgr_s$.  Next, let $\scT \subset \scS$ be closed, with $s$ maximal in $\scT$. For $t \in \scT$, consider the exact sequence
\[
\cdots \to \Hom_{\cA_{\scT}}(W_{-1}\dgr_s,\Lgr_t) \to \Ext^1_{\cA_{\scT}}(\Delta^\circ_s, \Lgr_t) \to \Ext^1_{\cA_{\scT}}(\dgr_s, \Lgr_t) \to \cdots.
\]
The first term vanishes because $W_{-1}\dgr_s$ has only composition factors of the form $\Lgr_t \la n \ra$ with $n<0$, and the last term vanishes by axiom~\eqref{it:qh-def-cover} for $\cA$. So the middle term does as well. It clear that $\Ext^1_{\cA^\circ_{\scT}}(\Delta^\circ_s, \Lgr_t)=\Ext^1_{\cA_{\scT}}(\Delta^\circ_s, \Lgr_t)$, so we have shown that $\Delta^\circ_s$ is a projective cover of $\Lgr_s$ in $\cA^\circ_{\scT}$.

A similar argument shows that $\nabla^\circ_s$ is an injective envelope of $\Lgr_s$ in $\cA^\circ_{\scT}$; we omit further details.

Finally, we consider the analogue of axiom~\eqref{it:qh-def-ext2}. Consider the exact sequence
\[
\cdots \to \Ext^1_\cA(W_{-1}\dgr_s, \ngr_t) \to \Ext^2_\cA(\Delta^\circ_s, \ngr_t) \to \Ext^2_\cA(\dgr_s, \ngr_t) \to \cdots.
\]
The first term vanishes by Proposition~\ref{prop:pos-qher}\eqref{it:pos-ext}, and the last by axiom~\eqref{it:qh-def-ext2} for $\cA$, so the middle term does as well.  That term is also the last term in the exact sequence
\[
\cdots \to \Ext^1_\cA(\Delta^\circ_s, \ngr_t / W_0 \ngr_t) \to \Ext^2_\cA(\Delta^\circ_s, \nabla^\circ_t) \to \Ext^2_\cA(\Delta^\circ_s, \ngr_t) \to \cdots,
\]
whose first term again vanishes by Proposition~\ref{prop:pos-qher}\eqref{it:pos-ext}.  We have now shown that $\Ext^2_\cA(\Delta^\circ_s, \nabla^\circ_t) = 0$.  By a standard argument (see e.g.~\cite[Lemma~3.2.3]{bgs}), the natural map $\Ext^2_{\cA^\circ}(\Delta^\circ_s, \nabla^\circ_t) \to \Ext^2_\cA(\Delta^\circ_s, \nabla^\circ_t)$ is injective, so the former vanishes as well, as desired.
\end{proof}

\begin{rmk}
With the notation of Remark~\ref{rmk:qh-pos-ring}, if $\cA$ is a positively graded quasihereditary category, then the category $\cA^\circ$ identifies with the subcategory of the category of finite-dimensional graded right $R$-modules consisting of modules concentrated in degree $0$; in other words, with the category of finite-dimensional right modules over the $0$-th part $R^0$ of $R$.
\end{rmk}

The determination of $\Ext^2_\cA(\Delta^\circ_s,\nabla^\circ_t)$ at the end of the preceding proof can easily be adapted to higher $\Ext$-groups: by using~\eqref{eqn:pos-higher-ext} in place of Proposition~\ref{prop:pos-qher}\eqref{it:pos-ext}, and~\cite[Eq.~(A.1)]{modrap2} in place of axiom~\eqref{it:qh-def-ext2} for $\cA$, we find that
\begin{equation}\label{eqn:qher-pos-ff}
\Ext^k_\cA(\Delta^\circ_s, \nabla^\circ_t) = 0 \qquad\text{for all $k \ge 1$.}
\end{equation}
As in Lemma~\ref{lem:qher-closed-subset}, this implies the following fact.

\begin{lem}\label{lem:qher-pos-ff}
Let $\cA$ be a positively graded quasihereditary category.  The natural functor $\Db\cA^\circ \to \Db\cA$ is fully faithful.
\end{lem}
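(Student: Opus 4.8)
The plan is to imitate the full-faithfulness argument from the proof of Lemma~\ref{lem:qher-closed-subset}(1). First, by Proposition~\ref{prop:parshall-scott} the category $\cA^\circ$ is quasihereditary, with standard objects $\Delta^\circ_s$ and costandard objects $\nabla^\circ_t$; as in the graded case it then follows from the definitions that $\Db\cA^\circ$ is generated, as a triangulated category, both by $\{\Delta^\circ_s : s \in \scS\}$ and by $\{\nabla^\circ_t : t \in \scS\}$ (each $\Lgr_s$ lies in the triangulated subcategory generated by the standard, resp.\ costandard, objects, by induction on the poset $\scS$---using that the kernel of $\Delta^\circ_s \to \Lgr_s$, resp.\ the cokernel of $\Lgr_s \to \nabla^\circ_s$, has composition factors only among the $\Lgr_t$ with $t < s$---and these simple objects generate $\Db\cA^\circ$ since $\cA^\circ$ has finite length).

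Given this, a standard argument (exactly as for $\iota_\scT$ in the proof of Lemma~\ref{lem:qher-closed-subset}(1)) reduces the assertion to showing that, for all $s, t \in \scS$ and $k \in \Z$, the natural map
\[
\Ext^k_{\cA^\circ}(\Delta^\circ_s, \nabla^\circ_t) \longrightarrow \Ext^k_{\cA}(\Delta^\circ_s, \nabla^\circ_t)
\]
is an isomorphism. For $k < 0$ both sides vanish; for $k = 0$ the map is the identity of $\Hom_{\cA}(\Delta^\circ_s, \nabla^\circ_t)$, since $\cA^\circ$ is a full subcategory of $\cA$; and for $k \geq 1$ the left-hand side vanishes because $\cA^\circ$ is quasihereditary (the ungraded analogue of~\cite[Equation~(A.1)]{modrap2}), while the right-hand side vanishes by~\eqref{eqn:qher-pos-ff}. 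In every case the map is an isomorphism, which finishes the proof.

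I do not expect a genuine obstacle here: the substantive inputs---that $\cA^\circ$ is quasihereditary (hence the generation statements for $\Db\cA^\circ$) and the Ext-vanishing~\eqref{eqn:qher-pos-ff}---have already been established, so what remains is the same bookkeeping as in Lemma~\ref{lem:qher-closed-subset}. The only point deserving a moment's care is verifying that the comparison map is bijective in degree $0$ and that the vanishing ranges on the two sides of the arrow match up in degrees $\geq 1$, but both are immediate from the remarks above.
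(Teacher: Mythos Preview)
Your proposal is correct and is precisely the argument the paper has in mind: the paper's own proof is the one-line remark ``As in Lemma~\ref{lem:qher-closed-subset}, this implies the following fact,'' pointing to~\eqref{eqn:qher-pos-ff}, and you have simply spelled out that d\'evissage in detail. There is nothing to add.
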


\subsection{Metameric categories}
\label{ss:metameric}

We have seen above that any positively graded quasihereditary category contains two classes of objects worthy of being called ``standard'': the usual $\dgr_s$, and the new $\Delta^\circ_s$.  In this subsection, we study categories in which these two classes are closely related.

\begin{defn}\label{defn:metameric}
Let $\cA$ be a positively graded quasihereditary category. We say that $\cA$ is a \emph{metameric category} if for all $s \in \scS$ and all $i \in \Z$, the object $\bigl( \Gr^W_i \dgr_s \bigr) \la -i\ra \in \cA^\circ$ admits a standard filtration, and $\bigl( \Gr^W_i \ngr_s \bigr) \la -i \ra \in \cA^\circ$ admits a costandard filtration.
\end{defn}

This term is borrowed from biology, where \emph{metamerism} refers to a body plan containing repeated copies of some smaller structure.  The analogy is that in our setting, each $\Delta_s$ is made up of copies of the smaller objects $\Delta^\circ_u$.

\begin{thm}\label{thm:qher-wdelta}
Let $\cA$ be a metameric category.  
For any $s \in \scS$, there exists a unique object $\wDelta^\gr_s \in \cA$ which satisfies the following properties.
\begin{enumerate}
\item $\wDelta^\gr_s$ has a unique simple quotient, isomorphic to $\Lgr_s$.\label{it:qher-head}
\item For all $r \in \scS$ and $k \in \Z_{>0}$, we have $\Ext^k(\wDelta^\gr_s, \Lgr_r) = 0$ if $r \le s$.\label{it:qher-ext0}
\item For all $r \in \scS$ and $k \in \Z_{>0}$, we have $\Ext^k(\wDelta^\gr_s, \Lgr_r\la n\ra) = 0$ if $n \ne 0$.\label{it:qher-ext-neg}
\item There is a surjective map $\wDelta^\gr_s \to \dgr_s$ whose kernel admits a filtration whose subquotients are various $\dgr_u\la n\ra$ with $u > s$ and $n < 0$.\label{it:qher-filt}
\end{enumerate}
Dually, for any $s \in \scS$, there exists a unique object $\wnabla^\gr_s \in \cA$ which satisfies the following properties.
\begin{enumerate}
\item[(1$'$)] $\wnabla^\gr_s$ has a unique simple subobject, isomorphic to $\Lgr_s$.
\item[(2$'$)] For all $r \in \scS$ and $k \in \Z_{>0}$, we have $\Ext^k(\Lgr_r, \wnabla^\gr_s) = 0$ if $r \le s$.
\item[(3$'$)] For all $r \in \scS$ and $k \in \Z_{>0}$, we have $\Ext^k(\Lgr_r\la n\ra,\wnabla^\gr_s) = 0$ if $n \ne 0$.
\item[(4$'$)] There is an injective map $\ngr_s \to \wnabla^\gr_s$ whose cokernel admits a filtration whose subquotients are various $\ngr_u\la n\ra$ with $u > s$ and $n > 0$.
\end{enumerate}
Conversely, if $\cA$ is a positively graded quasihereditary category which contains objects $\wDelta^\gr_s$ and $\wnabla^\gr_s$ satisfying the above properties for all $s \in \scS$, then $\cA$ is metameric.
\end{thm}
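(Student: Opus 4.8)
The plan is to prove Theorem~\ref{thm:qher-wdelta} in three stages: first the existence of $\wDelta^\gr_s$, then its uniqueness (via the characterizing properties), and finally the converse. The dual statements about $\wnabla^\gr_s$ will follow by applying the arguments in the opposite category, so I would only sketch the $\wDelta^\gr_s$ side in detail.

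\emph{Existence.} Fix $s$ and consider the closed subset $\scT = \{t \in \scS \mid t \not\geq s\}$, so that $s$ is minimal in $\scS \smallsetminus \scT$. Let $Q$ be the projective cover of $\Delta^\circ_s$ inside the quotient category $(\cA/\cA_\scT)^\circ$ (equivalently, by Corollary~\ref{cor:quotient-pos} and Proposition~\ref{prop:parshall-scott}, the projective cover of $\Lgr_s$ in $(\cA/\cA_\scT)$ truncated to degree $0$, which since $s$ is minimal is just $\Delta^\circ_s$ itself—I would check this so that the construction collapses to a clean object). More to the point: I would build $\wDelta^\gr_s$ as the object of $\cA$ corresponding, under the description of $\cA/\cA_\scT$ via Lemma~\ref{lem:qher-closed-subset}, to a standard filtration. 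Concretely, take $\wDelta^\gr_s := \Pi_\scT^L \Pi_\scT(\dgr_s)$, using the adjoint from Lemma~\ref{lem:qher-closed-subset}(3). The functorial triangle $\Pi_\scT^L \Pi_\scT(\dgr_s) \to \dgr_s \to \iota_\scT \iota_\scT^L(\dgr_s) \xrightarrow{[1]}$ together with $\Pi_\scT^L\Pi_\scT(\dgr_s) \cong \dgr_s$ in the quotient by~\eqref{eqn:adjoints-pi-d-n} should show that the cone term $\iota_\scT\iota_\scT^L(\dgr_s)$ has cohomology supported on $\cA_\scT$; one then verifies using the metameric hypothesis (applied to the $\Gr^W_i\dgr_s$) that $\wDelta^\gr_s$ is concentrated in cohomological degree $0$ and has a filtration of the required type~\eqref{it:qher-filt}. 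Property~\eqref{it:qher-head} follows since $\dgr_s \twoheadrightarrow \Lgr_s$ and the kernel terms $\dgr_u\la n\ra$ with $u>s$ have heads $\Lgr_u\la n\ra \not\cong \Lgr_s$. Property~\eqref{it:qher-ext0}: for $r \leq s$, $\Lgr_r$ lies in $\cA_\scT$ (as $r \not\geq s$ unless $r=s$—here I need to separate the case $r=s$, handled by~\eqref{it:qher-head} and positivity), and $\Ext^k(\wDelta^\gr_s, \iota_\scT(-)) $ can be rewritten via adjunction as $\Ext^k$ in the quotient against an object that is standard-filtered, which vanishes by the quasihereditary $\Ext$-vanishing. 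Property~\eqref{it:qher-ext-neg}: here I would use~\eqref{eqn:pos-higher-ext} together with the fact that all composition factors of $\wDelta^\gr_s$ — read off from the filtration~\eqref{it:qher-filt} combined with the metameric description of $\dgr_s$ — lie in nonpositive Tate twists, and a symmetric argument using that the $\Gr^W$-pieces of the relevant injectives lie in nonpositive twists for the other sign.

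\emph{Uniqueness.} Suppose $X$ satisfies \eqref{it:qher-head}--\eqref{it:qher-filt}. I would compare $X$ with $\wDelta^\gr_s$ by showing that any such object is the universal extension of $\dgr_s$ by $\cA_\scT$-objects satisfying the $\Ext$-vanishing in \eqref{it:qher-ext0}; concretely, from \eqref{it:qher-filt} applied to both $X$ and $\wDelta^\gr_s$ one gets triangles $K_X \to X \to \dgr_s$ and $K \to \wDelta^\gr_s \to \dgr_s$ with $K_X, K$ having $\dgr_u\la n\ra$-filtrations, $u>s$, $n<0$; properties \eqref{it:qher-ext0}–\eqref{it:qher-ext-neg} force $\Hom(X,\dgr_s) = \Hom(\wDelta^\gr_s,\dgr_s) = \bk$ and $\Ext^1(X, K) = \Ext^1(\wDelta^\gr_s,K)=0$ (using that $K$ is filtered by $\dgr_u\la n\ra$ with $u>s$, $n<0$, hence by $\Lgr_v\la m \ra$ with $m<0$, against which $\Ext^1$ from $X$ vanishes by \eqref{it:qher-ext-neg}), so the surjections $X \to \dgr_s$ and $\wDelta^\gr_s \to \dgr_s$ lift compatibly to an isomorphism. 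This is the standard universal-extension uniqueness argument for $\Delta$-like objects; I would cite \cite{ringel} or the analogous lemma for the mechanics.

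\emph{Converse.} Suppose $\cA$ is positively graded quasihereditary and contains objects $\wDelta^\gr_s$, $\wnabla^\gr_s$ with all the listed properties; I must show each $(\Gr^W_i\dgr_s)\la -i\ra$ is standard-filtered in $\cA^\circ$ and dually. From \eqref{it:qher-filt} and induction on the order (starting from the maximal $s$, where $\wDelta^\gr_s = \dgr_s$), the $\dgr_s$ are built from the $\wDelta^\gr_u$ in a way compatible with the weight filtration; combined with \eqref{it:qher-ext0}–\eqref{it:qher-ext-neg}, each $\wDelta^\gr_u$ lies in $\cA^\circ$ after the appropriate Tate twist is unwound on the pieces, and is standard-filtered there because $\Ext^1_{\cA^\circ}((\wDelta^\gr_u)\la?\ra, \nabla^\circ_r)=0$ for $r \not> u$ by \eqref{it:qher-ext0} and the strictness of the weight filtration (Proposition~\ref{prop:pos-qher}\eqref{it:pos-wt-filt}). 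Then passing to $\Gr^W_i$ of the $\dgr_s$-filtration and using that morphisms are strict for $W_\bullet$ yields the metameric conclusion.

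The main obstacle, I expect, is the existence half — specifically, verifying that $\wDelta^\gr_s := \Pi_\scT^L\Pi_\scT(\dgr_s)$ is concentrated in a single cohomological degree and that the cone term's filtration subquotients are exactly the $\dgr_u\la n\ra$ with $u>s$, $n<0$ (and not, say, spread over several degrees or involving wrong twists). This is where the metameric hypothesis must be brought to bear delicately: one needs to feed the standard/costandard filtrations of the $(\Gr^W_i\dgr_s)\la -i\ra$ through the adjunction triangle and keep precise track of Tate twists, using positivity (Proposition~\ref{prop:pos-qher}) at each step to rule out the wrong-sign contributions. The bookkeeping with the weight filtration interacting with the recollement triangles of Lemma~\ref{lem:qher-closed-subset} is the technical heart of the argument.
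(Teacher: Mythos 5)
There is a genuine gap in the existence half, and it is fatal to the construction as written. You propose $\wDelta^\gr_s := \Pi_\scT^L\Pi_\scT(\dgr_s)$ with $\scT = \{t \mid t \not\geq s\}$, but the very equation you cite, \eqref{eqn:adjoints-pi-d-n}, says $\Pi_\scT^L\circ\Pi_\scT(\dgr_s)\cong\dgr_s$ \emph{in $\Db(\cA)$} (not merely in the quotient), so your candidate object is just $\dgr_s$ and the cone term $\iota_\scT\iota_\scT^L(\dgr_s)$ vanishes. The object $\dgr_s$ does satisfy \eqref{it:qher-head}, \eqref{it:qher-ext0} and (trivially) \eqref{it:qher-filt}, but it fails \eqref{it:qher-ext-neg} for $n<0$ whenever some $\Ext^1(\dgr_s,\Lgr_t\la n\ra)\neq 0$ with $n<0$, which is exactly the generic situation the theorem is designed to handle. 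There is also a directional error in your choice of $\scT$: the kernel of $\wDelta^\gr_s\to\dgr_s$ is filtered by $\dgr_u\la n\ra$ with $u>s$, so it is built from simples \emph{above} $s$ in the order and negatively twisted; these do not live in $\cA_\scT$, and indeed $\wDelta^\gr_s$ is the projective cover of $\Lgr_s$ in the Serre subcategory generated by $\{\Lgr_r\mid r\le s\}\cup\{\Lgr_t\la n\ra\mid n\neq 0\}$, which is not of the form $\cA_{\scT'}$ for any closed $\scT'$ (it is not stable under Tate twist), so no recollement built from Lemma~\ref{lem:qher-closed-subset} can produce it directly.

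The correct construction is a universal extension, built by induction on a linear refinement of the order: setting $\cA'=\cA_{\le N-1}$ with inductively constructed $\wDelta^{\gr\prime}_i$, one defines $\wDelta^\gr_i$ as the extension of $\wDelta^{\gr\prime}_i$ by $\bigoplus_{n<0}\Ext^1(\wDelta^{\gr\prime}_i,\dgr_N\la n\ra)^*\otimes\dgr_N\la n\ra$ classified by the canonical class, which kills the offending $\Ext^1$ groups by construction. The remaining delicate point — entirely absent from your sketch — is proving $\Ext^k(\wDelta^\gr_i,\Lgr_N\la m\ra)=0$ for $k\ge 2$ and $m<0$; this is precisely where the metameric hypothesis enters, applied not to $\Gr^W_i\dgr_s$ but to the cokernel $M$ of $\Lgr_N\hookrightarrow\ngr_N$: one reduces via the weight filtration and~\eqref{eqn:pos-higher-ext} to $\Ext^k(\Delta^\circ_i,(\Gr^W_{-m}\ngr_N)\la m\ra)$, and the latter vanishes by~\eqref{eqn:qher-pos-ff} because $(\Gr^W_{-m}\ngr_N)\la m\ra$ is costandard-filtered in $\cA^\circ$. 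Your uniqueness argument and your sketch of the converse are broadly in the right spirit (the converse is cleanest as a two-way computation of $\Ext^1(\wDelta^\gr_s,\ngr_t\la m\ra)$, identified with $\Ext^1_{\cA^\circ}(\Delta^\circ_s,(\Gr^W_{-m}\ngr_t)\la m\ra)$), but without a valid existence construction the proof does not stand.
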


\begin{proof}
Suppose that $\cA$ is metameric.  We first remark that, if $\wDelta^\gr_s$ exists, then it is the projective cover of $\Lgr_s$ in the Serre subcategory of $\cA$ generated by the objects $\Lgr_r$ with $r \leq s$ and the objects $\Lgr_t \la n \ra$ for all $t \in \scS$ and $n \neq 0$. Hence uniqueness is clear. It also follows from this remark that the map in~\eqref{it:qher-filt} is the unique (up to scalar) nonzero morphism $\wDelta^\gr_s \to \dgr_s$. 

To prove existence,
we can assume without loss of generality that $\scS$ is the set $\{1,2, \ldots, N\}$ (with its natural order).  We proceed by induction on $N$.  Let $\cA':=\cA_{\leq N-1}$, and 
assume the theorem is known to hold for $\cA'$.  For each $i \le N-1$, let $\wDelta^{\gr\prime}_i$ be the object in $\cA'$ satisfying the properties of Theorem~\ref{thm:qher-wdelta} for $\cA'$.  

We begin by constructing the objects $\wDelta^\gr_i$.  For $i = N$, we simply set
\[
\wDelta^\gr_N := \dgr_N.
\]
This object clearly has properties~\eqref{it:qher-head}--\eqref{it:qher-filt}. Now suppose $i < N$.  For $n < 0$, let $E_n := \Ext^1(\wDelta^{\gr\prime}_i, \dgr_N\la n\ra)$.  Let $\epsilon_n$ be the canonical element of $E_n^* \otimes E_n \cong \Ext^1(\wDelta^{\gr\prime}_i, E_n^* \otimes \dgr_N\la n\ra)$, and let
\[
\epsilon := \bigoplus_{n < 0} \epsilon_n \in \Ext^1\left( \wDelta^{\gr\prime}_i, \bigoplus_{n<0} E_n^* \otimes \dgr_N\la n\ra\right).
\]
(Note that only finitely many of the spaces $E_n$ are nonzero, so these direct sums are finite.)  Define $\wDelta^\gr_i$ to be the middle term of the corresponding short exact sequence:
\begin{equation}\label{eqn:wdelta-defn}
0 \to \bigoplus_{n < 0} E_n^* \otimes \dgr_N\la n\ra \to \wDelta^\gr_i \to \wDelta^{\gr\prime}_i \to 0.
\end{equation}
Then, for any $m < 0$, the natural map
\begin{equation}\label{eqn:qher-can}
\Hom \bigl( \bigoplus_{n < 0} E_n^* \otimes \dgr_N\la n\ra, \dgr_N\la m\ra \bigr) \to \Ext^1(\wDelta^{\gr\prime}_i, \dgr_N\la m\ra)
\end{equation}
is an isomorphism.  For brevity, we henceforth write $C := \bigoplus_{n < 0} E_n^* \otimes \dgr_N\la n\ra$.

Suppose $j \le N-1$.  Then $\Ext^k(C, \Lgr_j\la m\ra) = 0$ for all $k \ge 0$ and $m \in \Z$, so
\begin{equation}\label{eqn:qher-ext-iso}
\Ext^k(\wDelta^\gr_i, \Lgr_j\la m\ra) \cong \Ext^k(\wDelta^{\gr\prime}_i, \Lgr_j\la m\ra)
\qquad\text{for all $k \ge 0$, if $j \le N-1$.}
\end{equation}
In particular, we have
\begin{equation}
\label{eqn:qher-head1}
\dim \Hom(\wDelta^\gr_i, \Lgr_j\la m\ra) = 
\begin{cases}
1 & \text{if $j = i$ and $m = 0$,} \\
0 & \text{in all other cases with $j \le N-1$}
\end{cases}
\end{equation}
and, using induction and Lemma~\ref{lem:qher-closed-subset},
\begin{equation}
\label{eqn:qher-ext1}
\Ext^k(\wDelta^\gr_i, \Lgr_j\la m\ra) = 0 \quad\text{for $k \ge 1$, if
$\begin{cases}
\text{$j \le N-1$ and $m \ne 0$, or} \\
\text{$j \le i$ and $m = 0$.} 
\end{cases}$}
\end{equation}

Next, let $K$ be the kernel of the map $\dgr_N \to \Lgr_N$.  Note that if $m < 0$, then every composition factor of $K\la m\ra$ is isomorphic to some $\Lgr_j\la n\ra$ with $n < 0$ and $j \leq N-1$.  Assume $m < 0$, and consider the following long exact sequences:
{\small\[
\xymatrix@R=15pt@C=10pt{
\Hom(\wDelta^{\gr\prime}_i, K\la m\ra) \ar[r]\ar[d] &
  \Hom(\wDelta^{\gr\prime}_i, \dgr_N\la m\ra) \ar[r]\ar[d] &
  \Hom(\wDelta^{\gr\prime}_i, \Lgr_N\la m\ra) \ar[r]\ar[d] &
  \Ext^1(\wDelta^{\gr\prime}_i, K\la m\ra) \ar[d] \\
\Hom(\wDelta^\gr_i, K\la m\ra) \ar[r] &
  \Hom(\wDelta^\gr_i, \dgr_N\la m\ra) \ar[r] &
  \Hom(\wDelta^\gr_i, \Lgr_N\la m\ra) \ar[r] &
  \Ext^1(\wDelta^\gr_i, K\la m\ra).}
\]}%
Since $\Hom(C,K\la m\ra) = 0$, the first vertical map is an isomorphism. By~\eqref{eqn:qher-ext-iso} and~\eqref{eqn:qher-ext1}, both groups in the last column vanish.  It follows from~\eqref{eqn:qher-can} that the second vertical map is an isomorphism.  Therefore, by the five lemma, the third one is also an isomorphism, and we have $\Hom(\wDelta^\gr_i, \Lgr_N\la m\ra) = 0$ for $m < 0$.  In fact, we have
\begin{equation}\label{eqn:qher-head2}
\Hom(\wDelta^\gr_i, \Lgr_N\la m\ra) = 0 \qquad\text{for all $m \in \Z$.}
\end{equation}
For $m \ge 0$, this follows from~\eqref{eqn:wdelta-defn}, since $\Hom(C,\Lgr_N\la m\ra) = 0$ for $m \ge 0$.

Next, for $m < 0$, consider the exact sequence
\[
\cdots \to \Ext^1(\wDelta^\gr_i, \dgr_N\la m\ra) \to \Ext^1(\wDelta^\gr_i, \Lgr_N\la m\ra) \to \Ext^2(\wDelta^\gr_i, K\la m\ra) \to \cdots.
\]
The isomorphism~\eqref{eqn:qher-can} implies that the first term vanishes.  On the other hand, the last term vanishes by~\eqref{eqn:qher-ext1}.  We conclude that 
\begin{equation}\label{eqn:qher-ext2}
\Ext^1(\wDelta^\gr_i, \Lgr_N\la m\ra) = 0 \qquad \text{for $m < 0$.}
\end{equation}

Finally, let $M$ be the cokernel of $\Lgr_N \hookrightarrow \ngr_N$.  We will study $\Ext$-groups involving $M\la m\ra$ with $m < 0$.  Let $M' = W_{-m-1}M$ and $M'' = M/W_{-m-1}M$.  In other words, $M'\la m\ra = W_{-1}(M\la m\ra)$ and $M''\la m\ra = (M\la m\ra)/W_{-1}(M\la m\ra)$. All composition factors of $M'\la m\ra$ are of the form $\Lgr_j \la n\ra$ with $n < 0$ and $j \le N-1$, so by~\eqref{eqn:qher-ext1}, we have
\begin{equation}\label{eqn:qher-ext-ann1}
\Ext^k(\wDelta^\gr_i, M'\la m\ra) = 0 \qquad\text{for all $k \ge 1$.}
\end{equation}
We also have a short exact sequence $0 \to \Gr^W_{-m} M \to M'' \to M/W_{-m}M \to 0$.  It follows from~\eqref{eqn:pos-higher-ext} that $\Ext^k(\wDelta^\gr_i, (M/W_{-m}M)\la m\ra) = 0$ for all $k \ge 0$, so $\Ext^k(\wDelta^\gr_i, M''\la m\ra) \cong \Ext^k(\wDelta^\gr_i, \Gr^W_{-m}M\la m\ra)$.  Another invocation of~\eqref{eqn:pos-higher-ext} shows that $\Ext^k(\wDelta^\gr_i, \Gr^W_{-m}M\la m\ra) \cong \Ext^k(\Gr^W_0 \wDelta^\gr_i, \Gr^W_{-m}M\la m\ra)$.  Now, by construction, $\Gr^W_0 \wDelta^\gr_i \cong \Gr^W_0 \dgr_i = \Delta^\circ_i$.  On the other hand, since $-m > 0$, $\Gr^W_{-m} M\la m\ra \cong \Gr^W_{-m} \ngr_N\la m\ra$.  The latter object has a costandard filtration as an object of $\cA^\circ$, since $\cA$ is metameric by assumption.  By~\eqref{eqn:qher-pos-ff}, we have that $\Ext^k(\Delta^\circ_i, \Gr^W_{-m}M\la m\ra) = 0$ for $k \ge 1$. Unwinding the last few sentences, we find that $\Ext^k(\wDelta^\gr_i, M''\la m\ra) = 0$ for all $k \ge 1$.  Combining this with~\eqref{eqn:qher-ext-ann1} yields
\[
\Ext^k(\wDelta^\gr_i, M\la m\ra) = 0 \qquad\text{for all $k \ge 1$.}
\]
As a consequence, the natural map $\Ext^k(\wDelta^\gr_i, \Lgr_N) \to \Ext^k(\wDelta^\gr_i, \ngr_N)$  is an isomorphism for $k \ge 2$.  The latter group vanishes because $\wDelta^\gr_i$ has a standard filtration.  We conclude that
\begin{equation}\label{eqn:qher-ext3}
\Ext^k(\wDelta^\gr_i, \Lgr_N\la m\ra) = 0 \qquad \text{for $m<0$ and $k \ge 2$.}
\end{equation}

Both~\eqref{eqn:qher-ext2} and~\eqref{eqn:qher-ext3} were proved above for $m < 0$.  But they both hold for $m > 0$ as well: this follows immediately from~\eqref{eqn:pos-higher-ext} because every composition factor of $\wDelta^\gr_i$ is, by construction, of the form $\Lgr_u\la n\ra$ with $n \le 0$.

We have finished the study of $\wDelta^\gr_i$.  To summarize, property~\eqref{it:qher-filt} in the theorem holds by construction, and property~\eqref{it:qher-head} holds by~\eqref{eqn:qher-head1} and~\eqref{eqn:qher-head2}.  Property~\eqref{it:qher-ext0} is covered by~\eqref{eqn:qher-ext1}, and property~\eqref{it:qher-ext-neg} is obtained by combining~\eqref{eqn:qher-ext1}, \eqref{eqn:qher-ext2}, and~\eqref{eqn:qher-ext3}.

The construction of $\wnabla^\gr_i$ is similar and will be omitted.

We now turn to the last assertion in the theorem.  Assume that $\cA$ contains a family of objects $\{ \wDelta^\gr_s, s \in \scS \}$ satisfying properties~\eqref{it:qher-head}--\eqref{it:qher-filt}.  Let $s, t \in \scS$, and let $m < 0$.  A routine argument with weight filtrations, using~\eqref{eqn:pos-higher-ext} and property~\eqref{it:qher-ext-neg} (similar to the discussion following~\eqref{eqn:qher-ext-ann1}) shows that
\[
\Ext^1(\wDelta^\gr_s, \ngr_t\la m\ra) \cong \Ext^1 \bigl( \Gr^W_0 \wDelta^\gr_s, (\Gr^W_{-m} \ngr_t) \la m\ra \bigr).
\]
The left-hand side vanishes because $\wDelta^\gr_s$ has a standard filtration.  On the other hand, it follows from property~\eqref{it:qher-filt} that $\Gr^W_0 \wDelta^\gr_s \cong \Gr^W_0 \dgr_s \cong \Delta^\circ_s$. Therefore, $\Ext^1 \bigl( \Delta^\circ_s, (\Gr^W_{-m} \ngr_t) \la m\ra \bigr)=0$.  We have computed this $\Ext^1$-group in $\cA$, but its vanishing implies that
\[
\Ext^1_{\cA^\circ} \bigl( \Delta^\circ_s, (\Gr^W_{-m} \ngr_t)\la m\ra \bigr) = 0 \qquad\text{for all $s \in \scS$}
\]
as well.  By a standard argument (see e.g.~\cite[Proposition~A2.2(iii)]{donkin}), we conclude that $(\Gr^W_{-m} \ngr_t)\la m\ra$ has a costandard filtration for all $m < 0$. A dual argument shows that each $(\Gr^W_m \dgr_t)\la -m\ra$ has a standard filtration, so $\cA$ is metameric.
\end{proof}

\begin{rmk}
In a metameric category, the description of projectives from~\cite[Theorem~3.2.1]{bgs} or~\cite[Theorem~A.3]{modrap2} can be refined somewhat, as follows.  Let $\cA$ be a metameric category, and let $P^\gr_s$ and $P^\circ_s$ be projective covers of $\Lgr_s$ in $\cA$ and in $\cA^\circ$, respectively.  Then $P^\gr_s$ admits a filtration whose subquotients are various $\wDelta^\gr_t$ with $t \ge s$.  Moreover, we have
\[
(P^\gr_s : \wDelta^\gr_t) = (P^\circ_s : \Delta^\circ_t).
\]
The proof is a straightforward generalization of that of~\cite[Theorem~3.2.1]{bgs}. As this result will not be needed in this paper, we omit the details.
\end{rmk}

\subsection{Koszul and standard Koszul categories}
\label{ss:koszul}

Recall that a graded quasihereditary category is said to be \emph{Koszul} if it satisfies
\begin{equation}
\label{eqn:def-koszul}
\Ext^k(\Lgr_s, \Lgr_t\la n\ra) = 0 \qquad\text{unless $n = -k$.}
\end{equation}
(A Koszul category is automatically positively graded by Proposition~\ref{prop:pos-qher}.)  
It is said to be \emph{standard Koszul} if it satisfies
\begin{equation}
\label{eqn:def-standard-koszul}
\Ext^k(\Lgr_s, \ngr_t\la n\ra) = \Ext^k(\dgr_s, \Lgr_t\la n\ra) = 0
\qquad\text{unless $n = -k$.}
\end{equation}
(See~\cite{adl, mazorchuk} for this notion; see also~\cite{irving} for an earlier study of this condition.) 

The following well-known result follows from~\cite{adl}. Since the latter paper uses a vocabulary which is is quite different from ours, we include a proof.

\begin{prop}
\label{prop:standard-koszul}
Let $\cA$ be 
a graded quasihereditary category. If $\cA$ is standard Koszul, then it is Koszul.
\end{prop}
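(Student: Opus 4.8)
The plan is to prove~\eqref{eqn:def-koszul} by induction on $|\scS|$, the case $\scS = \emptyset$ being vacuous. For the inductive step, the idea is to peel off a \emph{minimal} element $s_0 \in \scS$. Since $s_0$ is minimal we have $\cA_{<s_0} = 0$, so Definition~\ref{defn:qhered} forces $\dgr_{s_0} \xrightarrow{\sim} \Lgr_{s_0} \xrightarrow{\sim} \ngr_{s_0}$; feeding $\dgr_{s_0} = \Lgr_{s_0}$ into the first half of~\eqref{eqn:def-standard-koszul} and $\ngr_{s_0} = \Lgr_{s_0}$ into the second half then yields at once
\[
\Ext^k(\Lgr_{s_0}, \Lgr_t\la n\ra) = \Ext^k(\Lgr_s, \Lgr_{s_0}\la n\ra) = 0 \qquad\text{unless } n = -k
\]
for \emph{all} $s, t \in \scS$ and all $k, n$. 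It therefore remains to prove the same vanishing for pairs $s, t \in \scS \smallsetminus \{s_0\}$.

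For those, I would set $\scT = \{s_0\}$, which is a closed subset since $s_0$ is minimal, and pass to the Serre quotient $\bar\cA := \cA/\cA_\scT$. By Lemma~\ref{lem:qher-closed-subset}(2) this is a graded quasihereditary category with poset $\scS \smallsetminus \{s_0\}$, with simple objects $\Pi_\scT(\Lgr_s)$ and standard, resp.\ costandard, objects $\Pi_\scT(\dgr_s)$, resp.\ $\Pi_\scT(\ngr_s)$. Using the adjunctions of Lemma~\ref{lem:qher-closed-subset}(3) together with the isomorphisms $\Pi_\scT^L\Pi_\scT(\dgr_s) \cong \dgr_s$ and $\Pi_\scT^R\Pi_\scT(\ngr_t) \cong \ngr_t$ of~\eqref{eqn:adjoints-pi-d-n}, one checks that
\[
\Ext^k_{\bar\cA}(\Pi_\scT\dgr_s, \Pi_\scT\Lgr_t\la n\ra) \cong \Ext^k_\cA(\dgr_s, \Lgr_t\la n\ra), \qquad \Ext^k_{\bar\cA}(\Pi_\scT\Lgr_s, \Pi_\scT\ngr_t\la n\ra) \cong \Ext^k_\cA(\Lgr_s, \ngr_t\la n\ra)
\]
for $s, t \ne s_0$. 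Hence $\bar\cA$ is again standard Koszul, so by the inductive hypothesis it is Koszul; in particular $\Ext^k_{\bar\cA}(\Pi_\scT\Lgr_s, \Pi_\scT\Lgr_t\la n\ra) = 0$ unless $n = -k$.

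To transfer this back to $\cA$, for $s, t \ne s_0$ I would apply $\Hom_{\Db\cA}(\Lgr_s, (-)\la n\ra[k])$ to the functorial triangle $\iota_\scT\iota_\scT^R\Lgr_t \to \Lgr_t \to \Pi_\scT^R\Pi_\scT\Lgr_t \xrightarrow{[1]}$ of Lemma~\ref{lem:qher-closed-subset}(3). The third term contributes $\Ext^k_{\bar\cA}(\Pi_\scT\Lgr_s, \Pi_\scT\Lgr_t\la n\ra)$ (via the adjunction), which vanishes unless $n = -k$ by the previous paragraph. For the first term, note that $\cA_\scT$ is semisimple (its simple objects $\Lgr_{s_0}\la n\ra$ have no higher self-extensions, by~\cite[Eq.~(A.1)]{modrap2}), so $\iota_\scT^R\Lgr_t \cong \bigoplus_q H^q(\iota_\scT^R\Lgr_t)[-q]$; and the adjunction $\iota_\scT \dashv \iota_\scT^R$ identifies the multiplicity of $\Lgr_{s_0}\la n\ra$ in $H^q(\iota_\scT^R\Lgr_t)$ with $\dim\Ext^q_\cA(\Lgr_{s_0}, \Lgr_t\la -n\ra)$, which is $0$ unless $n = q$ by the first paragraph. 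Thus $\iota_\scT\iota_\scT^R\Lgr_t$ is a direct sum of the shifts $\Lgr_{s_0}\la q\ra[-q]$ ($q \in \Z$, with finite multiplicities), and its $q$-summand contributes $\Ext^{k-q}_\cA(\Lgr_s, \Lgr_{s_0}\la q+n\ra)$, which by the first-paragraph vanishing is $0$ unless $q+n = -(k-q)$, i.e.\ unless $n = -k$. Hence $\Hom_{\Db\cA}(\Lgr_s, \iota_\scT\iota_\scT^R\Lgr_t\la n\ra[k]) = 0$ unless $n = -k$. The long exact sequence then forces $\Ext^k_\cA(\Lgr_s, \Lgr_t\la n\ra) = 0$ unless $n = -k$, which completes the induction.

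I expect the only real difficulty to be bookkeeping rather than conceptual: one must carefully track the Tate twist $\la 1\ra$ and the cohomological shift through the recollement functors $\iota_\scT$, $\Pi_\scT$ and their adjoints, and verify that the isomorphisms in~\eqref{eqn:adjoints-pi-d-n} are those induced by the relevant adjunction morphisms, so that they are compatible with the triangles of Lemma~\ref{lem:qher-closed-subset}(3). The conceptual content is slight: the point is that at a \emph{minimal} stratum one has $\dgr_{s_0} = \ngr_{s_0} = \Lgr_{s_0}$, so standard Koszulity directly controls every Ext-group with $s_0$ in either argument, after which the quotient category $\bar\cA$ carries the induction.
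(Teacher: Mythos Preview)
Your argument is correct and follows essentially the same approach as the paper's proof: induction on $|\scS|$, peeling off a minimal element $s_0$, showing the quotient $\cA/\cA_{\{s_0\}}$ is again standard Koszul via the adjunctions~\eqref{eqn:adjoints-pi-d-n}, and then using the recollement triangle together with the decomposition of $\iota_{\scT}^R \Lgr_t$ (from the semisimplicity of $\cA_{\{s_0\}}$) to transfer Koszulity back to $\cA$. The only cosmetic difference is that you isolate the vanishing of $\Ext^k(\Lgr_{s_0},\Lgr_t\la n\ra)$ and $\Ext^k(\Lgr_s,\Lgr_{s_0}\la n\ra)$ at the outset (using $\dgr_{s_0}=\Lgr_{s_0}=\ngr_{s_0}$), whereas the paper derives the analogous fact in the course of analyzing $\iota^L(\Lgr_u)$ and $\iota^R(\Lgr_t)$ simultaneously; and you compute $\Hom(\Lgr_s,\iota_\scT\iota_\scT^R\Lgr_t\la n\ra[k])$ directly rather than rewriting it via $\iota_\scT^L$.
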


\begin{proof}
We prove the result by induction on the cardinality of $\scS$. The claim is obvious if $\scS$ consists of only one element, since in this case $\cA$ is semisimple.

Now assume that $\scS$ has at least two elements, and that $\cA$ is standard Koszul. Let $s \in \scS$ be minimal, and set $\cA':=\cA_{\{s\}}$, $\cA'':=\cA/\cA_{\{s\}}$, $\iota:=\iota_{\{s\}}$, $\Pi:=\Pi_{\{s\}}$. By Lemma~\ref{lem:qher-closed-subset}, these categories are graded quasihereditary. We claim that $\cA''$ is standard Koszul. Indeed for $t,u \in \scS \smallsetminus \{s\}$ we have
\[
\Ext^k_{\cA''}(\Pi(\Lgr_t), \Pi(\ngr_u) \la n \ra) \cong \Ext^k_{\cA}(\Lgr_t, \Pi^R \circ \Pi(\ngr_u) \la n \ra) \cong \Ext^k_{\cA}(\Lgr_t, \ngr_u \la n \ra)
\]
by~\eqref{eqn:adjoints-pi-d-n}, and the right-hand side vanishes unless $n=-k$ by assumption. Similarly we have
\[
\Ext^k_{\cA''}(\Pi(\dgr_t), \Pi(\Lgr_u) \la n \ra) \cong \Ext^k_{\cA}(\dgr_t, \Lgr_u \la n \ra),
\]
and again the right-hand side vanishes unless $n=-k$.

By induction, we deduce that $\cA''$ is Koszul. Now let $t \in \scS$, and consider the distinguished triangle
\[
\iota \circ \iota^R (\Lgr_t) \to \Lgr_t \to \Pi^R \circ \Pi (\Lgr_t) \xrightarrow{[1]}
\]
of Lemma~\ref{lem:qher-closed-subset}.
Applying the functor $\Hom(\Lgr_u, -\la n \ra)$ (for some $u \in \scS$ and $n \in \Z$) we obtain a long exact sequence
\begin{multline*}
\cdots \to \Ext^k_{\cA'}(\iota^L(\Lgr_u), \iota^R(\Lgr_t) \la n \ra) \to \Ext^k_{\cA}(\Lgr_u, \Lgr_t \la n \ra) \\
\to \Ext^k_{\cA''}(\Pi(\Lgr_u), \Pi(\Lgr_t) \la n \ra) \to \cdots
\end{multline*}
Since $\cA''$ is Koszul, the third term vanishes unless $n=-k$. Hence to conclude it suffices to prove that the first term also vanishes unless $n=-k$.

We claim that $\iota^L(\Lgr_u)$ is a direct sum of objects of the form $\Lgr_s \la m \ra [-m]$ for some $m \in \Z$. Indeed, since $\cA'$ is semisimple, this object is a direct sum of objects of the form $\Lgr_s \la a \ra [b]$. But if such an object appears as a direct summand then
\[
\Hom_{\Db(\cA')}(\iota^L(\Lgr_u), \Lgr_s \la a \ra [b]) \cong \Ext_{\cA}^b(\Lgr_u, \Lgr_s \la a \ra) \neq 0,
\]
which implies that $a=-b$ since $\Lgr_s=\ngr_s$; this finishes the proof of the claim. Similar arguments show that $\iota^R(\Lgr_t)$ is also a direct sum of objects of the form $\Lgr_s \la m' \ra [-m']$ for $m' \in \Z$. One deduces that indeed $\Ext^k_{\cA'}(\iota^L(\Lgr_u), \iota^R(\Lgr_t) \la n \ra)=0$ unless $n=-k$, which finishes the proof.
\end{proof}

\begin{rmk}
The proof shows that the condition that $\Ext^k(\Lgr_s, \ngr_t\la n\ra)=0$ unless $k=-n$ already implies that $\cA$ is Koszul.  Similar arguments using the functors $\iota^L$, $\Pi^L$ instead of $\iota^R$, $\Pi^R$ show that if $\cA$ satisfies the dual condition that $\Ext^k(\dgr_s, \Lgr_t\la n\ra)=0$ unless $k=-n$, then $\cA$ is Koszul.
\end{rmk}

\subsection{$Q$-Koszul and standard $Q$-Koszul categories}
\label{ss:qkoszul}

In this subsection we study a generalization of the notions considered in~\S\ref{ss:koszul} that has been recently introduced by Parshall--Scott~\cite[\S3]{ps}. 

\begin{defn}\label{defn:q-koszul}
Let $\cA$ be a positively graded quasihereditary category. It is said to be \emph{$Q$-Koszul} if
\[
\Ext^k_\cA(\Delta^\circ_s, \nabla^\circ_t\la n\ra) = 0
\qquad\text{unless $n = -k$.}
\]
It is said to be \emph{standard $Q$-Koszul} if
\[
\Ext^k_\cA(\Delta^\circ_s, \ngr_t\la n\ra) = \Ext^k_\cA(\dgr_s, \nabla^\circ_t\la n\ra) = 0
\qquad\text{unless $n = -k$.}
\]
\end{defn}

The following result is an analogue of Proposition~\ref{prop:standard-koszul} in this context.  The same result appears in~\cite[Corollary~3.2]{ps:qka}, but in a somewhat different language, so as with Proposition~\ref{prop:standard-koszul}, we include a proof.

\begin{prop}
\label{prop:standard-Q-Koszul}
Let $\cA$ be 
a positively graded quasihereditary category. If $\cA$ is standard $Q$-Koszul, then it is $Q$-Koszul.
\end{prop}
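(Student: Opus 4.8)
The plan is to mimic the proof of Proposition~\ref{prop:standard-koszul}, replacing simple objects $\Lgr_s$ by the proper standard-type objects $\Delta^\circ_s$, $\nabla^\circ_s$ throughout, and to run an induction on $|\scS|$. In the base case $\scS = \{s\}$ the category $\cA^\circ$ is semisimple, so $\Delta^\circ_s = \nabla^\circ_s = \Lgr_s$ and $\Ext^k_\cA(\Lgr_s,\Lgr_s\la n\ra) = 0$ unless $k=n=0$ by~\eqref{eqn:qher-pos-ff}; hence the $Q$-Koszul condition holds trivially. For the inductive step, pick $s\in\scS$ minimal, set $\cA' := \cA_{\{s\}}$, $\cA'' := \cA/\cA_{\{s\}}$, with $\iota := \iota_{\{s\}}$, $\Pi := \Pi_{\{s\}}$; by Lemma~\ref{lem:qher-closed-subset} and Corollary~\ref{cor:quotient-pos} these are again positively graded quasihereditary. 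The first substep is to check that $\cA''$ is standard $Q$-Koszul: using the adjunctions of Lemma~\ref{lem:qher-closed-subset}, the identities~\eqref{eqn:adjoints-pi-d-n}, and the fact (which must be observed) that $\Pi$ sends $\Delta^\circ_t$, $\nabla^\circ_t$ to the corresponding proper standard/costandard objects of $(\cA'')^\circ$, one gets $\Ext^k_{\cA''}(\Delta^\circ_t,\ngr_u\la n\ra)\cong\Ext^k_\cA(\dgr\text{-type},\ldots)$ and dually, so the defining vanishing transfers. By induction $\cA''$ is $Q$-Koszul.

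Next I would feed the triangle $\iota\iota^R(\nabla^\circ_t)\to\nabla^\circ_t\to\Pi^R\Pi(\nabla^\circ_t)\xrightarrow{[1]}$ of Lemma~\ref{lem:qher-closed-subset} into $\Hom(\Delta^\circ_u,-\la n\ra)$, obtaining a long exact sequence whose middle term is $\Ext^k_\cA(\Delta^\circ_u,\nabla^\circ_t\la n\ra)$, whose $\cA''$-term $\Ext^k_{\cA''}(\Pi\Delta^\circ_u,\Pi\nabla^\circ_t\la n\ra)$ vanishes unless $n=-k$ by the $Q$-Koszulity of $\cA''$ just established, and whose $\cA'$-term is $\Ext^k_{\cA'}(\iota^L(\Delta^\circ_u),\iota^R(\nabla^\circ_t)\la n\ra)$. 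So it remains to show this last term vanishes unless $n=-k$. Since $\cA'$ is semisimple on the single object $\Lgr_s$, both $\iota^L(\Delta^\circ_u)$ and $\iota^R(\nabla^\circ_t)$ are direct sums of shifts/twists $\Lgr_s\la a\ra[b]$ of $\Lgr_s$; the point is that only the ``Koszul-diagonal'' shifts $a=-b$ occur. This is extracted exactly as in the proof of Proposition~\ref{prop:standard-koszul}: a summand $\Lgr_s\la a\ra[b]$ of $\iota^L(\Delta^\circ_u)$ forces $\Hom_{\Db\cA'}(\iota^L(\Delta^\circ_u),\Lgr_s\la a\ra[b])\cong\Ext^b_\cA(\Delta^\circ_u,\Lgr_s\la a\ra)\neq0$, and since $s$ is minimal $\Lgr_s=\nabla^\circ_s$, so the hypothesis $\Ext^b_\cA(\Delta^\circ_u,\ngr_s\la a\ra)=\Ext^b_\cA(\Delta^\circ_u,\nabla^\circ_s\la a\ra)=0$ unless $a=-b$ (here $\ngr_s=\Lgr_s$ since $s$ is minimal, so the two coincide) does the job; dually for $\iota^R(\nabla^\circ_t)$ using $\Delta^\circ_s=\Lgr_s=\dgr_s$ and the $\Ext^k_\cA(\dgr_s,\nabla^\circ_t\la n\ra)$ half of the hypothesis. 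One concludes $\Ext^k_{\cA'}(\iota^L(\Delta^\circ_u),\iota^R(\nabla^\circ_t)\la n\ra)=0$ unless $n=-k$, and the long exact sequence then gives the same for the middle term, completing the induction.

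The main obstacle I anticipate is the bookkeeping around the quotient category: one must verify that $\Pi$ carries $\Delta^\circ_t$ and $\nabla^\circ_t$ (for $t\notin\{s\}$) precisely to $\Delta^\circ_{\Pi(t)}$ and $\nabla^\circ_{\Pi(t)}$ in $(\cA'')^\circ$, i.e.\ that forming $\Gr^W_0$ commutes with $\pi_{\{s\}}$ and with passage to the quotient of the heart — this uses that $\cA''$ is positively graded (Corollary~\ref{cor:quotient-pos}) and that the weight filtrations of Proposition~\ref{prop:pos-qher}\eqref{it:pos-wt-filt} are canonical and respected by the quotient functor. A secondary subtlety is that $s$ being minimal means $\dgr_s = \ngr_s = \Lgr_s = \Delta^\circ_s = \nabla^\circ_s$, which is what makes the two halves of the standard $Q$-Koszul hypothesis degenerate to ordinary $\Ext$-vanishing statements on $\cA'$; one should state this reduction cleanly rather than juggling all five objects. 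Beyond that, the argument is a faithful transcription of Proposition~\ref{prop:standard-koszul}, with~\eqref{eqn:qher-pos-ff} playing the role that semisimplicity of $\cA'$ on $\Lgr_s$ plays there, and no new ideas are needed.
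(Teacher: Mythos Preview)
Your proposal is correct and follows essentially the same route as the paper's proof: induction on $|\scS|$ with $s$ minimal, reduction to the quotient $\cA'' = \cA/\cA_{\{s\}}$ via Corollary~\ref{cor:quotient-pos} and the compatibility $\Gr^W_0 \circ \Pi \cong \Pi \circ \Gr^W_0$, then the recollement triangle for $\nabla^\circ_t$ together with the observation that $\iota^L(\Delta^\circ_u)$ and $\iota^R(\nabla^\circ_t)$ are sums of $\Lgr_s\la m\ra[-m]$. You have also correctly flagged the one point the paper glosses over (``it is also clear that $\Gr^W_0(\Pi(\dgr_t)) \cong \Pi(\Delta^\circ_t)$''), namely the compatibility of the weight filtration with the quotient functor.
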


\begin{proof}
The proof is very similar to that of Proposition~\ref{prop:standard-koszul}. We proceed by induction on the cardinality of $\scS$, the base case being obvious. We choose $s \in \scS$ minimal, and set $\cA':=\cA_{\{s\}}$, $\cA'':=\cA/\cA'$, $\iota:=\iota_{\{s\}}$, $\Pi:=\Pi_{\{s\}}$. By Corollary~\ref{cor:quotient-pos}, the category $\cA''$ is positively graded. It is also clear that $\Gr^W_0(\Pi(\dgr_t)) \cong \Pi(\Delta^\circ_t)$ and $\Gr^W_0(\Pi(\ngr_t)) \cong \Pi(\nabla^\circ_t)$ for $t \neq s$. Then using~\eqref{eqn:adjoints-pi-d-n} as in the proof of Proposition~\ref{prop:standard-koszul}, one obtains that $\cA''$ is standard $Q$-Koszul; hence by induction it is $Q$-Koszul.

Now consider, for $t \in \scS$, the distinguished triangle
\[
\iota \circ \iota^R (\nabla^\circ_t) \to \nabla^\circ_t \to \Pi^R \circ \Pi (\nabla^\circ_t) \xrightarrow{[1]}.
\]
Applying the functor $\Hom(\Delta^\circ_u, -\la n \ra)$ (for some $u \in \scS$ and $n \in \Z$) we obtain a long exact sequence
\begin{multline*}
\cdots \to \Ext^k_{\cA'}(\iota^L(\Delta^\circ_u), \iota^R(\nabla^\circ_t) \la n \ra) \to \Ext^k_{\cA}(\Delta^\circ_u, \nabla^\circ_t \la n \ra) \\
\to \Ext^k_{\cA''}(\Pi(\Delta^\circ_u), \Pi(\nabla^\circ_t) \la n \ra) \to \cdots
\end{multline*}
By induction, the third term vanishes unless $n=-k$. Now one can easily check that both $\iota^L(\Delta^\circ_u)$ and $\iota^R(\nabla^\circ_t)$ are direct sums of objects of the form $\Lgr_s \la m \ra [-m]$ for $m \in \Z$, and we deduce that the first term also vanishes unless $n=-k$, which finishes the proof.
\end{proof}

\begin{rmk}\label{rmk:t-koszul}
It is natural to ask whether there is a notion of ``Koszul duality'' for $Q$-Koszul categories.

Recall that classical Koszul duality is a kind of derived equivalence that sends simple objects in one category to projective objects in the other.  There is a generalization of this notion due to Madsen~\cite{madsen}.  Suppose $\cA$ is a finite-length (but not necessarily quasihereditary) category satisfying conditions~\eqref{it:pos-ext} or~\eqref{it:pos-wt-filt} of Proposition~\ref{prop:pos-qher}.  Then it still makes sense to define a Serre subcategory $\cA^\circ$ as in Proposition~\ref{prop:parshall-scott}.  Assume that $\cA^\circ$ has the structure of a quasihereditary category, and that for any two tilting objects $T^\circ_s, T^\circ_t \in \cA^\circ$, we have
\[
\Ext^k_\cA(T^\circ_s, T^\circ_t\la n\ra) = 0\qquad\text{unless $n = -k$.}
\]
Such a category $\cA$ is said to be \emph{$T$-Koszul}.  Madsen's theory leads to a new $T$-Koszul abelian category $\mathcal{B}$ and a derived equivalence $\Db(\cA) \simto \Db(\mathcal{B})$ such that tilting objects of $\cA^\circ$ correspond to projective objects in $\mathcal{B}$.  If $\cA^\circ$ happens to be semisimple, then Madsen's notion reduces to ordinary Koszul duality.

Clearly, every $Q$-Koszul category is $T$-Koszul. But it is not known whether the $T$-Koszul dual of a $Q$-Koszul category must be $Q$-Koszul, see~\cite[Questions~4.2]{ps:qka}. 
\end{rmk}

\section{Weights}
\label{sec:weights}

\subsection{Setting}
\label{ss:setting}

In this section (and the next one) we work in the setting of~\cite[\S\S2--3]{modrap2}. In particular, we choose a prime number $\ell$ and a finite extension $\K$ of $\Ql$.  We denote by $\O$ the ring of integers of $\K$ and by $\F$ the residue field of $\O$. We use the letter $\E$ to denote any member of $(\K,\O,\F)$.

We fix a complex algebraic variety $X$ endowed with a finite stratification $X=\bigsqcup_{s \in \scS} X_s$ where each $X_s$ is isomorphic to an affine space. We denote by $\Db_\scS(X,\E)$ the derived $\scS$-constructible category of sheaves on $X$, with coefficients in $\E$. The cohomological shift in this category will be denoted $\{1\}$.
We assume that the assumptions $\textbf{(A1)}$ (``existence of enough parity complexes'') and $\textbf{(A2)}$ (``standard and costandard objects are perverse'') of~\cite{modrap2} are satsified. Then one can consider the additive category $\Parity_{\scS}(X,\E)$ of parity complexes on $X$ (in the sense of~\cite{jmw}; see~\cite[\S 2.1]{modrap2} for a reminder of the main properties of this category) and the ``mixed derived category'' $\Dmix_{\scS}(X,\E):=\Kb \Parity_{\scS}(X,\E)$. This category possesses two important autoequivalences: the cohomological shift $[1]$, and the ``internal'' shift $\{1\}$ induced by the shift functor on parity complexes. We also set $\la 1 \ra := \{-1\}[1]$. If $h : Y \to X$ is a locally closed inclusion of a union of strata, then we have well-defined functors
\[
h_*, h_! : \Dmix_{\scS}(Y,\E) \to \Dmix_{\scS}(X,\E), \qquad h^*, h^! : \Dmix_{\scS}(Y,\E) \to \Dmix_{\scS}(X,\E)
\]
which satisfy all the usual properties; see~\cite[\S 2.5]{modrap2}. (Here and below, we also denote by $\scS$ the restriction of the stratification to $Y$.) We also have ``extension of scalars'' functors
\[
\K : \Dmix_{\scS}(X,\O) \to \Dmix_{\scS}(X,\K), \qquad \F : \Dmix_{\scS}(X,\O) \to \Dmix_{\scS}(X,\F)
\]
and a ``Verdier duality'' antiequivalence
\[
\D : \Dmix_{\scS}(X,\E) \simto \Dmix_{\scS}(X,\E).
\]

The triangulated category $\Dmix_{\scS}(X,\E)$ can be endowed with a ``perverse t-struc\-ture''; see~\cite[Definition~3.3]{modrap2}. We denote by $\Perv^\mix_\scS(X,\E)$ the heart of this t-structure. Objects of $\Perv^\mix_\scS(X,\E)$ will be called ``mixed perverse sheaves.'' If $\E=\O$ or $\K$, this category is a graded quasihereditary category, with shift functor $\la 1 \ra$, simple objects $\IC^\mix_s:=i_{s!*} \uuE_{X_s}$, standard objects $\dmix_s := i_{s!} \uuE_{X_s}$, and costandard objects $\nmix_s:=i_{s*} \uuE_{X_s}$. (Here, $i_s : X_s \to X$ is the inclusion, and $\uuE_{X_s} := \underline{\E}_{X_s} \{\dim(X_s)\}$, where $\underline{\E}_{X_s}$ is the constant sheaf on $X_s$, an object of $\Parity_{\scS}(X_s,\E)$.) We denote by $\cP^\mix_s$ the projective cover of $\IC_s^\mix$, and by $\cT^\mix_s$ the indecomposable tilting object associated with $s$. When necessary, we add a mention of the coefficients ``$\E$'' we consider. Note in particular that we have
\[
\K(\IC_s^\mix(\O)) \cong \IC_s^\mix(\K), \quad \F(\cP_s^\mix(\O)) \cong \cP_s^\mix(\F), \quad \F(\cT_s^\mix(\O)) \cong \cT_s^\mix(\F).
\]
(For all of this, see~\cite[\S\S 3.2--3.3]{modrap2}.)

As in~\cite{modrap2}, we denote by $\cE_s$ the unique indecomposable parity complex which is supported on $\overline{X_s}$ and whose restriction to $X_s$ is $\uuE_{X_s}$. We denote this same object by $\cE^\mix_s$ when it is regarded as an object of $\Dmix_{\scS}(X,\E)$.

We do not know whether $\Perv^\mix_\scS(X,\F)$ is positively graded under these assumptions.  The main result of this section, Proposition~\ref{prop:perv-pos}, gives a number of conditions that are equivalent to $\Perv^\mix_\scS(X,\F)$ being positively graded.  Along the way to that result, we construct a candidate abelian category $\Perv^\circ_\scS(X,\F)$ that ``should'' be the category $\cA^\circ$ of Proposition~\ref{prop:parshall-scott} in this case.  However, $\Perv^\circ_\scS(X,\F)$ is defined even when $\Perv^\mix_\scS(X,\F)$ is not positively graded. 

\subsection{Weights}
\label{ss:weights}

We begin by introducing a notion that will ``morally'' play the same role in $\Dmix_\scS(X,\E)$ that is played by Deligne's theory of weights (see~\cite[\S5]{bbd}) in the realm of $\ell$-adic \'etale sheaves.

\begin{defn}
An object $M \in \Dmix_\scS(X,\E)$ is said to \emph{have weights${}\le n$} (resp.~${}\ge n$) if it is isomorphic to a complex $\cdots \to M^{-1} \to M^0 \to M^1 \to \cdots$ of objects in $\Parity_{\scS}(X,\E)$ in which $M^i = 0$ for all $i < -n$ (resp.~$i > -n$).  It is said to be \emph{pure of weight $n$} if has weights${}\le n$ and${}\ge n$.
\end{defn}

The full subcategory of $\Dmix_\scS(X,\E)$ consisting of objects with weights${}\le n$ (resp.~${}\ge n$) is denoted $\Dmix_\scS(X,\E)_{\le n}$ (resp.~$\Dmix_\scS(X,\E)_{\ge n}$).  Using standard arguments in triangulated categories one can check that
these categories admit the following alternative characterizations:
\begin{align*}
\Dmix_\scS(X,\E)_{\le n} &= \{ M \mid \text{$\Hom(M, \cE^\mix_s\{m\}[k]) = 0$ for all $m \in \Z$ and all $k > n$} \}, \\
\Dmix_\scS(X,\E)_{\ge n} &= \{ M \mid \text{$\Hom(\cE^\mix_s\{m\}[k], M) = 0$ for all $m \in \Z$ and all $k < n$} \},
\end{align*}
and moreover that an object in $\Dmix_\scS(X,\E)$ is pure of weight $n$ if and only if it is a direct sum of objects of the form $\cE^\mix_s\{m\}[n]$.

Note that weights are stable under extensions.  That is, if the first and third terms of a distinguished triangle have weights${}\le n$ (resp.~${}\ge n$), then the same holds for the middle term.

\begin{ex}\label{ex:wts-stratum}
Consider a single stratum $X_s$.  For a finitely-generated $\E$-module $N$, let $\underline{N}$ denote the corresponding constant sheaf on $X_s$, and let $\uuline{N} = \uline{N}\{\dim X_s\}$. (Here we use the same convention as in~\cite[\S 3.5]{modrap2} in case $\E=\O$ and $N$ is not free.) Every object $M \in \Dmix_\scS(X_s,\E)$ is isomorphic (canonically if $\E = \F$ or $\K$, and noncanonically if $\E = \O$) to a finite direct sum
\begin{equation}\label{eqn:wts-stratum}
M \cong \bigoplus_{i, j \in \Z} \uuline{M}{}^i_j\{j\}[-i]
\end{equation}
where the $M^i_j$ are various finitely generated $\E$-modules.  With this in mind:
\begin{enumerate}
\item If $\E = \F$ or $\K$, then $M$ has weights${}\le 0$ if and only if $M^i_j = 0$ for all $i < 0$.
\item If $\E = \O$, then $M$ has weights${}\le 0$ if and only if $M^i_j = 0$ for all $i < 0$, and all $M^0_j$ are torsion-free.
\end{enumerate}
\end{ex}

\begin{lem}
\label{lem:weights-dmix-nmix}
For any $s \in \scS$, $\dmix_s$ has weights${}\le 0$, and $\nmix_s$ has weights${}\ge 0$.
\end{lem}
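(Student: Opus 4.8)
The plan is to prove this by induction on the number of strata in the support, using the recollement triangles for the functors $i_!$, $i_*$ and the mixed structure. First I would recall that $\dmix_s = i_{s!} \uuE_{X_s}$, and that $\uuE_{X_s}$, viewed as an object of $\Dmix_\scS(X_s,\E) = \Kb\Parity_\scS(X_s,\E)$, is concentrated in a single cohomological degree (it is the constant parity sheaf placed in degree $0$), hence has weights $\le 0$ and $\ge 0$ by Example~\ref{ex:wts-stratum}. So the question is how the weight condition interacts with the pushforward $i_{s!}$.

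The key observation is that the functors $h_!$ and $h_*$ for a locally closed inclusion $h$ of a union of strata should preserve, respectively, the conditions ``weights $\le n$'' and ``weights $\ge n$''. This is the ``mixed modular'' analogue of Deligne's stability properties (\cite[Stabilit\'es~5.1.14]{bbd}), and indeed the paper has announced (in~\S\ref{ss:intro-weights}) that these stabilities hold. Concretely, I would argue as follows. Let $j : U \hookrightarrow X$ be the open inclusion of a stratum $X_s$ that is open in its closure (or more precisely work with $\overline{X_s}$ and induct), and let $i : Z \hookrightarrow X$ be the complementary closed inclusion. For $j_! \uuE_{X_s}$, use the triangle $i_! i^* (j_! \uuE_{X_s})[-1] \to j_! \uuE_{X_s} \to \dots$; but $i^* j_! = 0$, so in fact one needs the adjunction triangle relating $j_!$ to $j_*$, or simpler still: $j_!$ sends an object represented by a complex of parity complexes on $U$ to an object represented by a complex of parity complexes on $X$ with the \emph{same} cohomological amplitude, since $(j_!)$ of a parity complex need not be parity, but $j_!$ is computed termwise and then one resolves. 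The cleanest route: characterize ``weights $\le 0$'' via the $\Hom$-vanishing $\Hom(M, \cE^\mix_t\{m\}[k]) = 0$ for $k > 0$, as recorded after the definition. Then $\Hom(\dmix_s, \cE^\mix_t\{m\}[k]) = \Hom(i_{s!}\uuE_{X_s}, \cE^\mix_t\{m\}[k]) \cong \Hom(\uuE_{X_s}, i_s^! \cE^\mix_t\{m\}[k])$, and $i_s^! \cE^\mix_t$ lives in $\Dmix_\scS(X_s,\E)$; one must check this object has weights $\ge 0$, equivalently (by the dual characterization applied on the single stratum $X_s$) that $\Hom(\uuE_{X_s}, i_s^!\cE^\mix_t\{m\}[k]) = 0$ for $k < 0$ — wait, we want $k>0$ vanishing. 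The point is that $i_s^! \cE^\mix_t$ has weights $\ge 0$: this is the costalk of a parity complex, and a fundamental property of parity complexes (\cite{jmw}, recalled in~\cite[\S2.1]{modrap2}) is exactly that $i_s^! \cE_t$ is a direct sum of $\uuE_{X_s}\{m\}[k]$ with $k \ge 0$ — i.e. it is ``$*$-even/even''. Hence $i_s^!\cE^\mix_t$ is pure of weights $\ge 0$ in the sense of Example~\ref{ex:wts-stratum}, so $\Hom(\uuE_{X_s}, i_s^!\cE^\mix_t\{m\}[k])$ vanishes for $k < 0$; and for $k > 0$ it vanishes because $\uuE_{X_s}$ has weights $\le 0$ and $i_s^!\cE^\mix_t$ has weights $\ge 0$ — no, more directly, $\Hom(\uuE_{X_s}, \uuline{N}\{m'\}[k]) = 0$ for $k > 0$ when $N$ is a free $\E$-module (or $\E$ a field), which follows from the computation of $\Hom$'s on a single affine-space stratum. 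So $\Hom(\dmix_s, \cE^\mix_t\{m\}[k]) = 0$ for $k > 0$, giving $\dmix_s \in \Dmix_\scS(X,\E)_{\le 0}$. The statement for $\nmix_s = i_{s*}\uuE_{X_s}$ follows by the dual argument (or by applying Verdier duality $\D$, which swaps $\le n$ and $\ge -n$, $i_!$ and $i_*$, and fixes $\uuE_{X_s}$ up to the appropriate shift).

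The main obstacle I expect is making precise the claim that $i_s^! \cE^\mix_t$ and $i_s^* \cE^\mix_t$ are ``even'' in the correct normalization — i.e. sums of $\uuE_{X_s}\{m\}[k]$ with $k \ge 0$ (resp. $\le 0$) — and that this is exactly the defining parity-purity property of parity complexes transported into the mixed category. One has to be careful about the two shift functors $[1]$ and $\{1\}$ and the convention $\la 1\ra = \{-1\}[1]$, and about the $\E = \O$ case where ``torsion-free'' enters (as flagged in Example~\ref{ex:wts-stratum}(2)); for parity complexes the stalks/costalks are free $\E$-modules, so this causes no trouble, but it must be noted. Everything else is a routine unwinding of adjunctions and the $\Hom$-characterizations of the weight categories recorded just before Example~\ref{ex:wts-stratum}.
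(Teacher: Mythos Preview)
Your proposal is correct and, once you settle on the $\Hom$-characterization route, it is exactly the paper's approach: by adjunction $\Hom(\dmix_s, \cE^\mix_t\{m\}[k]) \cong \Hom(\uuE_{X_s}, i_s^!\cE^\mix_t\{m\}[k])$, and since the (co)stalk of a parity complex on a stratum is again a parity complex (this is the content of \cite[Remark~2.7]{modrap2}), $i_s^!\cE^\mix_t$ is pure of weight~$0$, so the $\Hom$ vanishes for $k\neq 0$. One small correction: $i_s^!\cE^\mix_t$ is a direct sum of objects $\uuE_{X_s}\{m\}$ with \emph{no} nontrivial $[k]$-shifts (i.e.\ it is pure of weight~$0$, not merely of weights~$\ge 0$); your weaker claim still suffices, but the stronger fact makes the vanishing immediate and lets you drop the induction scaffolding and the side discussion of torsion entirely.
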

\begin{proof}
It is clear by adjunction (and using~\cite[Remark~2.7]{modrap2}) that we have $\Hom(\dmix_s, \cE^\mix_t\{m\}[k]) = 0$ if $k \neq 0$, and similarly for $\nmix_s$.
\end{proof}

\begin{lem}\label{lem:wt-stability}
Let $j: U \hookrightarrow X$ be the inclusion of an open union of strata, and let $i: Z \hookrightarrow X$ be the complementary closed inclusion.
\begin{enumerate}
\item $j^{*}$ and $i_{*}$ preserve weights.\label{it:wt-stab-exact}
\item $j_{!}$ sends $\Dmix_\scS(U,\E)_{\le n}$ to $\Dmix_\scS(X,\E)_{\le n}$, and $j_{*}$ sends $\Dmix_\scS(U,\E)_{\ge n}$ to $\Dmix_\scS(X,\E)_{\ge n}$.\label{it:wt-stab-j}
\item $i^{*}$ sends $\Dmix_\scS(X,\E)_{\le n}$ to $\Dmix_\scS(Z,\E)_{\le n}$, and $i^{!}$ sends $\Dmix_\scS(X,\E)_{\ge n}$ to $\Dmix_\scS(Z,\E)_{\ge n}$.\label{it:wt-stab-i}
\item If $Z$ consists of a single stratum, then $i^{*}$ and $i^{!}$ preserve weights.\label{it:wt-stab-strat}
\item $\D$ exchanges $\Dmix_\scS(X,\E)_{\le n}$ and $\Dmix_\scS(X,\E)_{\ge -n}$.\label{it:wt-stab-d}
\end{enumerate}
\end{lem}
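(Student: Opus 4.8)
The plan is to prove the five parts of Lemma~\ref{lem:wt-stability} by combining the two characterizations of $\Dmix_\scS(X,\E)_{\le n}$ and $\Dmix_\scS(X,\E)_{\ge n}$ recorded just above the statement---namely, $M$ has weights${}\le n$ iff $\Hom(M, \cE^\mix_s\{m\}[k]) = 0$ for all $m$ and all $k > n$, and dually for weights${}\ge n$---with the standard adjunction identities among $j^*, j_*, j_!, i^*, i_*, i^!$ on $\Dmix_\scS$ from \cite[\S 2.5]{modrap2}. The recurring mechanism is: to test whether $fM$ has weights $\le n$ for some functor $f$, rewrite $\Hom(fM, \cE^\mix_s\{m\}[k])$ using adjunction as a $\Hom$ in the source category against the image under the adjoint of a parity complex, and then exploit the fact that the relevant adjoint sends parity complexes (or at least objects of the appropriate weight) to objects of controlled weight.

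First I would handle~\eqref{it:wt-stab-exact}. For $j^*$: since $j$ is an open inclusion of a union of strata, $j^* = j^!$ sends parity complexes on $X$ to parity complexes on $U$ (restriction of a parity complex to an open union of strata is parity, by \cite{jmw}), so it visibly takes a complex of parity objects concentrated in degrees $\ge -n$ (resp.\ $\le -n$) to one of the same shape; hence $j^*$ preserves both weights${}\le n$ and weights${}\ge n$. For $i_*$: here $i$ is a closed inclusion of a union of strata, and again $i_*$ of a parity complex is parity, so the same termwise argument applies. (One must note that $i_* = i_!$ for a closed inclusion, which is part of the formalism.) Thus~\eqref{it:wt-stab-exact} is immediate from the definition of weights via complexes of parity objects, with no adjunction needed.

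Next, for~\eqref{it:wt-stab-j}, \eqref{it:wt-stab-i}, and~\eqref{it:wt-stab-strat} I would argue via the Hom-characterizations and the adjoint triples $(j_!, j^*, j_*)$ and $(i^*, i_* = i_!, i^!)$. For $j_!$: given $M \in \Dmix_\scS(U,\E)_{\le n}$, compute $\Hom_X(j_!M, \cE^\mix_s\{m\}[k]) \cong \Hom_U(M, j^*\cE^\mix_s\{m\}[k])$; since $j^*$ preserves weights by~\eqref{it:wt-stab-exact}, $j^*\cE^\mix_s$ is still pure of weight $0$, i.e.\ a direct sum of shifts $\cE^\mix_t\{m'\}$ of parity complexes on $U$, and so the right-hand group vanishes for $k > n$ by the characterization of $\Dmix_\scS(U,\E)_{\le n}$. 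This gives $j_!M \in \Dmix_\scS(X,\E)_{\le n}$; the statement about $j_*$ is dual, using $\Hom_X(\cE^\mix_s\{m\}[k], j_*M) \cong \Hom_U(j^*\cE^\mix_s\{m\}[k], M)$. For~\eqref{it:wt-stab-i}, $i^*$ is left adjoint to $i_* = i_!$, so $\Hom_Z(i^*M, \cE^\mix_s\{m\}[k]) \cong \Hom_X(M, i_*\cE^\mix_s\{m\}[k])$, and $i_*$ preserves weights by~\eqref{it:wt-stab-exact}, so $i_*\cE^\mix_s$ is pure of weight $0$ and the group vanishes for $k>n$ when $M$ has weights${}\le n$; dually for $i^!$ using $\Hom_Z(\cE^\mix_s\{m\}[k], i^!M) \cong \Hom_X(i_*\cE^\mix_s\{m\}[k], M)$. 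For~\eqref{it:wt-stab-strat}, when $Z = X_s$ is a single stratum we additionally need that $i^*$ and $i^!$ raise no weight \emph{down}, i.e.\ also preserve weights${}\ge n$; here I would combine~\eqref{it:wt-stab-i} with the triangle $i_*i^!M \to \ldots$ (or rather $j_!j^*M \to M \to i_*i^*M \xrightarrow{[1]}$ and its dual), using that on a single stratum $i^* \nmix_s$ and $i^!\dmix_s$ are well-understood and that $j^*M$, $j_!$ parts have been controlled above, and that weights are stable under extensions; the cleanest route is probably to observe that on the point-like stratum Verdier duality together with~\eqref{it:wt-stab-i} forces preservation of both bounds.

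Finally,~\eqref{it:wt-stab-d} follows directly from the Hom-characterizations together with the fact that $\D$ is an antiequivalence with $\D(\cE^\mix_s\{m\}[k]) \cong \cE^\mix_s\{-m\}[-k]$ (parity complexes are Verdier self-dual up to shift, \cite{jmw}): then $\Hom(\D M, \cE^\mix_s\{m\}[k]) \cong \Hom(\cE^\mix_s\{-m\}[-k], M) \cong \Hom(\cE^\mix_s\{m'\}[-k], M)$, which vanishes for all $m$ and all $k > n$ iff $\Hom(\cE^\mix_s\{m'\}[k'], M) = 0$ for all $m'$ and all $k' < -n$, i.e.\ iff $M \in \Dmix_\scS(X,\E)_{\ge -n}$. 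The main obstacle I anticipate is part~\eqref{it:wt-stab-strat}: parts~\eqref{it:wt-stab-exact}--\eqref{it:wt-stab-i} and~\eqref{it:wt-stab-d} are formal consequences of adjunction and the Hom-characterizations, but pinning down that $i^*$ and $i^!$ \emph{preserve} (not merely bound) weights over a single stratum requires the finer analysis of Example~\ref{ex:wts-stratum}---in particular, in the case $\E = \O$, keeping track of the torsion-freeness condition on the degree-$0$ part---and a careful use of the recollement triangles rather than a one-line adjunction argument.
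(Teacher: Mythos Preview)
Your arguments for parts~\eqref{it:wt-stab-exact}, \eqref{it:wt-stab-j}, \eqref{it:wt-stab-i}, and~\eqref{it:wt-stab-d} are correct and essentially coincide with the paper's: the paper disposes of~\eqref{it:wt-stab-exact} and~\eqref{it:wt-stab-d} by the termwise ``sends parity complexes to parity complexes'' observation, and derives~\eqref{it:wt-stab-j} and~\eqref{it:wt-stab-i} from~\eqref{it:wt-stab-exact} ``by adjunction''---your Hom-characterization argument simply unpacks what that phrase means.

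Where you diverge is part~\eqref{it:wt-stab-strat}, which you single out as the main obstacle. In the paper this is the \emph{same} one-line argument as~\eqref{it:wt-stab-exact} and~\eqref{it:wt-stab-d}: when $Z=X_s$ is a single stratum, $i^*$ and $i^!$ take parity complexes to parity complexes. Indeed, by the very definition of parity in~\cite{jmw}, the stalks and costalks of an even (resp.\ odd) complex have cohomology only in even (resp.\ odd) degrees; on an affine space any such object is a direct sum of objects $\uuE_{X_s}\{m\}$ and hence parity. Applying $i^*$ or $i^!$ termwise to a complex of parity objects then preserves both weight bounds at once. No recollement triangles, no torsion bookkeeping from Example~\ref{ex:wts-stratum}, and no Verdier-duality trick are needed. (Note also that your sketched ``Verdier duality together with~\eqref{it:wt-stab-i}'' route does not close as stated: to show $i^*$ preserves weights${}\ge n$ you would need $i^!$ to preserve weights${}\le -n$, which~\eqref{it:wt-stab-i} does not provide.)
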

\begin{proof}
Parts~\eqref{it:wt-stab-exact}, \eqref{it:wt-stab-strat}, and~\eqref{it:wt-stab-d} are clear, because in those cases, the functors take parity complexes to parity complexes.  Parts~\eqref{it:wt-stab-j} and~\eqref{it:wt-stab-i} then follow from part~\eqref{it:wt-stab-exact} by adjunction.
\end{proof}

\begin{lem}\label{lem:wt-stalks}
Let $\cF \in \Dmix_\scS(X,\E)$.  We have:
\begin{enumerate}
\item $\cF$ has weights${}\le n$ if and only if $i_s^*\cF$ has weights${}\le n$ for all $s \in \scS$.
\item $\cF$ has weights${}\ge n$ if and only if $i_s^!\cF$ has weights${}\ge n$ for all $s \in \scS$.
\end{enumerate}
\end{lem}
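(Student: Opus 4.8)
The plan is to prove both statements by induction on $\#\scS$, deducing part~(2) from part~(1) by Verdier duality. The forward implication in~(1) is immediate: writing $i_s$ as the composition of the open inclusion $j\colon U\hookrightarrow X$ of a union of strata in which $X_s$ is closed, followed by the closed inclusion $X_s\hookrightarrow U$, Lemma~\ref{lem:wt-stability}\eqref{it:wt-stab-exact} (for $j^*$, over $X$) and Lemma~\ref{lem:wt-stability}\eqref{it:wt-stab-strat} (for the single-stratum inclusion, over $U$) together show that $i_s^*$ preserves the property of having weights${}\le n$. So the content is the converse.

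For the converse in~(1), choose $s_0\in\scS$ minimal, so that $i:=i_{s_0}\colon X_{s_0}\hookrightarrow X$ is a closed inclusion; let $j\colon U:=X\smallsetminus X_{s_0}\hookrightarrow X$ be the complementary open inclusion. The recollement triangle
\[
j_!j^*\cF\to\cF\to i_*i^*\cF\xrightarrow{[1]}
\]
exhibits $\cF$ as an extension of $i_*i^*\cF$ by $j_!j^*\cF$. By hypothesis $i^*\cF=i_{s_0}^*\cF$ has weights${}\le n$, hence so does $i_*i^*\cF$ by Lemma~\ref{lem:wt-stability}\eqref{it:wt-stab-exact}. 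For the other term, note that for every $t\in\scS\smallsetminus\{s_0\}$ we have $i_t^*j^*\cF\cong i_t^*\cF$, which has weights${}\le n$ by hypothesis; since $U$ has one fewer stratum and still satisfies the standing assumptions of~\S\ref{ss:setting}, the inductive hypothesis applies over $U$ and gives that $j^*\cF$ has weights${}\le n$, whence $j_!j^*\cF$ has weights${}\le n$ by Lemma~\ref{lem:wt-stability}\eqref{it:wt-stab-j}. As the class of objects with weights${}\le n$ is stable under extensions, $\cF$ has weights${}\le n$. The base case $\#\scS=1$ is trivial, since then the identification $\Dmix_\scS(X,\E)=\Dmix_\scS(X_s,\E)$ reduces the claim to a tautology (cf.~Example~\ref{ex:wts-stratum}).

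For part~(2), apply $\D$. Verdier duality exchanges the subcategories of objects with weights${}\le -n$ and weights${}\ge n$ (Lemma~\ref{lem:wt-stability}\eqref{it:wt-stab-d}) and intertwines the two pullback functors, $\D\circ i_s^!\cong i_s^*\circ\D$. Hence $\cF$ has weights${}\ge n$ iff $\D\cF$ has weights${}\le -n$ iff $i_s^*\D\cF$ has weights${}\le -n$ for all $s$ (by part~(1)) iff $\D i_s^!\cF$ has weights${}\le -n$ for all $s$ iff $i_s^!\cF$ has weights${}\ge n$ for all $s$. (Alternatively, one can run the dual induction directly using the triangle $i_*i^!\cF\to\cF\to j_*j^*\cF\xrightarrow{[1]}$ together with Lemma~\ref{lem:wt-stability}\eqref{it:wt-stab-j}.)

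There is no serious obstacle here; the only points requiring a line of care are that the open union of strata $U$ inherits the assumptions $\mathbf{(A1)}$, $\mathbf{(A2)}$ of~\S\ref{ss:setting} (so that Lemma~\ref{lem:wt-stability} and the inductive hypothesis are available over $U$) and that the recollement triangle for the open/closed decomposition exists in the mixed setting — both are recorded in~\cite{modrap2}.
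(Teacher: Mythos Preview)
Your proof is correct and follows essentially the same approach as the paper: induction on the number of strata, removing a closed stratum and using the recollement triangle $j_!j^*\cF\to\cF\to i_*i^*\cF\xrightarrow{[1]}$ together with Lemma~\ref{lem:wt-stability}. You supply a bit more detail than the paper does (spelling out the factorization of $i_s$ for the forward direction and the Verdier-duality reduction for part~(2)), but the argument is the same.
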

\begin{proof}
We will only treat the first assertion.  The ``only if'' direction is part of Lemma~\ref{lem:wt-stability}, so we need only prove the ``if'' direction.  In that case, we proceed by induction on the number of strata in $X$.  If $X$ consists of a single stratum, there is nothing to prove.  Otherwise, suppose $i_s^*\cF$ has weights${}\le n$ for all $s$.  Choose a closed stratum $X_s \subset X$.  Let $U = X \smallsetminus X_s$, and let $j: U \hookrightarrow X$ be the inclusion map.  Then $j^*\cF$ has weights${}\le n$ by induction.  The first and last terms of the distinguished triangle $j_!j^*\cF \to \cF \to i_{s*}i_s^*\cF \to$ have weights${}\le n$ by Lemma~\ref{lem:wt-stability}, so the middle term does as well.
\end{proof}

\subsection{Baric truncation functors}
\label{ss:baric}

For $n \in \Z$, we define two full triangulated subcategories of $\Dmix_\scS(X,\E)$ as follows:
\begin{equation}\label{eqn:baric-defn}
\begin{aligned}
\Dmix_\scS(X,\E)_{\bleq n} &:= \text{the subcategory generated by the $\dmix_s\la m\ra$ for $m \le n$}\\
\Dmix_\scS(X,\E)_{\bgeq n} &:= \text{the subcategory generated by the $\nmix_s\la m\ra$ for $m \ge n$}
\end{aligned}
\end{equation}
We also put
\[
\Dmix_\scS(X,\E)^\circ := \Dmix_\scS(X,\E)_{\bleq 0} \cap \Dmix_\scS(X,\E)_{\bgeq 0}.
\]

\begin{ex}\label{ex:baric-stratum}
With the notation of Example~\ref{ex:wts-stratum}, the object $M$ in~\eqref{eqn:wts-stratum} lies in $\Dmix_\scS(X_s,\E)_{\bleq 0}$ if and only if $M^i_j = 0$ for all $j < 0$.
\end{ex}

\begin{lem}\label{lem:baric-basic}
\begin{enumerate}
\item 
For any $A \in \Dmix_\scS(X,\E)_{\bleq n}$ and $B \in \Dmix_\scS(X,\E)_{\bgeq n+1}$, we have  $\Hom(A,B) = 0$.\label{it:baric-orth}
\item 
The inclusion $\Dmix_\scS(X,\E)_{\bleq n} \hookrightarrow \Dmix_\scS(X,\E)$ admits a right adjoint $\beta_{\bleq n}: \Dmix_\scS(X,\E) \to \Dmix_\scS(X,\E)_{\bleq n}$, which is a triangulated functor.
Similarly, the inclusion $\Dmix_\scS(X,\E)_{\bgeq n} \hookrightarrow \Dmix_\scS(X,\E)$ admits a left adjoint $\beta_{\bgeq n}: \Dmix_\scS(X,\E) \to \Dmix_\scS(X,\E)_{\bgeq n}$, which is a triangulated functor.\label{it:baric-trunc}
\item For every object $M \in \Dmix_\scS(X,\E)$ and every $n \in \Z$, there is a functorial distinguished triangle
\[
\beta_{\bleq n}M \to M \to \beta_{\bgeq n+1}M \xrightarrow{[1]}.
\]
Moreover, if $M' \in \Dmix_\scS(X,\E)_{\bleq n}$ and $M'' \in \Dmix_\scS(X,\E)_{\bgeq n+1}$, for any distinguished triangle $M' \to M \to M'' \xrightarrow{[1]}$ there exist canonical isomorphisms $\varphi : M' \simto \beta_{\bleq n}M$ and $\psi : M'' \simto \beta_{\bgeq n+1}M$ such that $(\varphi, \id_M, \psi)$ is an isomorphism of distinguished triangles.\label{it:baric-tri}
\item All the $\beta_{\bleq n}$ and $\beta_{\bgeq m}$ commute with one another.\label{it:baric-comm}
\end{enumerate}
\end{lem}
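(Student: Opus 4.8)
The plan is to recognize Lemma~\ref{lem:baric-basic} as the statement that the pair of subcategories $\bigl(\Dmix_\scS(X,\E)_{\bleq n}, \Dmix_\scS(X,\E)_{\bgeq n+1}\bigr)$ forms what is sometimes called a \emph{baric structure} (an analogue of a t-structure, or of a stability condition of ``semi-orthogonal decomposition'' type) on the triangulated category $\Dmix_\scS(X,\E)$, and to deduce all four parts from this framework. The key input is the weight-zero orthogonality between standard and costandard objects recorded in~\cite[Equation~(A.1)]{modrap2}, namely that $\Hom(\dmix_s, \nmix_t\la m\ra[k])$ vanishes unless $s=t$, $k=m=0$; combined with the fact (from the graded quasihereditary structure) that the $\dmix_s\la m\ra$, resp.\ the $\nmix_s\la m\ra$, generate $\Dmix_\scS(X,\E)$ as a triangulated category.

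\emph{Part~\eqref{it:baric-orth}.} First I would observe that $\Dmix_\scS(X,\E)_{\bgeq n+1}$ is, tautologically, contained in the right-orthogonal of the generating set $\{\dmix_s\la m\ra : m\le n\}$: indeed for $m\le n$, $m'\ge n+1$, and any $k$, $\Hom(\dmix_s\la m\ra, \nmix_t\la m'\ra[k])=\Hom(\dmix_s,\nmix_t\la m'-m\ra[k])$ with $m'-m\ge 1>0$, which vanishes by the cited $\Ext$-computation. Since the full subcategory of objects $A$ with $\Hom(A,B)=0$ for all $B\in\Dmix_\scS(X,\E)_{\bgeq n+1}$ is triangulated and closed under the autoequivalences $\la m\ra$ with $m\le n$ and contains every $\dmix_s\la m\ra$ with $m\le n$, it contains all of $\Dmix_\scS(X,\E)_{\bleq n}$; this is the desired vanishing.

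\emph{Parts~\eqref{it:baric-trunc} and~\eqref{it:baric-tri}.} For the existence of the adjoints, the standard move is to verify that every object $M$ fits into a triangle $M'\to M\to M''\xrightarrow{[1]}$ with $M'\in\Dmix_\scS(X,\E)_{\bleq n}$ and $M''\in\Dmix_\scS(X,\E)_{\bgeq n+1}$; granted Part~\eqref{it:baric-orth}, a by-now-routine argument (identical in structure to the proof that a torsion pair yields truncation functors, or cf.\ the construction of the baric truncations in Achar's earlier work on weight truncation) shows such a triangle is unique up to unique isomorphism, depends functorially on $M$, and that $M\mapsto M'$, $M\mapsto M''$ are the claimed adjoints $\beta_{\bleq n}$, $\beta_{\bgeq n+1}$, which are then automatically triangulated. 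To produce the triangle I would induct on the number of strata using the recollement: for a closed stratum $X_s$ with open complement $j:U\hookrightarrow X$, one first baric-truncates $j^*M$ inside $\Dmix_\scS(U,\E)$ by induction, pushes forward, and then corrects the resulting cone over the stratum $X_s$ using the explicit description of $\Dmix_\scS(X_s,\E)$ from Examples~\ref{ex:wts-stratum} and~\ref{ex:baric-stratum} — there every object is a direct sum of twists of constant sheaves, so splitting off the summands with internal shift $\ge n+1$ versus $\le n$ is immediate. One must check the two candidate pieces land in the right subcategories: that $j_!$ of something $\bleq n$ is $\bleq n$, and $i_{s*}$ of something $\bleq n$ on a stratum is $\bleq n$ (and dually), which follows from $j_!\dmix_t=\dmix_t$, $i_{s*}\uuE_{X_s}\la m\ra=\nmix_s\la m\ra$ together with the generation statements. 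The ``moreover'' clause of Part~\eqref{it:baric-tri} is then the usual uniqueness statement for triangles attached to an orthogonal pair.

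\emph{Part~\eqref{it:baric-comm}.} Finally, to see that all the $\beta_{\bleq n}$ and $\beta_{\bgeq m}$ commute, I would note $\Dmix_\scS(X,\E)_{\bleq n}\subset\Dmix_\scS(X,\E)_{\bleq n'}$ for $n\le n'$ and likewise for the $\bgeq$ categories, so the various subcategories are nested; for nested baric structures the truncation functors commute by a formal diagram chase — e.g.\ $\beta_{\bleq n}$ preserves $\Dmix_\scS(X,\E)_{\bgeq m}$ because $\beta_{\bleq n}$ is right adjoint to an inclusion and one checks on objects using uniqueness of the truncation triangle — and the commutation of a $\bleq$ with a $\bgeq$ truncation is the analogue of the commutation of $\tau_{\le a}$ and $\tau_{\ge b}$ for a t-structure. \textbf{The main obstacle} I anticipate is Part~\eqref{it:baric-trunc}: producing the truncation triangle for a general $M$ requires the inductive recollement argument and careful bookkeeping to confirm the two pieces genuinely lie in the respective subcategories, whereas Parts~\eqref{it:baric-orth}, \eqref{it:baric-tri}, \eqref{it:baric-comm} are essentially formal once the truncations exist.
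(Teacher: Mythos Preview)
Your proposal is correct, and for Parts~\eqref{it:baric-orth} and~\eqref{it:baric-comm} it matches the paper: orthogonality follows from the $\Ext$-vanishing between standards and costandards, and the remaining formal statements are deferred to general properties of baric structures (the paper cites~\cite[Propositions~2.2 and~2.3]{at}).

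Where you diverge from the paper is in the construction of the truncation triangle. You propose induction on the number of strata via recollement: truncate $j^*M$ on the open part, then correct over the closed stratum using the explicit split description there. This works (the ``correction'' step is an octahedral argument, and the facts you need---that $j_!$ preserves $\bleq n$ and $i_{s*}$, $j_*$ preserve both sides---follow directly from what these functors do to standard and costandard objects). The paper instead uses a one-shot rearrangement trick: since $C=\{\dmix_s\la m\ra : m\le 0\}\cup\{\nmix_s\la m\ra : m\ge 1\}$ generates, every $M$ lies in some iterated $*$-product $C_1[k_1]*\cdots*C_r[k_r]$ in the sense of~\cite[\S1.3.9]{bbd}; the vanishing $\Hom(\dmix_t\la a\ra[q],\nmix_s\la b\ra[p+1])=0$ for $a\le 0$, $b\ge 1$ gives $\nmix_s\la b\ra[p]*\dmix_t\la a\ra[q]\subset\dmix_t\la a\ra[q]*\nmix_s\la b\ra[p]$, so one can permute until all standard factors precede all costandard ones, and the resulting split of the $*$-product is the desired triangle. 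The paper's argument is shorter, avoids any induction or recollement, and is closer in spirit to mutation of exceptional collections; your approach, on the other hand, makes the compatibility with the six functors in Lemma~\ref{lem:baric-stability} essentially built in.
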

\begin{proof}
Part~\eqref{it:baric-orth} is immediate from the definitions and~\cite[Lemma~3.2]{modrap2}.  Next, we prove a weak version of part~\eqref{it:baric-tri}.  It is clear that the collection of objects
\[
C = \{ \dmix_s\la m\ra \mid m \le 0 \} \cup \{ \nmix_s\la m\ra \mid m \ge 1 \}
\]
generates $\Dmix_\scS(X,\E)$ as a triangulated category.  Let us express this another way, using the ``$\ast$'' notation of~\cite[\S1.3.9]{bbd}: for any object $M \in \Dmix_\scS(X,\E)$, there are objects $C_1, \ldots, C_n \in C$ and integers $k_1, \ldots, k_n$ such that
\begin{equation}\label{eqn:baric-trunc}
M \in C_1[k_1] \ast \cdots \ast C_n[k_n].
\end{equation}
Now, observe that if $a \le 0$ and $b \ge 1$, then $\nmix_s\la b\ra[p] \ast \dmix_t\la a\ra[q]$ contains only the object $\nmix_s\la b\ra[p] \oplus \dmix_t\la a\ra[q]$, because $\Hom(\dmix_t\la a\ra[q], \nmix_s\la b\ra[p+1]) = 0$.  Thus, 
\[
\nmix_s\la b\ra[p] \ast \dmix_t\la a\ra[q] \subset \dmix_t\la a\ra[q] \ast \nmix_s\la b\ra[p].
\]
Using this fact, we can rearrange the expression~\eqref{eqn:baric-trunc} so that the following holds: there is some $n' \leq n$ such that $C_1, \ldots, C_{n'}$ are all of the form $\dmix_s\la m\ra$ with $m \le 0$, while $C_{n'+1}, \ldots, C_n$ are of the form $\nmix_s\la m\ra$ with $m \ge 1$.  Then~\eqref{eqn:baric-trunc} says that there is a distinguished triangle
\[
A \to M \to B \to
\]
where $A \in C_1 \ast \cdots \ast C_{n'} \subset \Dmix_\scS(X,\E)_{\bleq 0}$, and $B \in C_{n'+1} \ast \cdots \ast C_n \subset \Dmix_\scS(X,\E)_{\bgeq 1}$.  We have not yet proved that this triangle is functorial.  However, we have shown that the collection of categories $(\{ \Dmix_\scS(X,\E)_{\bleq n}\}, \{\Dmix_\scS(X,\E)_{\bgeq n}\})_{n \in \Z}$ satisfies the axioms of a so-called \emph{baric structure}~\cite[Definition~2.1]{at}.  The remaining statements in the lemma are general properties of baric structures from~\cite[Propositions~2.2 \&~2.3]{at}.
\end{proof}

\begin{rmk}
\label{rmk:baric-morphisms}
If $M$ is an object of $\Dmix_\scS(X,\E)$, then $M$ is in $\Dmix_\scS(X,\E)_{\bleq 0}$ iff $\Hom(M,\nmix_s \la m \ra[k])=0$ for all $s \in \scS$, $k \in \Z$ and $m \in \Z_{>0}$. Indeed, the ``only if'' part follows from Lemma~\ref{lem:baric-basic}\eqref{it:baric-orth}. To prove the ``if'' part, consider the baric truncation triangle $\beta_{\bleq 0}M \to M \to \beta_{\bgeq 1}M \xrightarrow{[1]}$ of Lemma~\ref{lem:baric-basic}\eqref{it:baric-tri}. Our assumption implies that the second arrow in this triangle is trivial, hence we deduce an isomorphism $\beta_{\bleq 0}M \cong M \oplus \beta_{\bgeq 1}M[-1]$. If $\beta_{\bgeq 1}M$ were non zero, then the projection $\beta_{\bleq 0}M \to \beta_{\bgeq 1}M[-1]$ would be non zero, contradicting Lemma~\ref{lem:baric-basic}\eqref{it:baric-orth}.
\end{rmk}

\begin{lem}\label{lem:baric-stability}
Let $j: U \hookrightarrow X$ be the inclusion of an open union of strata, and let $i: Z \hookrightarrow X$ be the complementary closed inclusion.
\begin{enumerate}
\item $j^{*}$ and $i_{*}$ commute with all $\beta_{\bleq n}$ and $\beta_{\bgeq n}$.\label{it:bar-stab-exact}
\item $j_{!}$ sends $\Dmix_\scS(U,\E)_{\bleq n}$ to $\Dmix_\scS(X,\E)_{\bleq n}$, and $j_{*}$ sends $\Dmix_\scS(U,\E)_{\bgeq n}$ to $\Dmix_\scS(X,\E)_{\bgeq n}$.\label{it:bar-stab-j}
\item $i^{*}$ sends $\Dmix_\scS(X,\E)_{\bleq n}$ to $\Dmix_\scS(Z,\E)_{\bleq n}$, and $i^{!}$ sends $\Dmix_\scS(X,\E)_{\bgeq n}$ to $\Dmix_\scS(Z,\E)_{\bgeq n}$.\label{it:bar-stab-i}
\item $\D$ exchanges $\Dmix_\scS(X,\E)_{\bleq n}$ and $\Dmix_\scS(X,\E)_{\bgeq -n}$.\label{it:bar-stab-d}
\end{enumerate}
\end{lem}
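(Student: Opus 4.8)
The plan is to prove each of the four assertions in Lemma~\ref{lem:baric-stability} by the same kind of reasoning used for Lemma~\ref{lem:wt-stability}: identify which functors visibly preserve the generating objects of the baric subcategories, and obtain the rest by adjunction and duality. The key input is that, by definition, $\Dmix_\scS(X,\E)_{\bleq n}$ is generated by the $\dmix_s\la m\ra$ with $m\le n$ and $\Dmix_\scS(X,\E)_{\bgeq n}$ by the $\nmix_s\la m\ra$ with $m\ge n$, together with the recognition criterion of Remark~\ref{rmk:baric-morphisms} (and its evident dual): an object $M$ lies in $\Dmix_\scS(X,\E)_{\bleq n}$ iff $\Hom(M,\nmix_s\la m\ra[k])=0$ for all $s$, all $k$, and all $m>n$, and dually for $\bgeq n$.

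First I would handle part~\eqref{it:bar-stab-j}. For a locally closed inclusion $h$ of a union of strata with $h:X_s\hookrightarrow X$, one has $h_!\dmix^{X,\mix}_s = \dmix^{X,\mix}_s$ — more precisely, for the open inclusion $j:U\hookrightarrow X$, the standard object $\dmix_s$ on $U$ attached to a stratum $X_s\subset U$ is sent by $j_!$ to the standard object $\dmix_s$ on $X$ (this is part of the compatibilities in~\cite[\S2.5]{modrap2}). Since $j_!$ is triangulated and $\la 1\ra$-equivariant, it sends the generators $\dmix_s\la m\ra$ ($m\le n$) of $\Dmix_\scS(U,\E)_{\bleq n}$ into $\Dmix_\scS(X,\E)_{\bleq n}$, hence sends the whole subcategory there. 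The statement for $j_*$ and $\nmix_s$ is identical (or follows by duality once~\eqref{it:bar-stab-d} is known). Part~\eqref{it:bar-stab-i} then follows by adjunction: $i^*$ is left adjoint to $i_*$, and for $M\in\Dmix_\scS(X,\E)_{\bleq n}$ and any $s$ in $Z$, $m>n$, $k$, we have $\Hom(i^*M,\nmix_s\la m\ra[k])\cong\Hom(M,i_*\nmix_s\la m\ra[k])=\Hom(M,\nmix_s\la m\ra[k])=0$ using that $i_*$ sends the costandard object $\nmix_s$ on $Z$ to the costandard object $\nmix_s$ on $X$ and Lemma~\ref{lem:baric-basic}\eqref{it:baric-orth}; by the recognition criterion $i^*M\in\Dmix_\scS(Z,\E)_{\bleq n}$. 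The claim for $i^!$ and $\bgeq n$ is dual. Part~\eqref{it:bar-stab-d} is immediate: $\D$ is a triangulated antiequivalence with $\D\circ\la 1\ra \cong \la -1\ra\circ\D$ and $\D\dmix_s\cong\nmix_s$, so it carries the generators of $\Dmix_\scS(X,\E)_{\bleq n}$ to the generators of $\Dmix_\scS(X,\E)_{\bgeq -n}$ and conversely.

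Finally, for part~\eqref{it:bar-stab-exact}, commuting with the truncation functors is a formal consequence of the preservation statements plus uniqueness of baric truncation triangles (Lemma~\ref{lem:baric-basic}\eqref{it:baric-tri}): apply $j^*$ (resp.\ $i_*$) to the triangle $\beta_{\bleq n}M\to M\to\beta_{\bgeq n+1}M\xrightarrow{[1]}$; by parts~\eqref{it:bar-stab-i} and~\eqref{it:bar-stab-j} (for $i^*$) respectively~\eqref{it:bar-stab-j} and the analogue for $i_*$, the outer terms land in the appropriate baric subcategories, so the resulting triangle must be the baric truncation triangle for $j^*M$ (resp.\ $i_*M$), whence the canonical isomorphisms $j^*\beta_{\bleq n}M\cong\beta_{\bleq n}j^*M$ and $i_*\beta_{\bleq n}M\cong\beta_{\bleq n}i_*M$, and similarly for $\beta_{\bgeq n}$. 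The one point requiring a little care — the main (though still minor) obstacle — is checking that $i_*$ sends $\Dmix_\scS(Z,\E)_{\bleq n}$ into $\Dmix_\scS(X,\E)_{\bleq n}$ and likewise for $\bgeq n$, which is needed before one can run the uniqueness argument for $i_*$; this again reduces to the compatibility of $i_*$ with standard and costandard objects, so no genuine difficulty arises. I would write all of this compactly, treating the $\bleq$ cases in detail and invoking~\eqref{it:bar-stab-d} for the $\bgeq$ cases.
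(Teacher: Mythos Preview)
Your proposal is correct, but it takes a more roundabout path than the paper's proof, which is essentially a one-liner: the paper simply observes that $j^*$, $i_*$, $j_!$, and $i^*$ all send standard objects to standard objects (or to zero), while $j^*$, $i_*$, $j_*$, and $i^!$ all send costandard objects to costandard objects (or to zero), and that $\D$ exchanges the two classes. Since the baric subcategories are generated as triangulated categories by Tate twists of these objects, parts~\eqref{it:bar-stab-j}--\eqref{it:bar-stab-d} follow immediately, and part~\eqref{it:bar-stab-exact} follows because $j^*$ and $i_*$ preserve \emph{both} sides of the baric structure, so the uniqueness clause of Lemma~\ref{lem:baric-basic}\eqref{it:baric-tri} applies.

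The main difference is in your treatment of part~\eqref{it:bar-stab-i}: you argue via adjunction and the recognition criterion of Remark~\ref{rmk:baric-morphisms}, whereas the paper directly uses that $i^*\dmix_s$ is either $\dmix_s$ (if $X_s\subset Z$) or zero (if $X_s\subset U$). Both are valid, but the direct observation is cleaner and also immediately yields that $j^*$ preserves both $\bleq n$ and $\bgeq n$---a fact you need for part~\eqref{it:bar-stab-exact} but which your cross-references to ``parts~\eqref{it:bar-stab-i} and~\eqref{it:bar-stab-j}'' do not actually supply (those parts concern $j_!$, $j_*$, $i^*$, $i^!$, not $j^*$). You do catch this at the end for $i_*$, and the same remark fixes the $j^*$ case, so there is no real gap; but phrasing everything via ``these functors send standard/costandard objects to standard/costandard objects or zero'' from the outset would make the argument both shorter and more transparent.
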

\begin{proof}
For the first three parts, it suffices to observe that $j^{*}$, $i_{*}$, $j_{!}$, and $i^{*}$ send standard objects to standard objects (or to zero), while $j^{*}$, $i_{*}$, $j_{*}$, and $i^{!}$ send costandard objects to costandard objects (or to zero).  Similarly, the last part follows from the fact that $\D$ exchanges standard and costandard objects.
\end{proof}

\begin{lem}
\label{lem:beta-K-F}
The functors $\beta_{\bleq n}$ and $\beta_{\bgeq n}$ commute with $\K({-})$ and $\F({-})$.
\end{lem}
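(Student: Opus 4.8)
The plan is to deduce everything from the uniqueness clause in Lemma~\ref{lem:baric-basic}\eqref{it:baric-tri}. First I would record the relevant formal properties of extension of scalars: the functors $\K({-}) : \Dmix_\scS(X,\O) \to \Dmix_\scS(X,\K)$ and $\F({-}) : \Dmix_\scS(X,\O) \to \Dmix_\scS(X,\F)$ are triangulated, they commute with the autoequivalences $\{1\}$ and $[1]$ (hence with $\la 1 \ra$), and they send standard objects to standard objects and costandard objects to costandard objects (this is clear from the constructions of~\cite{modrap2}, since $\dmix_s = i_{s!}\uuE_{X_s}$ and $\nmix_s = i_{s*}\uuE_{X_s}$, extension of scalars commutes with $i_{s!}$ and $i_{s*}$, and it carries the normalized constant sheaf $\uuO_{X_s}$ to $\uuF_{X_s}$, resp.~to $\uuline{\K}{}_{X_s}$).

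From these facts I would deduce that, for every $n \in \Z$, the functor $\K({-})$ sends $\Dmix_\scS(X,\O)_{\bleq n}$ into $\Dmix_\scS(X,\K)_{\bleq n}$ and $\Dmix_\scS(X,\O)_{\bgeq n}$ into $\Dmix_\scS(X,\K)_{\bgeq n}$, and similarly for $\F({-})$. Indeed, a triangulated functor sends the triangulated subcategory generated by a family of objects into the triangulated subcategory generated by the images of those objects; here the images under $\K({-})$ of the generators $\dmix_s(\O)\la m\ra$ (with $m \le n$) of $\Dmix_\scS(X,\O)_{\bleq n}$ are exactly the generators $\dmix_s(\K)\la m\ra$ of $\Dmix_\scS(X,\K)_{\bleq n}$, by the previous paragraph, and likewise for $\F({-})$ and for the $\bgeq n$ subcategories (using the $\nmix_s$).

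Now fix $M \in \Dmix_\scS(X,\O)$ and apply the triangulated functor $\K({-})$ to the baric truncation triangle $\beta_{\bleq n}M \to M \to \beta_{\bgeq n+1}M \xrightarrow{[1]}$ of Lemma~\ref{lem:baric-basic}\eqref{it:baric-tri}. This gives a distinguished triangle
\[
\K(\beta_{\bleq n}M) \to \K(M) \to \K(\beta_{\bgeq n+1}M) \xrightarrow{[1]}
\]
in $\Dmix_\scS(X,\K)$ in which, by the previous step, the first term lies in $\Dmix_\scS(X,\K)_{\bleq n}$ and the third in $\Dmix_\scS(X,\K)_{\bgeq n+1}$. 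By the uniqueness assertion in Lemma~\ref{lem:baric-basic}\eqref{it:baric-tri}, there are canonical isomorphisms $\K(\beta_{\bleq n}M) \simto \beta_{\bleq n}(\K M)$ and $\K(\beta_{\bgeq n+1}M) \simto \beta_{\bgeq n+1}(\K M)$, compatible with the respective triangles; the same argument applies verbatim with $\F({-})$ in place of $\K({-})$. Since the statement is thus a formal consequence of the uniqueness of baric truncation, there is no real obstacle here; the only point needing a little care is the verification that extension of scalars preserves standard and costandard objects, which is what makes the baric subcategories stable.
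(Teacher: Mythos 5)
Your argument is correct and is essentially identical to the paper's proof: one observes that extension of scalars sends (co)standard objects to (co)standard objects, hence preserves the baric subcategories, and then concludes via the uniqueness statement in Lemma~\ref{lem:baric-basic}\eqref{it:baric-tri} applied to the image of the truncation triangle. No issues.
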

\begin{proof}
Since extension of scalars sends standard objects to standard objects and costandard objects to costandard objects, it is clear that $\F({-})$ sends $\Dmix_\scS(X,\O)_{\bleq n}$ to $\Dmix_\scS(X,\F)_{\bleq n}$ and $\Dmix_\scS(X,\O)_{\bgeq n}$ to $\Dmix_\scS(X,\F)_{\bgeq n}$, and similarly for $\K({-})$. Then the result follows from Lemma~\ref{lem:baric-basic}\eqref{it:baric-tri}.
\end{proof}

\begin{lem}\label{lem:beta-stratum}
Suppose $X = X_s$ consists of a single stratum.  Then the functors $\beta_{\bleq n}$ and $\beta_{\bgeq n}$ are t-exact for the perverse t-structure on $\Dmix_\scS(X_s,\E)$.  In fact, for $M \in \Dmix_\scS(X_s,\E)$, there exists a canonical isomorphism $M \cong \beta_{\bleq n}M \oplus \beta_{\bgeq n+1}M$.
\end{lem}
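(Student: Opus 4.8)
I would work entirely on the single stratum $X_s$ and exploit the explicit description of $\Dmix_\scS(X_s,\E)$ from Example~\ref{ex:wts-stratum}: every object $M$ splits as a finite direct sum $\bigoplus_{i,j}\uuline{M}{}^i_j\{j\}[-i]$ with each $M^i_j$ a finitely generated $\E$-module (canonically when $\E=\K$ or $\F$, only noncanonically when $\E=\O$). The first step is to record two containments: $\Dmix_\scS(X_s,\E)_{\bleq n}$ contains every summand $\uuline{M}{}^i_j\{j\}[-i]$ with $j\ge -n$, and $\Dmix_\scS(X_s,\E)_{\bgeq m}$ contains every such summand with $j\le -m$. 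Indeed $\Dmix_\scS(X_s,\E)_{\bleq n}$ contains each $\dmix_s\la a\ra=\uuE_{X_s}\{-a\}[a]$ with $a\le n$, hence, being triangulated, each $\uuE_{X_s}\{j\}[-i]$ with $j\ge -n$ and arbitrary $i$, and hence, by taking cones and using that every finitely generated $\E$-module has a free resolution of length ${}\le 1$, each $\uuline{M}{}^i_j\{j\}[-i]$ with $j\ge -n$; the claim for $\Dmix_\scS(X_s,\E)_{\bgeq m}$ follows from the same argument applied to the $\nmix_s\la a\ra=\uuE_{X_s}\{-a\}[a]$ with $a\ge m$ (compare Example~\ref{ex:baric-stratum}).

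\textbf{The splitting.} Set $M'=\bigoplus_{j\ge -n}\uuline{M}{}^i_j\{j\}[-i]$ and $M''=\bigoplus_{j\le -n-1}\uuline{M}{}^i_j\{j\}[-i]$, so that $M=M'\oplus M''$ with $M'\in\Dmix_\scS(X_s,\E)_{\bleq n}$ and $M''\in\Dmix_\scS(X_s,\E)_{\bgeq n+1}$. The inclusion of $M'$ together with the projection onto $M''$ form a split distinguished triangle $M'\to M\to M''\xrightarrow{0}$, and the uniqueness clause of Lemma~\ref{lem:baric-basic}\eqref{it:baric-tri} then identifies $M'\cong\beta_{\bleq n}M$ and $M''\cong\beta_{\bgeq n+1}M$, so $M\cong\beta_{\bleq n}M\oplus\beta_{\bgeq n+1}M$. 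To upgrade this to a \emph{canonical} isomorphism, independent of the noncanonical choice of graded pieces, I would prove that $\Hom(\beta_{\bgeq n+1}M_1,\beta_{\bleq n}M_2)=0$ for all $M_1,M_2$: via the above descriptions such a morphism is a morphism between objects supported in disjoint ranges of internal degree, and on the affine space $X_s$ any morphism $\uuline{N}\{j\}\to\uuline{N'}\{j'\}[k]$ with $j\ne j'$ vanishes, being a morphism of complexes whose components lie in $\Hom_{\Parity_{\scS}(X_s,\E)}(\uuE_{X_s}\{j\},\uuE_{X_s}\{j'\})=\Hom_{\Db_\scS(X_s,\E)}(\uuE_{X_s},\uuE_{X_s}\{j'-j\})=H^{j'-j}(X_s;\E)=0$ (as $X_s$ is contractible). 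Hence the retraction $M\to\beta_{\bleq n}M$ of the counit $\beta_{\bleq n}M\to M$ appearing in the split triangle is unique, and the resulting isomorphism $M\xrightarrow{\sim}\beta_{\bleq n}M\oplus\beta_{\bgeq n+1}M$ is accordingly natural in $M$.

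\textbf{$t$-exactness.} This is now formal: $\beta_{\bleq n}M\cong M'$ is a direct summand of $M$, and likewise $\beta_{\bgeq m}M\cong M''$ is a direct summand of $M$ for every $m$. In any $t$-structure the full subcategories $\{M\mid\pH^k(M)=0\text{ for }k>0\}$ and $\{M\mid\pH^k(M)=0\text{ for }k<0\}$ are closed under direct summands; hence the triangulated functors $\beta_{\bleq n}$ and $\beta_{\bgeq m}$ of Lemma~\ref{lem:baric-basic}\eqref{it:baric-trunc} send each of these subcategories into itself, and are therefore $t$-exact. Note that this last step never refers to the precise perverse $t$-structure, so it applies uniformly for $\E=\K$, $\O$, and $\F$.

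\textbf{Expected obstacle.} The only point that is not entirely mechanical is the $\E=\O$ case of the first step, where objects can carry torsion and the decomposition of Example~\ref{ex:wts-stratum} is merely noncanonical: one must know that each $\uuline{M}{}^i_j$ is represented (via the convention of~\cite[\S 3.5]{modrap2}, equivalently under the identification of the internal-degree-$0$ part of $\Dmix_\scS(X_s,\O)$ with the homotopy category of finite-rank free $\O$-modules) by a bounded complex of copies of $\uuE_{X_s}$, so that the ``cone'' argument goes through, and that the $\Hom$-vanishing of the second step is unaffected by torsion — which it is, since the relevant $\Hom$-groups of parity complexes supported in distinct internal degrees are already zero. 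Everything else is formal.
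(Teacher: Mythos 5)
Your proof is correct and follows essentially the same route as the paper's: decompose $M$ via Example~\ref{ex:wts-stratum} into the pieces with $j\ge -n$ and $j\le -n-1$, identify these with $\beta_{\bleq n}M$ and $\beta_{\bgeq n+1}M$ by the uniqueness clause of Lemma~\ref{lem:baric-basic}\eqref{it:baric-tri}, make the splitting canonical via the vanishing of $\Hom(\beta_{\bgeq n+1}M,\beta_{\bleq n}M)$, and deduce t-exactness because direct summands of perverse sheaves are perverse. You simply spell out in more detail the containments that the paper delegates to Example~\ref{ex:baric-stratum} and the Hom-vanishing (which indeed comes down to $H^{j'-j}(X_s;\E)=0$ for $j\ne j'$ on the affine space $X_s$), so there is nothing to correct.
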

\begin{proof}
Given $M \in \Dmix_\scS(X_s,\E)$, write a decomposition as in~\eqref{eqn:wts-stratum}, and form the distinguished triangle
\[
\bigoplus_{\substack{i \in \Z \\ j \ge -n}} \uuline{M}{}^i_j\{j\}[-i] \to M \to
\bigoplus_{\substack{i \in \Z \\ j \le -n - 1}} \uuline{M}{}^i_j\{j\}[-i] \xrightarrow{[1]}.
\]
Referring to Example~\ref{ex:baric-stratum}, we see that the first term belongs to $\Dmix_\scS(X_s,\E)_{\bleq n}$, and the third one to $\Dmix_\scS(X_s,\E)_{\bgeq n+1}$.  By Lemma~\ref{lem:baric-basic}\eqref{it:baric-tri}, this triangle must be canonically isomorphic to $\beta_{\bleq n}M \to M \to \beta_{\bgeq n+1} M \xrightarrow{[1]}$.  This triangle is clearly split.  Since $\Hom(\beta_{\bgeq n+1}M, \beta_{\bleq n}M)$ vanishes,
the splitting is canonical.  Finally, since any direct summand of a perverse sheaf is a perverse sheaf, the functors $\beta_{\bleq n}$ and $\beta_{\bgeq n}$ are t-exact.
\end{proof}

\subsection{A t-structure on $\Dmix_\scS(X,\E)^\circ$}
\label{ss:perv-circ}

In the following statement we use the notion of recollement from~\cite[\S 1.4]{bbd}.

\begin{prop}\label{prop:circ-recollement}
Let $j: U \hookrightarrow X$ be the inclusion of an open union of strata, and let $i: Z \hookrightarrow X$ be the complementary closed inclusion.  We have a recollement diagram
\[
\xymatrix@C=1.5cm{
\Dmix_\scS(Z,\E)^\circ \ar[r]^{i_*} &
\Dmix_\scS(X,\E)^\circ \ar[r]^{j^*} 
  \ar@/_1pc/[l]_{\beta_{\bgeq 0}i^{*}} \ar@/^1pc/[l]^{\beta_{\bleq 0}i^{!}} &
\Dmix_\scS(U,\E)^\circ. 
  \ar@/_1pc/[l]_{\beta_{\bgeq 0}j_{!}} \ar@/^1pc/[l]^{\beta_{\bleq 0}j_{*}}
}
\]
\end{prop}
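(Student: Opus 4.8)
The plan is to verify the axioms of a recollement for the six functors in the diagram, using the already-established recollement $(i_*, j^*, \text{etc.})$ on the full mixed categories together with the baric truncation functors $\beta_{\bleq 0}$ and $\beta_{\bgeq 0}$ from Lemma~\ref{lem:baric-basic}. First I would check that each of the displayed functors is well-defined, i.e.\ lands in the $\circ$-subcategory claimed. For $i_*$ and $j^*$ this is exactly Lemma~\ref{lem:baric-stability}\eqref{it:bar-stab-exact}, which says these functors commute with all $\beta_{\bleq n}$ and $\beta_{\bgeq n}$, hence preserve $\Dmix_\scS(-,\E)^\circ = \Dmix_\scS(-,\E)_{\bleq 0} \cap \Dmix_\scS(-,\E)_{\bgeq 0}$. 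For the four ``twisted'' functors, one combines the stability statements of Lemma~\ref{lem:baric-stability}\eqref{it:bar-stab-j} and~\eqref{it:bar-stab-i} with the truncation: for instance $j_!$ sends $\Dmix_\scS(U,\E)_{\bleq 0}$ into $\Dmix_\scS(X,\E)_{\bleq 0}$, so $\beta_{\bgeq 0} j_!$ applied to an object of $\Dmix_\scS(U,\E)^\circ$ gives an object which is in $\Dmix_\scS(X,\E)_{\bgeq 0}$ by construction and in $\Dmix_\scS(X,\E)_{\bleq 0}$ because $\beta_{\bgeq 0}$ is a truncation for the baric structure and $j_!$ of a $\bleq 0$ object is $\bleq 0$; the other three are analogous (using $\beta_{\bleq 0} j_*$ on $\bgeq 0$ objects, $\beta_{\bgeq 0} i^*$, and $\beta_{\bleq 0} i^!$).

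Next I would establish the adjunctions. The key point is that for $A \in \Dmix_\scS(X,\E)^\circ$ and $B \in \Dmix_\scS(U,\E)^\circ$, we have
\[
\Hom_{\Dmix_\scS(X,\E)^\circ}(j_! B, A) \cong \Hom_{\Dmix_\scS(X,\E)}(j_! B, A) \cong \Hom_{\Dmix_\scS(U,\E)}(B, j^* A),
\]
the first isomorphism because $\Dmix_\scS(X,\E)^\circ$ is a full subcategory, the second by the existing $(j_!, j^*)$ adjunction on the full categories. But I want a left adjoint to $j^*$ on the $\circ$-categories with value $\beta_{\bgeq 0} j_!$, so I would instead argue: $\Hom(\beta_{\bgeq 0} j_! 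B, A) \cong \Hom(j_! B, A)$ for $A \in \Dmix_\scS(X,\E)_{\bgeq 0}$, by the adjunction defining $\beta_{\bgeq 0}$ (Lemma~\ref{lem:baric-basic}\eqref{it:baric-trunc}), and then $\cong \Hom(B, j^* A)$. Similarly $\beta_{\bleq 0} j_*$ is right adjoint to $j^*$ on the $\circ$-categories, and $\beta_{\bgeq 0} i^*$, $\beta_{\bleq 0} i^!$ are the left and right adjoints of $i_*$ restricted to the $\circ$-categories. Here I would use that $i_*$ on $\Dmix_\scS(Z,\E)^\circ$ is fully faithful (inherited from $i_*$ being fully faithful on the full categories), which gives the recollement axiom that the composite along the bottom row is zero and that $i_*$ identifies $\Dmix_\scS(Z,\E)^\circ$ with a subcategory.

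Then I would verify the remaining recollement axioms: that $j^* i_* = 0$ (immediate, since $j^* i_* = 0$ already on the full categories and the $\circ$-functors are restrictions/truncations of these), that the adjunction units and counits $j_! j^* \to \id \to i_* i^*$ and $i_* i^! \to \id \to j_* j^*$ fit into distinguished triangles in $\Dmix_\scS(X,\E)^\circ$, and that the pairs of adjoints are fully faithful where required. The triangles are obtained by starting from the recollement triangles on $\Dmix_\scS(X,\E)$, say $j_! j^* M \to M \to i_* i^* M \xrightarrow{[1]}$, and applying $\beta_{\bgeq 0}$ (which is triangulated); for $M \in \Dmix_\scS(X,\E)^\circ$ one checks $\beta_{\bgeq 0} M \cong M$ and $\beta_{\bgeq 0} i_* i^* M \cong i_* \beta_{\bgeq 0} i^* M$ using Lemma~\ref{lem:baric-stability}\eqref{it:bar-stab-exact}, yielding the desired triangle $\beta_{\bgeq 0} j_! j^* M \to M \to i_* \beta_{\bgeq 0} i^* M \xrightarrow{[1]}$, and dually for the $*$-version.

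The main obstacle I anticipate is the bookkeeping around whether the truncated functors really are adjoint \emph{on the $\circ$-subcategories} rather than merely on the full categories — one must be careful that, e.g., in the isomorphism $\Hom(j_! B, A) \cong \Hom(\beta_{\bgeq 0} j_! B, A)$ the object $A$ genuinely lies in $\Dmix_\scS(X,\E)_{\bgeq 0}$ (true for $A \in \Dmix_\scS(X,\E)^\circ$) and that the $\bleq 0$ half of membership in the $\circ$-category is not destroyed by $\beta_{\bgeq 0}$. This last point is where Lemma~\ref{lem:baric-basic}\eqref{it:baric-comm} (commutation of the truncation functors) is essential: $\beta_{\bleq 0} \beta_{\bgeq 0} j_! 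B \cong \beta_{\bgeq 0} \beta_{\bleq 0} j_! B \cong \beta_{\bgeq 0} j_! B$ since $j_! B$ is already $\bleq 0$, so $\beta_{\bgeq 0} j_! B$ really is in $\Dmix_\scS(X,\E)^\circ$. Once this compatibility is nailed down, all the recollement axioms follow formally, and the bulk of the argument is a citation of Lemmas~\ref{lem:baric-basic} and~\ref{lem:baric-stability}.
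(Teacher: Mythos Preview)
Your proposal is correct and follows essentially the same approach as the paper: well-definedness via Lemma~\ref{lem:baric-stability}, adjunctions by composing the baric-truncation adjunctions with the existing $(j_!,j^*,j_*)$ and $(i^*,i_*,i^!)$ adjunctions, and the recollement triangles by applying $\beta_{\bgeq 0}$ (resp.\ $\beta_{\bleq 0}$) to the standard triangles on $\Dmix_\scS(X,\E)$ and invoking Lemma~\ref{lem:baric-stability}\eqref{it:bar-stab-exact}. The paper is slightly more explicit than you about the full-faithfulness axiom, checking directly that the counits/units such as $(\beta_{\bgeq 0}i^{*})i_* \to \id$ and $\id \to j^*(\beta_{\bgeq 0}j_!)$ are isomorphisms, but this follows immediately from the same commutation Lemma~\ref{lem:baric-stability}\eqref{it:bar-stab-exact} that you already cite.
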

\begin{proof}
The required adjunction properties for these functors, and the fact that $j^* i_*=0$, follow from the corresponding result for the mixed derived category; see~\cite[Proposition~2.3]{modrap2}.  Next, for $M \in \Dmix_\scS(Z,\E)^\circ$, consider the natural maps
\[
i^{*} i_*M \to (\beta_{\bgeq 0}i^{*}) i_*M \to M.
\]
It is easily checked that the composition is the morphism induced by adjunction, and so
is an isomorphism.  In particular, $i^{*}i_*M$ lies in $\Dmix_\scS(Z,\E)_{\bgeq 0}$, so the map $i^{*}i_*M \to \beta_{\bgeq 0}i^{*}i_*M$ is an isomorphism.  We conclude that the adjunction map $(\beta_{\bgeq 0}i^{*}) i_*M \to M$ is an isomorphism as well.  Similar arguments show that the adjunction morphisms $\id \to j^* (\beta_{\bgeq 0}j_{!})$, $\id \to (\beta_{\bleq 0}i^{!})i_*$, and $j^* (\beta_{\bleq 0}j_{*}) \to \id$ are isomorphisms.

Finally, given $M \in \Dmix_\scS(X,\E)^\circ$, form the triangle
$j_{!}j^*M \to M \to i_*i^{*}M \xrightarrow{[1]}$, and then apply $\beta_{\bgeq 0}$.  Using Lemma~\ref{lem:baric-stability}, we obtain a distinguished triangle
\[
(\beta_{\bgeq 0}j_{!}) j^* M \to M \to i_* (\beta_{\bgeq 0} i^{*}) M \xrightarrow{[1]}.
\]
Similar reasoning leads to the triangle $i_* (\beta_{\bleq 0}i^{!}) M \to M \to (\beta_{\bleq 0}j_{*}) j^*M \xrightarrow{[1]}$.
\end{proof}

\begin{prop}
The following two full subcategories of $\Dmix_\scS(X,\E)^\circ$ constitute a t-structure:
\begin{align*}
\Dmix_\scS(X,\E)^{\circ,\le 0} &=
\{ M \mid \text{$\beta_{\bgeq 0}i_s^*M \in \p\Dmix_\scS(X_s,\E)^{\le 0}$ for all $s \in \scS$} \}, \\
\Dmix_\scS(X,\E)^{\circ,\ge 0} &=
\{ M \mid \text{$\beta_{\bleq 0}i_s^!M \in \p\Dmix_\scS(X_s,\E)^{\ge 0}$ for all $s \in \scS$} \}.
\end{align*}
Moreover, if $\E = \K$ or $\F$, this t-structure is preserved by $\D$.
\end{prop}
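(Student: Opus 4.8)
The plan is to build this t-structure one stratum at a time, by induction on the cardinality of $\scS$, by gluing along the recollement of Proposition~\ref{prop:circ-recollement} in the sense of~\cite[\S 1.4]{bbd}. (This makes sense because $\Dmix_\scS(X,\E)^\circ$ is a triangulated subcategory of $\Dmix_\scS(X,\E)$, being the intersection of the triangulated subcategories $\Dmix_\scS(X,\E)_{\bleq 0}$ and $\Dmix_\scS(X,\E)_{\bgeq 0}$.) First I would treat the base case where $X = X_s$ is a single stratum. Here $i_s^*$ and $i_s^!$ are the identity functor, and every object of $\Dmix_\scS(X_s,\E)^\circ$ already lies in $\Dmix_\scS(X_s,\E)_{\bgeq 0} \cap \Dmix_\scS(X_s,\E)_{\bleq 0}$, so the composites $\beta_{\bgeq 0}i_s^*$ and $\beta_{\bleq 0}i_s^!$ restrict to the identity on $\Dmix_\scS(X_s,\E)^\circ$. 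Hence the two proposed subcategories are simply $\p\Dmix_\scS(X_s,\E)^{\le 0} \cap \Dmix_\scS(X_s,\E)^\circ$ and $\p\Dmix_\scS(X_s,\E)^{\ge 0} \cap \Dmix_\scS(X_s,\E)^\circ$. Combining Example~\ref{ex:baric-stratum} with its Verdier dual, an object of $\Dmix_\scS(X_s,\E)^\circ$ is a finite direct sum of objects of the form $\uuline{N}[-i]$ for finitely generated $\E$-modules $N$ and integers $i$, and perverse truncation merely deletes some of these summands; so the perverse truncation functors preserve $\Dmix_\scS(X_s,\E)^\circ$, and the t-structure axioms for the restriction are inherited from the perverse t-structure on $\Dmix_\scS(X_s,\E)$.

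For the inductive step, assume $|\scS| \ge 2$, choose a stratum $X_s$ that is closed in $X$, and set $Z = X_s$ and $U = X \smallsetminus X_s$, with inclusions $i$ and $j$. Proposition~\ref{prop:circ-recollement} provides a recollement of $\Dmix_\scS(X,\E)^\circ$ by $\Dmix_\scS(Z,\E)^\circ$ and $\Dmix_\scS(U,\E)^\circ$, whose six functors are $i_*$, $j^*$ and the adjoints $\beta_{\bgeq 0}i^*$, $\beta_{\bleq 0}i^!$, $\beta_{\bgeq 0}j_!$, $\beta_{\bleq 0}j_*$. By the base case and the inductive hypothesis the proposed subcategories define t-structures on the two outer categories, so gluing them along this recollement yields a t-structure on $\Dmix_\scS(X,\E)^\circ$ whose nonpositive part consists of the $M$ such that $j^*M \in \Dmix_\scS(U,\E)^{\circ,\le 0}$ and $\beta_{\bgeq 0}i^*M \in \Dmix_\scS(Z,\E)^{\circ,\le 0}$, and dually for the nonnegative part. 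It remains to identify this with the subcategory in the statement. Since the restriction of $j^*M$ to a stratum $X_t \subset U$ agrees with the restriction of $M$ to $X_t$, the first condition unwinds, by the inductive hypothesis applied to $U$, to the requirement that $\beta_{\bgeq 0}i_t^*M \in \p\Dmix_\scS(X_t,\E)^{\le 0}$ for all strata $X_t \subset U$; and since $Z = X_s$ is a single stratum (so that $i = i_s$), the second condition reads $\beta_{\bgeq 0}i_s^*M \in \p\Dmix_\scS(X_s,\E)^{\le 0}$ by the base case. Together these give exactly the condition indexed by all of $\scS$, and the nonnegative part is matched dually using $\beta_{\bleq 0}i^!$.

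Finally, for the stability under $\D$ when $\E = \K$ or $\F$, I would argue directly from the definition. In this case $\D$ exchanges $\p\Dmix_\scS(X_s,\E)^{\le 0}$ and $\p\Dmix_\scS(X_s,\E)^{\ge 0}$ on each stratum; moreover $\D i_s^* \cong i_s^! \D$, and Lemma~\ref{lem:baric-stability}\eqref{it:bar-stab-d} together with the uniqueness clause of Lemma~\ref{lem:baric-basic}\eqref{it:baric-tri} gives $\D \beta_{\bgeq 0} \cong \beta_{\bleq 0} \D$. Applying $\D$ to the defining condition of $\Dmix_\scS(X,\E)^{\circ,\le 0}$ therefore produces precisely the defining condition for $\D M$ to lie in $\Dmix_\scS(X,\E)^{\circ,\ge 0}$, so $\D$ interchanges the two subcategories, and in particular preserves the t-structure. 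The step I expect to be the main obstacle is the matching argument in the inductive step: because the recollement of Proposition~\ref{prop:circ-recollement} uses the composites $\beta_{\bgeq 0}i^*$ and $\beta_{\bleq 0}i^!$ rather than the bare (co)restriction functors, one must carefully use the compatibilities of baric truncation with these functors from Lemma~\ref{lem:baric-stability} to check that the glued description genuinely reduces to the stratum-by-stratum one, with no residual interaction between the baric structures on $Z$ and on $U$.
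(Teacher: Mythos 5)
Your proposal is correct and follows essentially the same route as the paper: a single-stratum base case followed by induction on the number of strata, gluing along the recollement of Proposition~\ref{prop:circ-recollement} via \cite[Th\'eor\`eme~1.4.10]{bbd}. The only cosmetic difference is that in the base case the paper simply invokes the t-exactness of $\beta_{\bleq 0}$ and $\beta_{\bgeq 0}$ from Lemma~\ref{lem:beta-stratum} (so that perverse truncation preserves $\Dmix_\scS(X_s,\E)^\circ$), whereas you unwind the direct-sum decomposition of Example~\ref{ex:wts-stratum} explicitly — equivalent, though for $\E=\O$ "deletes some summands" should be read loosely, since truncating $\uuline{N}[-i]$ can split off the torsion of $N$ rather than delete whole summands.
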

\begin{proof}
Let us first treat the special case where $X$ consists of a single stratum $X_s$.  In this case, the definition reduces to $\Dmix_\scS(X,\E)^{\circ,\le 0} = \Dmix_\scS(X,\E)^\circ \cap \p\Dmix_\scS(X_s,\E)^{\le 0}$ and $\Dmix_\scS(X,\E)^{\circ,\ge 0} = \Dmix_\scS(X,\E)^\circ \cap \p\Dmix_\scS(X_s,\E)^{\ge 0}$.  Because $\beta_{\bleq 0}$ and $\beta_{\bgeq 0}$ are t-exact here (see Lemma~\ref{lem:beta-stratum}), these categories do indeed constitute a t-structure on $\Dmix_\scS(X,\E)^\circ$.

The proposition now follows by induction on the number of strata in $X$ using general properties of recollement; see~\cite[Th{\'e}or{\`e}me~1.4.10]{bbd}.
\end{proof}

We denote the heart of this t-structure by
\[
\Perv^\circ_\scS(X,\E) := \Dmix_\scS(X,\E)^{\circ,\le 0} \cap \Dmix_\scS(X,\E)^{\circ,\ge 0}.
\]
We saw in the course of the proof that on a single stratum, we have $\Perv^\circ_\scS(X_s,\E) = \Perv_\scS^\mix(X_s,\E) \cap \Dmix_\scS(X_s,\E)^\circ$, but this does not necessarily hold for larger varieties.

For another description of this t-structure, we introduce the objects
\[
\Delta^\circ_s := \beta_{\bgeq 0}j_!\uuE_{X_s}
\qquad\text{and}\qquad
\nabla^\circ_s := \beta_{\bleq 0}j_*\uuE_{X_s}.
\]
By adjunction, we have
\begin{equation}\label{eqn:pervo-alt}
\begin{aligned}
\Dmix_\scS(X,\E)^{\circ,\le 0} &=
\{ M \mid \text{for all $s \in \scS$ and $k < 0$, $\Hom(M, \nabla^\circ_s[k]) = 0$} \}, \\
\Dmix_\scS(X,\E)^{\circ,\ge 0} &=
\{ M \mid \text{for all $s \in \scS$ and $k > 0$, $\Hom(\Delta^\circ_s[k], M) = 0$} \}.
\end{aligned}
\end{equation}
Note that by definition we have $\Delta^\circ_s \in \Dmix_\scS(X,\E)^{\circ,\le 0}$ and $\nabla^\circ_s \in \Dmix_\scS(X,\E)^{\circ,\ge 0}$, but it is not clear in general whether $\Delta^\circ_s$ and $\nabla^\circ_s$ belong to $\Perv^\circ_\scS(X,\E)$.  

Let $\oo H^i: \Dmix_\scS(X,\E)^\circ \to \Perv^\circ_\scS(X,\E)$ denote the $i$-th cohomology functor with respect to this t-structure.  For $s \in \scS$, we put
\[
\IC^\circ_s := \mathrm{im} \bigl( \oo H^0(\Delta^\circ_s) \to \oo H^0(\nabla^\circ_s) \bigr),
\]
where the map is induced by the natural map $\dmix_s \to \nmix_s$.  If $\E$ is a field, then $\Perv^\circ_\scS(X,\E)$ is a finite-length category, and its simple objects are precisely the objects $\IC^\circ_s$.  Moreover, in this case, these objects are preserved by $\D$.

\subsection{Quasihereditary structure}

The description in~\eqref{eqn:pervo-alt} matches the framework of~\cite[Proposition~2(c)]{bezru2}.  That statement (see also~\cite[Remark~1]{bezru2}) tells us that when $\E$ is a field, $\Perv^\circ_\scS(X,\E)$ always satisfies ungraded analogues of axioms \eqref{it:qh-def-fin}--\eqref{it:qh-def-ker} of Definition~\ref{defn:qhered} (with respect to the objects $\Delta^\circ_s$ and  $\nabla^\circ_s$).
Under additional assumptions, we can obtain finer information about this category.

\begin{lem}\label{lem:perv-circ-qher}
Assume that $\E = \F$ or $\K$, and that for all $s \in \scS$, $\Delta^\circ_s$ and  $\nabla^\circ_s$ lie in $\Perv^\mix_\scS(X,\E)$.
\begin{enumerate} 
\item The category $\Perv^\circ_\scS(X,\E)$ is quasihereditary, the $\Delta^\circ_s$ and the $\nabla^\circ_s$ being, respectively, the standard and costandard objects. Moreover, if $\sT^\circ_\scS(X,\E)$ denotes the category of tilting objects in $\Perv^\circ_\scS(X,\E)$, the natural functors
\[
\Kb\sT^\circ_\scS(X,\E) \to \Db\Perv^\circ_\scS(X,\E) \to \Dmix_\scS(X,\E)^\circ
\]
are equivalences of categories. \label{it:pervo-std-costd}
\item We have $\Perv^\circ_\scS(X,\E) \subset \Perv^\mix_\scS(X,\E)$, and the inclusion functor $\Dmix_\scS(X,\E)^\circ \to \Dmix_\scS(X,\E)$ is t-exact.\label{it:pervo-incl}
\item If the objects $\cE^\mix_s$ are perverse, then they lie in $\Perv^\circ_\scS(X,\E)$, and they are precisely the indecomposable tilting objects therein.\label{it:pervo-tilt}
\end{enumerate}
\end{lem}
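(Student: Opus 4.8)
The plan is to treat the three parts essentially in order, bootstrapping from the special case of a single stratum and the recollement of Proposition~\ref{prop:circ-recollement}. For part~\eqref{it:pervo-std-costd}, I would verify that $\Perv^\circ_\scS(X,\E)$ satisfies the remaining quasihereditary axioms beyond those already supplied by~\cite[Proposition~2(c)]{bezru2}. The key input is that, under the hypothesis that $\Delta^\circ_s$ and $\nabla^\circ_s$ are honest mixed perverse sheaves, the $\Ext$-computations can be carried out inside $\Perv^\mix_\scS(X,\E)$, where we already know a graded quasihereditary structure exists. Concretely, I expect one checks $\Hom(\Delta^\circ_s, \nabla^\circ_t) = \bk$ if $s = t$ and $0$ otherwise, and $\Ext^1(\Delta^\circ_s, \nabla^\circ_t) = 0$, by relating these groups to $\Hom$ and $\Ext$ between $\dmix_s$ and $\nmix_t$ via the baric truncation triangles and the orthogonality of Lemma~\ref{lem:baric-basic}\eqref{it:baric-orth}; the key point is that the "error terms" introduced by $\beta_{\bleq 0}$, $\beta_{\bgeq 0}$ land in the orthogonal baric pieces and hence contribute nothing. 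Once the quasihereditary axioms hold, the equivalences $\Kb\sT^\circ_\scS(X,\E) \to \Db\Perv^\circ_\scS(X,\E) \to \Dmix_\scS(X,\E)^\circ$ follow from the general theory of exceptional sequences / tilting, exactly as in Lemma~\ref{lem:qher-closed-subset}(3): one shows $\{\Delta^\circ_s\}$ is a (graded) exceptional set generating $\Dmix_\scS(X,\E)^\circ$, deduces the first functor is an equivalence by a standard tilting argument (Ringel), and the second by comparing $\Ext$-groups as in the proof of Lemma~\ref{lem:qher-pos-ff}.

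For part~\eqref{it:pervo-incl}, I would argue by induction on the number of strata using the recollement of Proposition~\ref{prop:circ-recollement}, together with the base case from Lemma~\ref{lem:beta-stratum} (where $\beta_{\bleq 0}$, $\beta_{\bgeq 0}$ are t-exact, so $\Perv^\circ_\scS(X_s,\E) = \Perv^\mix_\scS(X_s,\E) \cap \Dmix_\scS(X_s,\E)^\circ$ sits inside $\Perv^\mix_\scS(X_s,\E)$ with t-exact inclusion). The inductive step: pick a closed stratum, use that $i_*$, $j^*$ are t-exact for both the $\oo H$-t-structure and the perverse t-structure and interact compatibly with the recollements on both sides, so t-exactness of the inclusion $\Dmix_\scS(X,\E)^\circ \to \Dmix_\scS(X,\E)$ can be checked stratum-by-stratum — and on each stratum it holds by the base case. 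That $\Perv^\circ \subset \Perv^\mix$ then follows because the inclusion is t-exact and sends the heart into the heart.

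For part~\eqref{it:pervo-tilt}: assuming the $\cE^\mix_s$ are perverse, I would first note $\cE^\mix_s \in \Dmix_\scS(X,\E)^\circ$ since it is pure of weight $0$ (it is a single parity complex with no shifts), hence lies in both $\Dmix_\scS(X,\E)_{\bleq 0}$ and $\Dmix_\scS(X,\E)_{\bgeq 0}$ — here I would use the characterization of the baric categories via $\Hom$-vanishing against $\nmix_s\la m\ra$, $\dmix_s\la m\ra$, combined with the weight characterizations in \S\ref{ss:weights}; then being perverse and in $\Dmix_\scS(X,\E)^\circ$, and with $i_s^*\cE^\mix_s$, $i_s^!\cE^\mix_s$ being direct sums of $\uuE_{X_s}\{m\}$ which are tilting in $\Perv^\circ_\scS(X_s,\E)$, one concludes $\cE^\mix_s$ has both a $\Delta^\circ$- and a $\nabla^\circ$-filtration by the stratum-by-stratum criterion for (co)standard filtrations, hence is tilting in $\Perv^\circ_\scS(X,\E)$; indecomposability of $\cE^\mix_s$ and the fact that its "leading term" is $\uuE_{X_s}$ identify it with the indecomposable tilting object $\cT^\circ_s$, and since the parity sheaves are parametrized by $\scS$ (up to shift) they exhaust all indecomposable tiltings. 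The main obstacle is part~\eqref{it:pervo-std-costd}: carefully bookkeeping the baric truncations in the $\Ext^{\le 1}(\Delta^\circ_s, \nabla^\circ_t)$ computation so as to reduce everything to known vanishing for $\dmix$, $\nmix$ in $\Perv^\mix_\scS(X,\E)$, and verifying that the exceptional-sequence machinery of~\cite{bezru,bezru2} applies verbatim to produce the two equivalences.
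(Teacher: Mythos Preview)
Your plan for parts~\eqref{it:pervo-std-costd} and~\eqref{it:pervo-tilt} is broadly aligned with the paper's, though with some differences in emphasis. For~\eqref{it:pervo-std-costd}, the paper first observes that the hypothesis $\Delta^\circ_s,\nabla^\circ_s\in\Perv^\mix_\scS(X,\E)$ forces all negative $\Ext$'s among them to vanish, so by~\eqref{eqn:pervo-alt} they lie in $\Perv^\circ_\scS(X,\E)$; it then adapts the argument of~\cite[Lemma~3.2]{modrap2} to get $\Hom_{\Dmix_\scS(X,\E)^\circ}(\Delta^\circ_s,\nabla^\circ_t[i])=0$ for $i\ne 0$ and invokes~\cite[Proposition~3.10, Lemma~3.14]{modrap2}. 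Your route via baric truncation triangles is a reasonable alternative and should lead to the same conclusion. For~\eqref{it:pervo-tilt}, one small caution: ``pure of weight~$0$'' is \emph{not} the same as lying in $\Dmix_\scS(X,\E)^\circ$; every parity sheaf is pure of weight~$0$, but $\cE^\mix_s\in\Dmix_\scS(X,\E)_{\bleq 0}$ genuinely requires the perversity hypothesis (to kill $\Hom(\cE^\mix_s,\nmix_t\{n\})$ for $n<0$). You do eventually invoke this, but your initial ``hence'' from purity alone is a non sequitur.

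There is a genuine gap in your plan for~\eqref{it:pervo-incl}. The two recollements are \emph{not} compatible in the way your argument needs: the perverse t-structure on $\Dmix_\scS(X,\E)$ is glued using the adjoints $i^*,i^!$, whereas the t-structure on $\Dmix_\scS(X,\E)^\circ$ is glued using the \emph{twisted} adjoints $\beta_{\bgeq 0}i^*,\beta_{\bleq 0}i^!$ (Proposition~\ref{prop:circ-recollement}). Concretely, if $M\in\Dmix_\scS(X,\E)^{\circ,\le 0}$ then you know $\beta_{\bgeq 0}i_t^*M\in\p\Dmix_\scS(X_t,\E)^{\le 0}$, but to conclude $M\in\p\Dmix_\scS(X,\E)^{\le 0}$ you would need $i_t^*M\in\p\Dmix_\scS(X_t,\E)^{\le 0}$, i.e.\ also $\beta_{\bleq -1}i_t^*M\in\p\Dmix_\scS(X_t,\E)^{\le 0}$, and there is no mechanism in your argument to control this summand. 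Indeed, your plan for~\eqref{it:pervo-incl} never invokes the hypothesis that the $\Delta^\circ_s,\nabla^\circ_s$ are perverse, which should already be a warning sign: without that hypothesis the conclusion can fail.

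The paper instead argues directly that each simple object $\IC^\circ_s$ lies in $\Perv^\mix_\scS(X,\E)$, by induction on the number of strata removing an \emph{open} stratum $X_s$. For $t\ne s$, $\IC^\circ_t$ is supported on the smaller variety and is perverse by induction. For $\IC^\circ_s$ itself, the kernel $K$ of $\Delta^\circ_s\twoheadrightarrow\IC^\circ_s$ (taken in $\Perv^\circ$) is supported on $X\smallsetminus X_s$, hence perverse by induction; since $\Delta^\circ_s$ is perverse by hypothesis, the triangle $K\to\Delta^\circ_s\to\IC^\circ_s\xrightarrow{[1]}$ gives $\IC^\circ_s\in\p\Dmix_\scS(X,\E)^{\le 0}$, and Verdier self-duality of $\IC^\circ_s$ gives the other bound. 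This is where the hypothesis actually enters.
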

\begin{proof}
If all the objects $\Delta^\circ_s$ and $\nabla^\circ_s$ lie in $\Perv^\mix_\scS(X,\E)$, then there are no nonvanishing negative-degree $\Ext$-groups among them, so we see from~\eqref{eqn:pervo-alt} that these objects lie in $\Perv^\circ_\scS(X,\E)$.  Next, the proof of \cite[Lemma~3.2]{modrap2} is easily adapted to show that for any $s, t \in \scS$, we have
\[
\Hom_{\Dmix_\scS(X,\E)^\circ}(\Delta^\circ_s, \nabla^\circ_t[i]) = 0
\qquad\text{if $i \ne 0$.}
\]
With these observations in hand, the rest of the proof of part~\eqref{it:pervo-std-costd} is essentially identical to that of \cite[Proposition~3.10 and Lemma~3.14]{modrap2}.

We prove part~\eqref{it:pervo-incl} by induction on the number of strata in $X$.  If $X$ consists of a single stratum, the statement holds trivially.

Otherwise, choose an open stratum $X_s \subset X$.  It suffices to prove that every simple object of $\Perv^\circ_\scS(X,\E)$ lies in $\Perv^\mix_\scS(X,\E)$.  For $t \ne s$, the object $\IC^\circ_t$ is supported on the smaller variety $X \smallsetminus X_s$, so we know by induction that it lies in $\Perv^\mix_\scS(X,\E)$.  It remains to consider $\IC^\circ_s$.  Let $K$ be the kernel of the natural map $\Delta^\circ_s \to \IC^\circ_s$.  Since $K$ is also supported on $X \smallsetminus X_s$, we know that $K \in \Perv^\mix_\scS(X,\E)$.  By assumption, $\Delta^\circ_s \in \Perv^\mix_\scS(X,\E)$, so by considering the distinguished triangle $K \to \Delta^\circ_s \to \IC^\circ_s \xrightarrow{[1]}$, we see that $\IC^\circ_s \in \p\Dmix_\scS(X,\E)^{\le 0}$.  Since $\D(\IC^\circ_s) \cong \IC_s^\circ$, this object also lies in $\p\Dmix_\scS(X,\E)^{\ge 0}$, and hence in $\Perv^\mix_\scS(X,\E)$, as desired.

Finally, we consider part~\eqref{it:pervo-tilt}.  We claim that $\Hom(\cE^\mix_s, \nmix_t\{ n\}[k]) = 0$ for all $n < 0$.  When $k = 0$, this follows from the assumption that $\cE^\mix_s$ is perverse, and when $k \ne 0$, it follows from the same arguments as for Lemma~\ref{lem:weights-dmix-nmix}.
Thus, $\cE^\mix_s$ lies in $\Dmix_\scS(X,\E)_{\bleq 0}$.  Since $\D(\cE^\mix_s) \cong \cE^\mix_s$, this object also lies in $\Dmix_\scS(X,\E)_{\bgeq 0}$, hence in $\Dmix_\scS(X,\E)^\circ$.  Similar arguments show that
\[
\Hom(\cE^\mix_s, \nmix_t[k]) \cong \Hom(\cE^\mix_s,\nabla^\circ_t[k])
\]
vanishes for $k > 0$.  That condition and its dual together imply that $\cE^\mix_s$ belongs to $\Perv^\circ_\scS(X,\E)$ and is a tilting object therein, by, say, the criterion in~\cite[Lemma~4]{bezru2}.  The $\cE^\mix_s$ are indecomposable and parametrized by $\scS$, so they must coincide with the indecomposable tilting objects of $\Perv^\circ_\scS(X,\E)$.
\end{proof}

\subsection{A first positivity criterion}

We conclude this section with a result collecting a number of conditions equivalent to $\Perv^\mix_\scS(X,\E)$ being positively graded.  The proof makes use of Verdier duality, but no other tools coming from geometry.  Indeed, if $\cA$ is any graded quasihereditary category equipped with an antiautoequivalence satisfying similar formal properties to $\D$, one can formulate an analogue of the following proposition for $\Db(\cA)$.  The argument below will go through essentially verbatim.

\begin{prop}\label{prop:perv-pos}
Assume that $\E = \F$ or $\K$.  The following are equivalent:
\begin{enumerate}
\item The category $\Perv^\mix_\scS(X,\E)$ is positively graded.\label{it:perv-pos}
\item We have $[\dmix_s : \IC^\mix_t\la n\ra] = 0$ if $n > 0$.\label{it:delta-compos}
\item We have $(\cP^\mix_s : \dmix_t\la n\ra) = 0$ if $n > 0$.\label{it:proj-delta-filt}
\item We have $\IC^\mix_s \in \Dmix_\scS(X,\E)^\circ$ for all $s \in \scS$.\label{it:ic-mix-pure}
\item For all $n \in \Z$, the functors $\beta_{\bleq n}$ and $\beta_{\bgeq n}$ are t-exact for the perverse t-structure on $\Dmix_\scS(X,\E)$.\label{it:beta-exact}
\item We have $\IC^\circ_s \cong \IC^\mix_s$ for all $s \in \scS$.\label{it:ic-circ-mix}
\end{enumerate}
Moreover, if these conditions hold, then $\Perv^\circ_\scS(X,\E)$ can be identified with the Serre subcategory of $\Perv^\mix_\scS(X,\E)$ generated by all the $\IC^\mix_s$ (without Tate twists).
\end{prop}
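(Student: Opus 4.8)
The plan is to prove the proposition in two stages. Conditions~\eqref{it:perv-pos}, \eqref{it:delta-compos} and~\eqref{it:proj-delta-filt} involve only the graded quasihereditary structure of $\Perv^\mix_\scS(X,\E)$, so their equivalence is formal. Indeed, Proposition~\ref{prop:pos-qher}, applied with $\cA = \Perv^\mix_\scS(X,\E)$ (whose simple, standard and costandard objects are $\IC^\mix_s$, $\dmix_s$, $\nmix_s$), shows that~\eqref{it:perv-pos} is equivalent to the conjunction of~\eqref{it:delta-compos} and~\eqref{it:proj-delta-filt}. To see that each of~\eqref{it:delta-compos} and~\eqref{it:proj-delta-filt} already implies~\eqref{it:perv-pos}, I would invoke Verdier duality: for $\E$ a field, $\D$ preserves composition-series multiplicities, exchanges $\dmix_s$ with $\nmix_s$, fixes each $\IC^\mix_s$ up to isomorphism, and sends $\la n \ra$ to $\la -n\ra$. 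Thus~\eqref{it:delta-compos} is equivalent to $[\nmix_s\la n\ra : \IC^\mix_t] = 0$ for $n > 0$, so~\eqref{it:delta-compos} supplies condition~\eqref{it:pos-delta-nabla} of Proposition~\ref{prop:pos-qher} and hence~\eqref{it:perv-pos}; and, using the reciprocity formula~\eqref{eqn:reciprocity} together with the same duality, \eqref{it:proj-delta-filt} is seen to be equivalent to~\eqref{it:delta-compos}.

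The second stage links these to the ``baric'' conditions~\eqref{it:ic-mix-pure}, \eqref{it:beta-exact}, \eqref{it:ic-circ-mix} via the cycle \eqref{it:delta-compos}$\Rightarrow$\eqref{it:ic-mix-pure}$\Rightarrow$\eqref{it:beta-exact}$\Rightarrow$\eqref{it:delta-compos}, supplemented by \eqref{it:beta-exact}$\Rightarrow$\eqref{it:ic-circ-mix}$\Rightarrow$\eqref{it:ic-mix-pure}. For \eqref{it:delta-compos}$\Rightarrow$\eqref{it:ic-mix-pure} I would argue by induction on the number of strata: if $X_s$ is closed then $\IC^\mix_s = \dmix_s = \nmix_s \in \Dmix_\scS(X,\E)^\circ$, while in general, in the exact sequence $0 \to K \to \dmix_s \to \IC^\mix_s \to 0$, the perverse sheaf $K$ is supported off $X_s$ and by~\eqref{it:delta-compos} has composition factors $\IC^\mix_t\la n\ra$ with $n \le 0$, so by induction and stability of the triangulated subcategory $\Dmix_\scS(X,\E)_{\bleq 0}$ under $\la n\ra$ ($n \le 0$) and under extensions we get $K \in \Dmix_\scS(X,\E)_{\bleq 0}$, whence $\IC^\mix_s \in \Dmix_\scS(X,\E)_{\bleq 0}$; applying $\D$ (Lemma~\ref{lem:baric-stability}\eqref{it:bar-stab-d}) and $\D\IC^\mix_s \cong \IC^\mix_s$ then gives $\IC^\mix_s \in \Dmix_\scS(X,\E)^\circ$. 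For \eqref{it:ic-mix-pure}$\Rightarrow$\eqref{it:beta-exact} it suffices to show $\beta_{\bleq n}$ (and dually $\beta_{\bgeq n}$) sends $\Perv^\mix_\scS(X,\E)$ into itself, since a triangulated functor preserving the heart is automatically t-exact; arguing by induction on composition length and applying $\beta_{\bleq n}$ to the defining triangles, one reduces to a single $\IC^\mix_t\la j\ra$, where~\eqref{it:ic-mix-pure} forces $\beta_{\bleq n}(\IC^\mix_t\la j\ra)$ to be $\IC^\mix_t\la j\ra$ if $j \le n$ and $0$ if $j > n$, and the inductive step is the long exact sequence of perverse cohomology attached to the image of a composition-series triangle. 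Finally, for \eqref{it:beta-exact}$\Rightarrow$\eqref{it:delta-compos}, applying the now t-exact $\beta_{\bgeq 1}$ to a composition series of $\dmix_s \in \Dmix_\scS(X,\E)_{\bleq 0}$ shows $\beta_{\bgeq 1}$ kills every composition factor $\IC^\mix_t\la n\ra$ of $\dmix_s$; but for $n \ge 1$ this would place $\IC^\mix_t$ in $\Dmix_\scS(X,\E)_{\bleq -1}\cap\Dmix_\scS(X,\E)_{\bgeq 1}$ (using self-duality), which vanishes by Lemma~\ref{lem:baric-basic}\eqref{it:baric-orth}.

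For the remaining implications, assume~\eqref{it:beta-exact}. Then $\Delta^\circ_s = \beta_{\bgeq 0}\dmix_s$ and $\nabla^\circ_s = \beta_{\bleq 0}\nmix_s$ are perverse, so Lemma~\ref{lem:perv-circ-qher} applies: $\Perv^\circ_\scS(X,\E)$ is quasihereditary with standard and costandard objects $\Delta^\circ_s$, $\nabla^\circ_s$ and embeds t-exactly in $\Perv^\mix_\scS(X,\E)$. The baric truncation triangles exhibit $\Delta^\circ_s$ as a quotient of $\dmix_s$ (with head $\IC^\mix_s$) and $\nabla^\circ_s$ as a subobject of $\nmix_s$ (with socle $\IC^\mix_s$), and since $\Hom(\Delta^\circ_s,\nabla^\circ_s)$ is one-dimensional the map defining $\IC^\circ_s$ factors as $\Delta^\circ_s \twoheadrightarrow \IC^\mix_s \hookrightarrow \nabla^\circ_s$, so $\IC^\circ_s \cong \IC^\mix_s$, which is~\eqref{it:ic-circ-mix}. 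Conversely, \eqref{it:ic-circ-mix} gives $\IC^\mix_s \cong \IC^\circ_s \in \Perv^\circ_\scS(X,\E) \subseteq \Dmix_\scS(X,\E)^\circ$, i.e.~\eqref{it:ic-mix-pure}. For the last assertion, when these conditions hold we have $\Perv^\circ_\scS(X,\E) \subseteq \Perv^\mix_\scS(X,\E)$ by Lemma~\ref{lem:perv-circ-qher}\eqref{it:pervo-incl}, its simple objects are the $\IC^\mix_s$ by~\eqref{it:ic-circ-mix}, and it is extension-closed because $\Dmix_\scS(X,\E)^\circ$ is; moreover, using the t-exact functors $\beta_{\bgeq 1}$ and $\beta_{\bleq -1}$ one checks that a sub- or quotient object in $\Perv^\mix_\scS(X,\E)$ of an object of $\Dmix_\scS(X,\E)^\circ$ again lies in $\Dmix_\scS(X,\E)^\circ$, so $\Perv^\circ_\scS(X,\E)$ is a Serre subcategory of $\Perv^\mix_\scS(X,\E)$, necessarily the one generated by the $\IC^\mix_s$.

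I expect the main obstacle to be the passage between the combinatorial conditions~\eqref{it:perv-pos}--\eqref{it:proj-delta-filt} and the geometric ones, namely the implications \eqref{it:delta-compos}$\Rightarrow$\eqref{it:ic-mix-pure} and \eqref{it:ic-mix-pure}$\Rightarrow$\eqref{it:beta-exact}: a priori the baric subcategories $\Dmix_\scS(X,\E)_{\bleq n}$ are compatible neither with the perverse t-structure nor with passage to composition factors, so the argument must thread information between the ``internal grading'' and ``baric truncation'' pictures, relying on Verdier self-duality of the $\IC^\mix_s$ and an induction over the strata.
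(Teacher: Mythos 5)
Your proof is correct and follows the same overall architecture as the paper: the formal equivalence of (1)--(3) via Proposition~\ref{prop:pos-qher}, Verdier duality and reciprocity, then a cycle through the baric conditions (4)--(6). Two individual implications are handled differently. For the entry into the baric world, the paper proves (1)$\Rightarrow$(4) by resolving $\IC^\mix_s$ by projectives $\cP^\mix_t\la n\ra$ with $n\le 0$, each of which lies in $\Dmix_\scS(X,\E)_{\bleq 0}$ by (3); you instead prove (2)$\Rightarrow$(4) by induction over the strata using the kernel of $\dmix_s\to\IC^\mix_s$ and stability of $\Dmix_\scS(X,\E)_{\bleq 0}$ under extensions and nonpositive twists. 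Both work; yours avoids the existence of projective resolutions but requires the (routine) compatibility of the baric structure with closed embeddings. For closing the cycle, the paper goes (6)$\Rightarrow$(1) via the orthogonality $\Ext^1(\IC^\mix_s,\IC^\mix_t\la n\ra)=0$ for $n>0$, whereas you go (5)$\Rightarrow$(2) by letting the t-exact $\beta_{\bgeq 1}$ kill the composition factors of $\dmix_s$ and using self-duality of $\IC^\mix_t$ to force $n\le 0$; again both are valid, and your treatment of (5)$\Rightarrow$(6) and of the final Serre-subcategory assertion matches the paper's in substance.
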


\begin{rmk}
\label{rmk:dcirc-perverse}
The last assertion says that when the above conditions hold, we are in the setting of Proposition~\ref{prop:parshall-scott}; in this case the two definitions of $\Delta^\circ_s$ and of $\nabla^\circ_s$ coincide. Moreover, under this assumption all the objects $\Delta^\circ_s$ and $\nabla^\circ_s$ lie in $\Perv^\mix_\scS(X,\E)$, so the conclusions of Lemma~\ref{lem:perv-circ-qher} hold as well.
\end{rmk}

\begin{proof}
\eqref{it:perv-pos}${}\Longleftrightarrow{}$\eqref{it:delta-compos}${}\Longleftrightarrow{}$\eqref{it:proj-delta-filt}.  We saw in Proposition~\ref{prop:pos-qher} that~\eqref{it:perv-pos} holds if and only if both~\eqref{it:delta-compos} and~\eqref{it:proj-delta-filt} hold.  But by Verdier duality,~\eqref{it:delta-compos} holds if and only if $[\nmix_s\la n\ra : \IC^\mix_t] = 0 $ for all $n > 0$.  By the reciprocity formula, the latter is equivalent to~\eqref{it:proj-delta-filt}.

\eqref{it:perv-pos}${}\Longrightarrow{}$\eqref{it:ic-mix-pure}.  
As observed in the proof of~\eqref{eqn:pos-higher-ext}, $\IC^\mix_s$ admits a finite resolution
$\cdots \to P^{-1} \to P^0$ such that every term $P^i$ is a direct sum of objects of the form $\cP^\mix_t\la n\ra$ with $n \le 0$. 
Using~\eqref{it:proj-delta-filt}, we see that every term of this projective resolution lies in $\Dmix_\scS(X,\E)_{\bleq 0}$, so $\IC^\mix_s \in \Dmix_\scS(X,\E)_{\bleq 0}$ as well.  Since $\IC^\mix_s$ is stable under Verdier duality $\D$, we also have $\IC^\mix_s \in \Dmix_\scS(X,\E)_{\bgeq 0}$.

\eqref{it:ic-mix-pure}${}\Longrightarrow{}$\eqref{it:beta-exact}.  The assumption implies that
\begin{equation}\label{eqn:beta-bleq-formula}
\beta_{\bleq n}(\IC^\mix_s\la m\ra) \cong
\begin{cases}
\IC^\mix_s\la m\ra & \text{if $m \le n$,} \\
0 & \text{if $m > n$,}
\end{cases}
\end{equation}
along with a similar formula for $\beta_{\bgeq n}$.  Since $\beta_{\bleq n}$ and $\beta_{\bgeq n}$ send every simple object of $\Perv^\mix_\scS(X,\E)$ to an object of $\Perv^\mix_\scS(X,\E)$, they are both t-exact.

\eqref{it:beta-exact}${}\Longrightarrow{}$\eqref{it:ic-circ-mix}. First we note that, if~\eqref{it:beta-exact} holds, then the assumptions of Lemma~\ref{lem:perv-circ-qher} are satisfied. 
Consider the distinguished triangle
\[
\beta_{\bleq -1}\IC^\mix_s \to \IC^\mix_s \to \beta_{\bgeq 0}\IC^\mix_s \xrightarrow{[1]}.
\]
Since $\beta_{\bleq -1}$ and $\beta_{\bgeq 0}$ are exact, this is actually a short exact sequence in $\Perv^\mix_\scS(X,\E)$.  The middle term is simple, so either the first or last term must vanish.  The nonzero morphism $\IC^\mix_s \to \nmix_s$ shows that $\beta_{\bgeq 0}\IC^\mix_s \ne 0$.  Thus, we have $\beta_{\bleq -1}\IC^\mix_s = 0$, and $\IC^\mix_s \cong \beta_{\bgeq 0}\IC^\mix_s$.  A dual argument shows that we actually have
\[
\IC^\mix_s \cong \beta_{\bleq 0}\beta_{\bgeq 0} \IC^\mix_s.
\]
Moreover, applying the exact functor $\beta_{\bleq 0} \beta_{\bgeq 0}$ to the canonical morphism $\dmix_s \to \nmix_s$ tells us that $\IC^\mix_s$ is the image (in $\Perv^\mix_\scS(X,\E)$) of the map $\Delta^\circ_s \to \nabla^\circ_s$.  On the other hand, Lemma~\ref{lem:perv-circ-qher} tells us that this map is also a morphism in $\Perv^\circ_\scS(X,\E)$, where its image is $\IC^\circ_s$.  Since the inclusion functor $\Perv^\circ_\scS(X,\E) \to \Perv^\mix_\scS(X,\E)$ is exact (again by Lemma~\ref{lem:perv-circ-qher}), the image of $\Delta^\circ_s \to \nabla^\circ_s$ is the same in both categories.

\eqref{it:ic-circ-mix}${}\Longrightarrow{}$\eqref{it:perv-pos}.  The assumption implies that $\IC^\mix_s \in \Dmix_\scS(X,\E)_{\bleq 0}$, and that if $n > 0$, then $\IC^\mix_t\la n\ra[1] \in \Dmix_\scS(X,\E)_{\bgeq 1}$.  Therefore, 
\[
\Ext^1(\IC^\mix_s, \IC^\mix_t\la n\ra) = \Hom_{\Dmix_\scS(X,\E)}(\IC^\mix_s, \IC^\mix_t\la n\ra[1]) = 0 
\]
by Lemma~\ref{lem:baric-basic}\eqref{it:baric-orth}.
By Proposition~\ref{prop:pos-qher}, it follows that $\Perv^\mix_\scS(X,\E)$ is positively graded.

The last assertion in the proposition is immediate from part~\eqref{it:ic-circ-mix}.
\end{proof}

\subsection{Koszulity}

For later use,
we conclude this section with a description of the most favorable situation. (See~\cite[Proposition~5.7.2]{rsw} and~\cite[Theorem~5.3]{weidner} for related results.)

\begin{cor}
\label{cor:parity-koszul}
Assume that $\E=\K$ or $\F$, and that for all $s \in \scS$ we have $\IC_s^\mix \cong \cE^\mix_s$. Then the category $\Perv^\mix_\scS(X,\E)$ is Koszul (and hence in particular positively graded).
\end{cor}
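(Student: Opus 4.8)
The plan is to verify directly the defining condition~\eqref{eqn:def-koszul} of Koszulity, namely that $\Ext^k_{\Perv^\mix_\scS(X,\E)}(\IC^\mix_s,\IC^\mix_t\la n\ra)$ vanishes unless $n=-k$ for all $s,t\in\scS$ and $k,n\in\Z$ (recall that the simple objects of $\Perv^\mix_\scS(X,\E)$ are the $\IC^\mix_s$). The key idea is that the hypothesis lets us transport this $\Ext$-group into $\Dmix_\scS(X,\E)=\Kb\Parity_\scS(X,\E)$, where it becomes a $\Hom$-space between (shifts of) objects of $\Parity_\scS(X,\E)$; such spaces are automatically concentrated in a single cohomological degree, which is exactly what is needed.

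Concretely, I would first recall from \cite{modrap2} that, since $\E$ is a field, the realization functor $\Db\Perv^\mix_\scS(X,\E)\to\Dmix_\scS(X,\E)$ is an equivalence of categories (this is the analogue for $\Perv^\mix$ of the statement invoked in the proof of Lemma~\ref{lem:perv-circ-qher}\eqref{it:pervo-std-costd}); in particular $\Ext^k_{\Perv^\mix_\scS(X,\E)}(M,N)\cong\Hom_{\Dmix_\scS(X,\E)}(M,N[k])$ for $M,N$ in the heart. Then, using the hypothesis $\IC^\mix_s\cong\cE^\mix_s$ together with the relation $\la n\ra=\{-n\}[n]$, I would rewrite
\[
\Ext^k_{\Perv^\mix_\scS(X,\E)}(\IC^\mix_s,\IC^\mix_t\la n\ra)\;\cong\;\Hom_{\Dmix_\scS(X,\E)}\bigl(\cE^\mix_s,\,\cE^\mix_t\{-n\}[n+k]\bigr).
\]
Since the internal shift $\{1\}$ on $\Dmix_\scS(X,\E)$ is induced by an autoequivalence of the additive category $\Parity_\scS(X,\E)$, both $\cE^\mix_s$ and $\cE^\mix_t\{-n\}$ are (images of) objects of $\Parity_\scS(X,\E)$, hence complexes concentrated in cohomological degree $0$ in $\Kb\Parity_\scS(X,\E)$. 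For any two objects $A,B$ of an additive category $\cP$ one has $\Hom_{\Kb\cP}(A,B[j])=0$ unless $j=0$ (a chain map between complexes living in distinct single degrees is zero, and there are no nonzero homotopies); applying this with $j=n+k$ shows the right-hand side above vanishes unless $n+k=0$, i.e. unless $k=-n$. This is precisely~\eqref{eqn:def-koszul}, so $\Perv^\mix_\scS(X,\E)$ is Koszul, and it is then positively graded by the observation following~\eqref{eqn:def-koszul} (a consequence of Proposition~\ref{prop:pos-qher}).

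I do not anticipate a genuine obstacle here: the argument is formal once the realization equivalence is available. The only points requiring care are keeping the two shift functors $\{1\}$ and $[1]$ straight (and tracking the extra $[k]$ coming from $\Ext^k$), and citing the realization equivalence in the appropriate generality---it holds for $\E$ a field in the stratified setting of \cite{modrap2}, which is exactly the situation of the corollary. If one preferred to avoid appealing to the full equivalence, it would suffice to know that the natural map $\Ext^k_{\Perv^\mix_\scS(X,\E)}(M,N)\to\Hom_{\Dmix_\scS(X,\E)}(M,N[k])$ is injective, but invoking the equivalence is the cleanest route.
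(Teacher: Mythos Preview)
Your proposal is correct and essentially identical to the paper's proof: the paper computes $\Ext^k_{\Perv^\mix_\scS(X,\E)}(\IC_s^\mix, \IC^\mix_t \la n \ra) \cong \Hom_{\Dmix_{\scS}(X,\E)}(\cE^\mix_s, \cE^\mix_t \{-n\}[k+n])$ and observes this vanishes unless $k+n=0$. Your write-up is simply a more detailed version of the same one-line argument.
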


\begin{proof}
Under our assumptions we have
\[
\Ext^k_{\Perv^\mix_\scS(X,\E)}(\IC_s^\mix, \IC^\mix_t \la n \ra) \cong \Hom_{\Dmix_{\scS}(X,\E)}(\cE^\mix_s, \cE^\mix_t \{-n\}[k+n]),
\]
which clearly vanishes unless $k+n=0$.
\end{proof}

\begin{rmk}
One can easily show that, under these assumptions, $\Perv^\mix_\scS(X,\E)$ is even standard Koszul.
\end{rmk}

\section{Further study of mixed perverse $\O$-sheaves}
\label{sec:ico}

We continue in the setting of Section~\ref{sec:weights}, with the goal of furthering our understanding of positivity.  The arguments in the previous section were mostly based on general principles of homological algebra, and in some cases were restricted to field coefficients.  To make further progress, we need to bring in concrete geometric facts about our variety.  In this section, we will focus on $\O$-sheaves as an intermediary between $\F$- and $\K$-sheaves, and
the main results will involve the assumption that $\IC^\mix_s(\K) \cong \cE^\mix_s(\K)$.  This holds, of course, on flag varieties, by~\cite{kl}.

\subsection{Describing extensions from an open set}
\label{ss:extension}

We begin with a brief review of a convenient language for describing objects in $\Dmix_\scS(X,\E)$ with a specified restriction to some open subset of $X$ (see e.g.~\cite[Lemma~2.18]{jmw} for a similar statement in the classical setting).  The descriptions below are valid for arbitrary coefficients, although they will be used in this paper mainly in the case where $\E = \O$.

Let $X_t \subset X$ be a closed stratum, and let $j: U \hookrightarrow X$ be the inclusion of the complementary open subset.  Let $M_U \in \Dmix_\scS(U,\E)$.  Then there is a bijection between the set of isomorphism classes of pairs $(M,\alpha)$ where $M \in \Dmix_\scS(X,\E)$ and $\alpha : j^* M \simto M_U$ is an isomorphism in $\Dmix_\scS(U,\E)$,
and the set of isomorphism classes of distinguished triangles
\begin{equation}
\label{eqn:triangle-extensions}
A \to i_t^{!}j_{!}M_U \to B \xrightarrow{[1]}
\end{equation}
in $\Dmix_{\scS}(X_t, \E)$.
Specifically, given such a triangle, one can recover $M$ as the cone of the composite morphism $i_{t*}A \to i_{t*}i_t^{!}j_{!}M_U \to j_{!}M_U$. On the other hand, to $M$ we associate the natural triangle with
\[
A = i_{t}^*M[-1]
\qquad\text{and}\qquad
B = i_{t}^!M.
\]
Here are some specific examples:

\subsubsection{}\label{sss:ic}
If $M_U$ is perverse, the extension $M = j_{!*}M_U$ corresponds to
\[
A = \tau_{\le 0} i_t^{!}j_{!}M_U,
\qquad
B = \tau_{\ge 1} i_t^{!}j_{!}M_U
\]
(see~\cite[Proposition~1.4.23]{bbd}).

\subsubsection{}\label{sss:!}
The extension $M = j_{!}M_U$ corresponds to $A = 0$, $B = i_t^{!}j_{!}M_U$.  The extension $M = j_{*}M_U$ corresponds to $A = i_t^{!}j_{!}M_U$, $B = 0$.

\subsubsection{}\label{sss:beta}
If $M_U \in \Dmix_\scS(U,\E)^\circ$, then $\beta_{\bgeq 0}j_{!}M_U$ corresponds to
\[
A = \beta_{\bleq -1}i_t^{!}j_{!}M_U,
\qquad
B = \beta_{\bgeq 0}i_t^{!}j_{!}M_U.
\]
(Indeed we have $A=i_t^* \beta_{\bgeq 0}j_{!}M_U [-1]$, hence $\beta_{\bgeq 0} A = (\beta_{\bgeq 0} i_t^*)(\beta_{\bgeq 0} j_!) M_U[-1]=0$, which implies that $A$ is in $\Dmix_{\scS}(X_t, \E)_{\bleq -1}$. On the other hand, $B=i_t^! \beta_{\bgeq 0}j_{!}M_U$ is in $\Dmix_{\scS}(X_t, \E)_{\bgeq 0}$ by Lemma~\ref{lem:baric-stability}. Hence triangle~\eqref{eqn:triangle-extensions} must be the truncation triangle for the baric structure.)

\subsubsection{}\label{sss:parity}
If $M_U \in \Parity_{\scS}(U,\E)$, then $i_t^{!}j_{!}M_U \in \Dmix_{\scS}(X_t,\E)$ has weights in the interval $[-1,0]$.  In other words, it can be written as a complex $F^\bullet$ in $\Kb\Parity_{\scS}(X_t,\E)$ in which the only nonzero terms are $F^0$ and $F^1$. If $\E=\K$ or $\F$, then the ``parity extension'' of $M_U$ constructed in~\cite[Lemma~2.27]{jmw} (considered as an object in $\Dmix_\scS(X,\E)$) corresponds to
\[
A = F^1[-1], \qquad B = F^0.
\]

\subsection{Stalks of the $\Delta^\circ_s(\O)$}
\label{ss:stalks-deltao}

If $M$ is in $\Dmix_{\scS}(X,\E)$, we will say that \emph{the stalks of $M$ are pure of weight $0$} if for all $s \in \scS$, the object $i_s^* M \in \Dmix_{\scS}(X_s,\E)$ is pure of weight $0$, i.e.~a direct sum of objects of the form $\uuE_{X_s}\{i\}$ for $i \in \Z$. Typical objects that satisfy this condition are the parity sheaves $\cE^\mix_s$. Note that if $M$ is in $\Dmix_{\scS}(X,\O)$, then the stalks of $M$ are pure of weight $0$ iff the stalks of $\F(M)$ are pure of weight~$0$.

In the proofs below we will use the following notation. Recall from Lemma~\ref{lem:beta-stratum} that on a single stratum $X_s$, the functors $\beta_{\bleq n}$ and $\beta_{\bgeq n}$ are t-exact.  For objects in $\Dmix_\scS(X_s,\E)$, we set
\[
\pH^k_r := \pH^k \circ \beta_{\bleq r} \circ \beta_{\bgeq r} \cong \beta_{\bleq r} \circ \beta_{\bgeq r} \circ \pH^k.
\]

The following result relates ``pointwise purity'' to a ``torsion-free'' condition.

\begin{lem}\label{lem:stalks-dcirc-ic}
For each $s \in \scS$, the following conditions are equivalent:
\begin{enumerate}
\item The stalks of $\Delta^\circ_s(\F)$ are pure of weight $0$.\label{it:stalk-f}
\item The stalks of $\Delta^\circ_s(\O)$ are pure of weight $0$.\label{it:stalk-o}
\end{enumerate}
Moreover, if $\IC^\mix_s(\K) \cong \cE^\mix_s(\K)$, then these statements are also equivalent to the following one:
\begin{enumerate}
\setcounter{enumi}{2}
\item We have $\Delta^\circ_s(\O) \cong \IC^\mix_s(\O)$, and the stalks of $\IC^\mix_s(\O)$ are free.\label{it:stalk-ic}
\end{enumerate}
\end{lem}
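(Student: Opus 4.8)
The equivalence of \eqref{it:stalk-f} and \eqref{it:stalk-o} is the easy part: as noted just before the lemma, for an object of $\Dmix_\scS(X,\O)$ the stalks are pure of weight $0$ if and only if the stalks of its modular reduction are. So I would first check that $\F(\Delta^\circ_s(\O)) \cong \Delta^\circ_s(\F)$. This follows from the definition $\Delta^\circ_s = \beta_{\bgeq 0} j_! \uuE_{X_s}$ together with the commutation of $\beta_{\bgeq 0}$ with $\F({-})$ (Lemma~\ref{lem:beta-K-F}) and the obvious fact that $\F$ commutes with $j_!$ and sends $\uuE_{X_s}(\O)$ to $\uuE_{X_s}(\F)$. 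Granting this identification, the equivalence \eqref{it:stalk-f}${}\Leftrightarrow{}$\eqref{it:stalk-o} is immediate.

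The substance is the equivalence with \eqref{it:stalk-ic} under the hypothesis $\IC^\mix_s(\K) \cong \cE^\mix_s(\K)$. For \eqref{it:stalk-ic}${}\Rightarrow{}$\eqref{it:stalk-o}: if $\Delta^\circ_s(\O) \cong \IC^\mix_s(\O)$ with free stalks, then $i_t^* \IC^\mix_s(\O)$ is a free $\O$-module in each internal/cohomological bidegree; I must rule out nonzero cohomology in positive perverse degree, but this is controlled by the fact that $\IC^\mix_s(\O)$ lies in $\Dmix_\scS(X,\O)_{\le 0}$ (weights $\le 0$) always, and freeness of stalks upgrades ``weights $\le 0$'' to ``pure of weight $0$'' via Example~\ref{ex:wts-stratum}(2) — that example says precisely that for $\O$-coefficients, weights${}\le 0$ plus torsion-freeness of the top piece forces vanishing below, and the Verdier-dual statement $\D(\IC^\mix_s) \cong \IC^\mix_s$ gives weights${}\ge 0$ as well, hence purity. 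The harder direction is \eqref{it:stalk-o}${}\Rightarrow{}$\eqref{it:stalk-ic}, where I need to produce the isomorphism $\Delta^\circ_s(\O) \cong \IC^\mix_s(\O)$ out of thin air. The plan is an induction on the number of strata, using the extension-description machinery of~\S\ref{ss:extension}. Write $X_s$ as the open stratum (or handle the support): for a closed stratum $X_t$ with open complement $j: U \hookrightarrow X$, compare the triangle \eqref{eqn:triangle-extensions} computing $\IC^\mix_s = j_{!*}(\cdots)$ (namely $A = \tau_{\le 0} i_t^! j_! M_U$, $B = \tau_{\ge 1} i_t^! j_! M_U$, see~\S\ref{sss:ic}) with the one computing $\beta_{\bgeq 0} j_! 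M_U = \Delta^\circ_s$ (namely $A = \beta_{\bleq -1} i_t^! j_! M_U$, $B = \beta_{\bgeq 0} i_t^! j_! M_U$, see~\S\ref{sss:beta}). These two extensions coincide exactly when the baric truncation of $i_t^! j_! M_U$ agrees with its perverse truncation on that object — i.e.\ when $i_t^! j_! M_U$ has no ``mixed'' part straddling the cutoff — and the purity-of-stalks hypothesis, combined with the input $\IC^\mix(\K) \cong \cE^\mix(\K)$ feeding the $\K$-level computation, is precisely what forces this.

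Concretely, I expect the key computation to run as follows. Over $\K$, the hypothesis $\IC^\mix_s(\K) \cong \cE^\mix_s(\K)$ means $j_! M_U$ has a parity-type extension, so by~\S\ref{sss:parity} the object $i_t^! j_!$ of the restriction has weights in $[-1,0]$, i.e.\ is concentrated in cohomological degrees governed by a two-term complex of parity objects; chasing Verdier duality and the perverse bounds, one sees the $\K$-costalk $i_t^! \IC^\mix_s(\K)$ lies in strictly positive perverse degrees while the $\K$-stalk lies in strictly negative ones (this is the usual support/cosupport inequality, strict by Kazhdan--Lusztig parity). Now over $\O$: the hypothesis that stalks of $\Delta^\circ_s(\O)$ are pure of weight $0$ says $i_t^* \Delta^\circ_s(\O)$ is a direct sum of $\uuO_{X_t}\{i\}$'s, in particular torsion-free; I want to deduce that the perverse and baric truncation triangles for $i_t^! j_! \uuO_{X_s}(\cdots)$ coincide. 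The point is that the only obstruction to their agreement is a torsion phenomenon in the bidegree on the truncation boundary, and purity of the stalk kills exactly that torsion — one extracts this by applying $i_t^*$ to the candidate isomorphism and invoking the $\O$-version of Example~\ref{ex:wts-stratum}. Having matched the two extension triangles, I get $\Delta^\circ_s(\O) \cong \IC^\mix_s(\O)$, and then re-reading the purity hypothesis through this isomorphism gives that the stalks of $\IC^\mix_s(\O)$ are free.

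**Main obstacle.** The delicate point is the $\O$-level bookkeeping in the induction: unlike the field case, over $\O$ one must simultaneously track weights (in the sense of the baric structure) \emph{and} torsion, and the two extension-class descriptions in~\S\ref{sss:ic} and~\S\ref{sss:beta} differ by exactly the discrepancy between perverse truncation and baric truncation of $i_t^! j_! M_U$. Showing that the purity-of-stalks hypothesis makes this discrepancy vanish — rather than merely become torsion — is where the real work lies, and it is here that the input $\IC^\mix_s(\K) \cong \cE^\mix_s(\K)$ (which pins down the $\K$-shape and hence, via freeness, the $\O$-shape) is indispensable.
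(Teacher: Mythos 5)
Your treatment of \eqref{it:stalk-f}${}\Leftrightarrow{}$\eqref{it:stalk-o} and your strategy for \eqref{it:stalk-o}${}\Rightarrow{}$\eqref{it:stalk-ic} match the paper's: the latter is indeed proved by induction on the number of strata, comparing the extension triangles of \S\ref{sss:ic} and \S\ref{sss:beta} for $L = i_t^{!}j_{!}M_U$ with $M_U = \Delta^\circ_{U,s}(\O) \cong \IC^\mix_{U,s}(\O)$, and showing that the perverse and baric truncation triangles of $L$ coincide. The execution is exactly the three-ingredient argument you anticipate: (a) the hypothesis $\IC^\mix_s(\K)\cong\cE^\mix_s(\K)$ forces $\pH^0_0(L)$ to be torsion, since it dies after $\otimes_\O\K$; (b) purity of the stalks of $\Delta^\circ_s(\O)$ gives both that $\beta_{\bleq -1}L \in \p\Dmix_\scS(X_t,\O)^{\le 0}$ and that $L$ has weights${}\le 0$, whence $\pH^0_0(L)$ is also free; (c) hence $\pH^0_0(L)=0$ and the two truncations agree.

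However, your argument for \eqref{it:stalk-ic}${}\Rightarrow{}$\eqref{it:stalk-o} has a genuine gap: it rests on two claims that are false in this setting. First, $\IC^\mix_s(\O)$ does \emph{not} automatically have weights${}\le 0$ --- weights are not preserved by passing to perverse quotients, and this property is essentially equivalent to the positivity condition the paper is investigating, so it cannot be assumed. Second, $\D(\IC^\mix_s(\O)) \cong \IC^\mix_s(\O)$ fails over $\O$ in general: the intermediate extension is not Verdier self-dual with integral coefficients (the paper notes elsewhere that $\IC^\mix_s(\O)$ and $\D(\IC^\mix_s(\O))$ differ by torsion). Neither claim is needed. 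The correct, much shorter argument is stalk-by-stalk: the stalks of $\IC^\mix_s(\O)$ are free by hypothesis, and their extensions of scalars to $\K$ are the stalks of $\K(\IC^\mix_s(\O)) \cong \IC^\mix_s(\K) \cong \cE^\mix_s(\K)$, which are pure of weight $0$; a direct sum as in Example~\ref{ex:wts-stratum} with all $M^i_j$ free and with $\K\otimes_\O M^i_j = 0$ for $i \ne 0$ is itself pure of weight $0$.
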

\begin{proof}
Conditions~\eqref{it:stalk-f} and~\eqref{it:stalk-o} are equivalent because $\F(\Delta^\circ_s(\O))\cong\Delta^\circ_s(\F)$ (see Lemma~\ref{lem:beta-K-F}).   

Assume now that $\IC^\mix_s(\K) \cong \cE^\mix_s(\K)$. If condition~\eqref{it:stalk-ic} holds, then the stalks of $\IC^\mix_s(\O)$ are pure of weight $0$, since those of $\K(\IC^\mix_s(\O)) \cong \IC^\mix_s(\K)$ are, implying~\eqref{it:stalk-o}.

Conversely, suppose that condition~\eqref{it:stalk-o} holds.  We will prove condition~\eqref{it:stalk-ic} by induction on the number of strata in $X$.  If $X$ consists of a single stratum, the statement is trivial.  Otherwise, let $X_t \subset X$ be a closed stratum, and let $j: U \hookrightarrow X$ be the inclusion of the complementary open subset.  Let $X_s$ be a stratum in $U$.  Let $M_U := \Delta^\circ_{U,s}(\O) \cong \IC^\mix_{U,s}(\O)$, and let $L = i_t^{!} j_{!}M_U$.

We begin by showing that $\pH^0_0(L)$ is a torsion $\O$-module.  Observe that $\K(L) \cong i_t^{!}j_{!} (\K(\IC^\mix_{U,s}(\O))) \cong i_t^{!} j_{!} \IC^\mix_{U,s}(\K)$.  According to \S\ref{sss:ic}, we have $\tau_{\le 0}(\K(L)) \cong i_t^{*}\IC^\mix_s(\K)[-1] \cong i_t^{*}\cE^\mix_s(\K)[-1]$. 
It follows that, when $k \leq 0$, $\pH^k_r(\K(L))$ vanishes unless $r = k-1$.  In particular, $\pH^0_0(\K(L)) \cong \K(\pH_0^0(L)) = 0$.  This implies that $\pH^0_0(L)$ is torsion.

Next, we carry out a similar line of reasoning using the fact that $\beta_{\bleq -1}L \cong i_t^{*}\Delta^\circ_{X,s}(\O)[-1]$ (see~\S\ref{sss:beta}).  The latter is pure of weight $-1$ by assumption so, if $r \le  -1$, $\pH^k_r(L)$ vanishes unless $k = r+1$.  In particular, $\pH^k_r(L)$ vanishes for all $k > 0$ when $r \le -1$.  In other words, $\beta_{\bleq -1}L \in \p\Dmix_\scS(X_t,\O)^{\le 0}$.

Finally, assumption~\eqref{it:stalk-o} implies that $\Delta^\circ_{U,s}(\O)$ has weights${}\le 0$ (see Lemma~\ref{lem:wt-stalks}), and so $L$ has weights${}\le 0$ as well (see Lemma~\ref{lem:wt-stability}).  That is, $\pH^k_r(L) = 0$ for $k < r$, and it must be free when $k = r$.  But we previously saw that $\pH^0_0(L)$ is torsion, so in fact, it must vanish.   For $r \ge 1$, we have that $\pH^k_r(L) = 0$ for all $k \le 0$.  Combining these, we find that $\beta_{\bgeq 0}L \in \p\Dmix_\scS(X_t,\O)^{\ge 1}$.  This fact, together with the previous paragraph, tells us that the two distinguished triangles
\[
\beta_{\bleq -1}L \to L \to \beta_{\bgeq 0}L \to \quad \text{and} \quad
\tau_{\le 0}L \to L \to \tau_{\ge 1}L \to
\]
coincide.  From the discussion in~\S\ref{sss:ic} and~\S\ref{sss:beta}, we conclude that $\Delta^\circ_{X,s}(\O) \cong \IC^\mix_{X,s}(\O)$.  The stalks of $\IC^\mix_{X,s}(\O)$ are torsion-free because those of $\Delta^\circ_{X,s}(\O)$ are by assumption.
\end{proof}

\subsection{Another positivity criterion}
\label{ss:mixed-perv}

The main result of this section is the following.

\begin{thm}\label{thm:positivity}
Assume that $\IC^\mix_s(\K) \cong \cE^\mix_s(\K)$ for all $s \in \scS$.  Then the following are equivalent:
\begin{enumerate}
\item $\Perv^\mix_\scS(X,\F)$ is positively graded.\label{it:pos-f}
\item For all $s, t \in \scS$, we have $[\F(\IC^\mix_s(\O)) : \IC^\mix_t(\F)\la n\ra] = 0$ unless $n = 0$.\label{it:fic-compos}
\item For all $s \in \scS$, $\K(\cP^\mix_s(\O))$ is a direct sum of objects of the form $\cP^\mix_t(\K)$ (i.e., without Tate twists).\label{it:kps-compos}
\item For all $s \in \scS$, we have $\Delta^\circ_s(\O) \cong \IC^\mix_s(\O)$.\label{it:delta-ico}
\end{enumerate}
\end{thm}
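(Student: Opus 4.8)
The plan is to prove the implications \eqref{it:pos-f}${}\Rightarrow{}$\eqref{it:delta-ico}${}\Rightarrow{}$\eqref{it:fic-compos}${}\Rightarrow{}$\eqref{it:pos-f}, together with \eqref{it:fic-compos}${}\Leftrightarrow{}$\eqref{it:kps-compos}. First I would record the consequences of the standing hypothesis that are used throughout. By Corollary~\ref{cor:parity-koszul} the category $\Perv^\mix_\scS(X,\K)$ is Koszul, hence positively graded; so by Proposition~\ref{prop:perv-pos} we have $\Delta^\circ_s(\K)\cong\IC^\mix_s(\K)\cong\cE^\mix_s(\K)$ and $\IC^\mix_s(\K)\in\Dmix_\scS(X,\K)^\circ$ for every $s$, and in particular $\dmix_s(\K)$ has no composition factor $\IC^\mix_t(\K)\la n\ra$ with $n>0$. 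Since $\IC^\mix_s(\O)$ is a torsion-free object, $\F(\IC^\mix_s(\O))$ is perverse, and in the Grothendieck group the decomposition map carries $[\IC^\mix_s(\K)]=[\cE^\mix_s(\K)]$ to $[\F(\IC^\mix_s(\O))]=[\cE^\mix_s(\F)]$; since the $\cE^\mix_s$ have free stalks, the transition matrix expressing $[\cE^\mix_s]$ in the basis $\{[\dmix_t]\}$ is the same over $\K$ and over $\F$, and over $\K$ it coincides with the matrix expressing $[\IC^\mix_s(\K)]$ in that basis. Finally, $\K(-)$ sends $\cP^\mix_s(\O)$ to a projective object $\bigoplus_i\cP^\mix_{t_i}(\K)\la n_i\ra$ of $\Perv^\mix_\scS(X,\K)$ and preserves standard filtrations, so by the reciprocity formula~\eqref{eqn:reciprocity} one may compute $(\cP^\mix_s(\O):\dmix_t(\O)\la n\ra)$ from either coefficient ring.

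The implication \eqref{it:pos-f}${}\Rightarrow{}$\eqref{it:delta-ico} is the crux. Assuming $\Perv^\mix_\scS(X,\F)$ positively graded, Proposition~\ref{prop:perv-pos} makes the baric truncations perverse-t-exact over $\F$ and puts every $\Delta^\circ_s(\F)$ in $\Perv^\mix_\scS(X,\F)$, so by Lemma~\ref{lem:stalks-dcirc-ic} it suffices to prove that the stalks of $\Delta^\circ_s(\F)$ are \emph{pure} of weight~$0$. I would prove this by induction on the number of strata, using the recollement of Proposition~\ref{prop:circ-recollement}: on the complement $U$ of a closed stratum $X_t$ the statement holds by induction (positivity localizes to open unions of strata, and $j^*\Delta^\circ_{X,s}\cong\Delta^\circ_{U,s}$), and one analyses the piece of $\Delta^\circ_s(\F)$ supported on $X_t$ through the extension description of~\S\ref{sss:beta}, namely via $i_t^!j_!$ of the restriction and its baric truncations, comparing throughout against the $\K$-coefficient situation, where $i_t^*\Delta^\circ_s(\K)\cong i_t^*\cE^\mix_s(\K)$ is already pure of weight~$0$. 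The point is to rule out a ``non-pure'' cohomological-degree shift surviving the baric truncation; the weight-stability properties of Lemma~\ref{lem:wt-stability}, the purity criterion of Lemma~\ref{lem:wt-stalks}, and the t-exactness of the $\beta$-functors are the tools I would combine to do so. Granting this, Lemma~\ref{lem:stalks-dcirc-ic} yields~\eqref{it:delta-ico}.

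The remaining implications are then essentially bookkeeping. For \eqref{it:delta-ico}${}\Rightarrow{}$\eqref{it:fic-compos}: by Lemma~\ref{lem:stalks-dcirc-ic} the stalks of $\F(\IC^\mix_s(\O))$ are pure of weight~$0$, hence (by self-duality of $\IC^\mix_s(\O)$ and comparison with the $\K$-coefficient parity sheaf) $\F(\IC^\mix_s(\O))\cong\cE^\mix_s(\F)$ is perverse and equals $\Delta^\circ_s(\F)\cong\nabla^\circ_s(\F)$; Lemma~\ref{lem:perv-circ-qher} then identifies $\Perv^\circ_\scS(X,\F)$ as a semisimple category containing $\cE^\mix_s(\F)$, and reading $[\cE^\mix_s(\F)]=[\F(\IC^\mix_s(\O))]$ in the basis $\{[\IC^\mix_t(\F)]\la n\ra\}$ of $\Perv^\mix_\scS(X,\F)$ shows it is concentrated in $n=0$. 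For \eqref{it:fic-compos}${}\Rightarrow{}$\eqref{it:pos-f}: condition~\eqref{it:fic-compos} says the matrix $c$ expressing $[\cE^\mix_s(\F)]$ in the basis $\{[\IC^\mix_t(\F)]\}$ is Tate-twist-free; combining this with the factorization $Q=cm^\F$ (where $Q$, $m^\F$ are the unitriangular matrices expressing $[\cE^\mix_s(\F)]$ and $[\IC^\mix_t(\F)]$ in the $\{[\dmix_u(\F)]\}$-basis) and with $Q=m^\K$ from the first paragraph, one finds that the matrix expressing $[\dmix_s(\F)]$ in the basis $\{[\IC^\mix_t(\F)]\}$ equals its $\K$-counterpart times $c$, hence has no positive-degree Tate twists, so $\Perv^\mix_\scS(X,\F)$ is positively graded by Proposition~\ref{prop:perv-pos}\eqref{it:delta-compos}. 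The equivalence \eqref{it:fic-compos}${}\Leftrightarrow{}$\eqref{it:kps-compos} follows from the same matrix manipulation, now using reciprocity to convert the statement about $[\dmix_t]$-multiplicities into the assertion $(\K(\cP^\mix_s(\O)):\dmix_t(\K)\la n\ra)=0$ for $n\ne0$. Thus the whole theorem reduces to the single genuinely geometric assertion flagged above — that positivity of $\Perv^\mix_\scS(X,\F)$ forces the stalks of $\Delta^\circ_s(\F)$ to be pointwise pure of weight~$0$ — which I expect to be the main obstacle, since it has no purely formal proof and requires the weight and baric machinery of Section~\ref{sec:weights} together with the extension calculus of~\S\ref{ss:extension}, applied inductively over the strata and compared at each stage with the $\K$-coefficient case.
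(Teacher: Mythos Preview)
Your plan contains a genuine gap: the route \eqref{it:pos-f}${}\Rightarrow{}$\eqref{it:delta-ico}${}\Rightarrow{}$\eqref{it:fic-compos} goes through a strictly stronger statement than~\eqref{it:delta-ico}. In Lemma~\ref{lem:stalks-dcirc-ic}, condition~(3) is ``$\Delta^\circ_s(\O)\cong\IC^\mix_s(\O)$ \emph{and} the stalks of $\IC^\mix_s(\O)$ are free''; the second clause is not redundant. By Corollary~\ref{cor:Q-Koszul}, that conjunction is equivalent to $\Perv^\mix_\scS(X,\F)$ being standard $Q$-Koszul, which the paper explicitly identifies (see~\S\ref{ss:intro-intro} and Theorem~\ref{thm:main2} versus Theorem~\ref{thm:main1}) as a strictly stronger condition than positive grading. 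Hence your proposed step ``positivity forces the stalks of $\Delta^\circ_s(\F)$ to be pure of weight~$0$'' is simply false in general, and no amount of weight/baric bookkeeping will establish it. The same error recurs in your \eqref{it:delta-ico}${}\Rightarrow{}$\eqref{it:fic-compos}: you invoke Lemma~\ref{lem:stalks-dcirc-ic} to get pure stalks from $\Delta^\circ_s(\O)\cong\IC^\mix_s(\O)$ alone, and then claim $\F(\IC^\mix_s(\O))\cong\cE^\mix_s(\F)$ and $\Delta^\circ_s(\F)\cong\nabla^\circ_s(\F)$. Neither holds under~\eqref{it:delta-ico}; the latter would make $\Perv^\circ_\scS(X,\F)$ semisimple, which by Theorem~\ref{thm:koszulity} is equivalent to full Koszulity. (Also, $\IC^\mix_s(\O)$ is not Verdier self-dual over $\O$; it differs from its dual by torsion.)

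The paper avoids this by separating the two halves. First, \eqref{it:pos-f}${}\Leftrightarrow{}$\eqref{it:fic-compos}${}\Leftrightarrow{}$\eqref{it:kps-compos} is done entirely at the level of Grothendieck groups, much as in your \eqref{it:fic-compos}${}\Rightarrow{}$\eqref{it:pos-f} (which is correct): one uses that $\Hom(\cP^\mix_s(\O),\IC^\mix_t(\O)\la n\ra)$ is $\O$-free and compares its $\K$- and $\F$-dimensions; then the coefficients $d_{s,t}$ in $[\dmix_s(\O)]=\sum d_{s,t}[\IC^\mix_t(\O)]$ lie in $\Z[v^{-1}]$ by positivity over~$\K$, and~\eqref{it:fic-compos} forces $[\dmix_s(\F)]$ into $\sum_u \Z[v^{-1}]\cdot[\IC^\mix_u(\F)]$. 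Second, and crucially, \eqref{it:delta-ico} is linked not to stalk purity but to condition~\eqref{it:ic-circ-mix} of Proposition~\ref{prop:perv-pos} (namely $\IC^\circ_s(\F)\cong\IC^\mix_s(\F)$), via an induction on strata. With $U=X\setminus X_t$ and $L(\E)=i_t^! j_! \Delta^\circ_{U,s}(\E)$, the argument shows (using Lemma~\ref{lem:pervo-pos} and the description in~\S\ref{sss:ic}--\S\ref{sss:beta}) that $\Delta^\circ_s(\O)\cong\IC^\mix_s(\O)$ holds iff $\pH^1(\beta_{\bleq -1}L(\O))=0$, and that $\IC^\circ_s(\F)\cong\IC^\mix_s(\F)$ holds iff $\pH^0(\beta_{\bleq -1}i_t^*\IC^\circ_s(\F))=0$; a chain of identifications then shows the latter $\F$-vector space is $\F\otimes_\O \pH^1(\beta_{\bleq -1}L(\O))$. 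This is the genuinely geometric step, but it compares \eqref{it:delta-ico} with~\eqref{it:pos-f} directly, never passing through the stronger torsion-freeness of stalks.
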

\begin{proof}[Proof of the equivalence of parts~\eqref{it:pos-f}--\eqref{it:kps-compos}]
We begin by proving the equivalence of parts~\eqref{it:fic-compos} and~\eqref{it:kps-compos}. By the same arguments as in the proof of~\cite[Lemma~5.2]{modrap1} (see also~\cite[Lemma~2.10]{modrap2}), the $\O$-module $\Hom(\cP^\mix_s(\O),\IC^\mix_t(\O) \la n \ra)$ is free, and we have natural isomorphisms
\begin{align*}
\F \otimes_\O \Hom(\cP^\mix_s(\O),\IC^\mix_t(\O) \la n \ra) & \cong \Hom (\cP^\mix_s(\F), \F(\IC^\mix_t(\O)) \la n \ra), \\
\K \otimes_\O \Hom(\cP^\mix_s(\O),\IC^\mix_t(\O) \la n \ra) & \cong \Hom (\K(\cP^\mix_s(\O)), \IC^\mix_t(\K) \la n \ra).
\end{align*}
Condition~\eqref{it:fic-compos} expresses the property that the first vector space can be nonzero only if $n=0$, and condition~\eqref{it:kps-compos} expresses the property that the second vector space can be nonzero only if $n=0$. Hence these conditions are indeed equivalent.

To prove the other equivalences we need to introduce Grothendieck groups. For $\E=\K$, $\O$ or $\F$, consider the Grothendieck group $K^\mix_\scS(X,\E)$ of the abelian category $\Perv^\mix_\scS(X,\E)$. This abelian group naturally has the structure of a $\Z[v,v^{-1}]$-module, where $v$ acts via the shift $\la 1 \ra$. The classes of the objects $\IC^\mix_s(\E)$ form a basis of this $\Z[v,v^{-1}]$-module, and similarly for the objects $\dmix_s(\E)$. (When $\E = \O$, this assertion relies on the fact that $\E$ has finite global dimension.) Moreover, the functors $\K(-)$ and $\F(-)$ induce morphisms of $\Z[v,v^{-1}]$-modules
\[
e_\K : K^\mix_\scS(X,\O) \to K^\mix_\scS(X,\K), \qquad r_\F : K^\mix_\scS(X,\O) \to K^\mix_\scS(X,\F).
\]
For any $s \in \scS$, write
\[
[\dmix_s(\O)] = \sum_{t \in \scS} d_{s,t} [\IC^\mix_t(\O)]
\]
where $d_{s,t} \in \Z[v,v^{-1}]$. 

Now we can prove that~\eqref{it:fic-compos} implies~\eqref{it:pos-f}.
First, it follows from our assumption that $\Perv^\mix_\scS(X,\K)$ is positively graded (see Corollary~\ref{cor:parity-koszul}).  Therefore, applying $e_\K$, we see that we must have $d_{s,t} \in \Z[v^{-1}]$ for any $s,t$. Now assumption \eqref{it:fic-compos} ensures that
\[
r_\F(\IC^\mix_t(\O)) \in \sum_{u \in \scS} \Z \cdot [\IC^\mix_u(\F)].
\]
It follows that $[\dmix_s(\F)] = r_\F([\dmix_s(\O)])$ is a $\Z[v^{-1}]$-linear combination of the $[\IC^\mix_u(\F)]$.  In other words, Proposition~\ref{prop:perv-pos}\eqref{it:delta-compos} holds, so $\Perv^\mix_\scS(X,\F)$ is positively graded.

For the converse, suppose that~\eqref{it:pos-f} holds. Write
\[
[\cP^\mix_s(\O)] = \sum_{t \in \scS} p_{s,t} [\dmix_t(\O)]
\]
where $p_{s,t} \in \Z[v,v^{-1}]$. Applying $r_\F$, we obtain that $p_{s,t} \in \Z[v^{-1}]$. Since $\Perv^\mix_\scS(X,\K)$ is also positively graded, we deduce that the indecomposable direct summands of $\K(\cP^\mix_s(\O))$ are of the form $\cP^\mix_t(\K) \la n \ra$ with $n \leq 0$. Assume that $\cP^\mix_t(\K) \la n \ra$ appears for some $n<0$. By the remarks in the equivalence of \eqref{it:fic-compos} and \eqref{it:kps-compos}, this implies that $\IC_s^\mix \la -n \ra$ is a composition factor of the mixed perverse sheaf $\F(\IC^\mix_t(\O))$. Then $\IC_s^\mix \la -n \ra$ is also a composition factor of $\F(\dmix_t(\O))=\dmix_t(\F)$, which contradicts Proposition~\ref{prop:perv-pos}\eqref{it:delta-compos}.
\end{proof}

\begin{rmk}
Since $\IC^\mix_s(\O)$ and $\D(\IC^\mix_s(\O))$ differ only by torsion, the mixed perverse sheaves $\F(\IC_s^\mix(\O) )$ and $\D \bigl( \F(\IC_s^\mix(\O) \bigr)$ have the same composition factors. Hence condition \eqref{it:fic-compos} is equivalent to the property that all composition factors of all $\F(\IC^\mix_s(\O))$ are of the form $\IC^\mix_t(\F)\la n \ra$ with $n\leq 0$, or all of the form $\IC^\mix_t(\F)\la n \ra$ with $n\geq 0$. A similar remark applies to~\eqref{it:kps-compos}.
\end{rmk}

\begin{lem}\label{lem:pervo-pos}
Assume that $\IC^\mix_s(\K) \cong \cE^\mix_s(\K)$ for all $s \in \scS$.  In addition, assume that  conditions~\eqref{it:pos-f}--\eqref{it:delta-ico} of Theorem~{\rm \ref{thm:positivity}} hold for every locally closed union of strata $Y \subsetneq X$.  Then, for all $s \in \scS$, the objects $\beta_{\bgeq 0}\IC^\mix_s(\F)$, $\beta_{\bleq 0}\IC^\mix_s(\F)$, $\Delta^\circ_s(\F)$, and $\nabla^\circ_s(\F)$ are all perverse.
\end{lem}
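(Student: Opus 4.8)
The plan is to fix a closed stratum $X_t \subset X$ and reduce everything to a perverse-amplitude computation on $X_t$, following closely the proof of Lemma~\ref{lem:stalks-dcirc-ic}. Write $j \colon U := X \smallsetminus X_t \hookrightarrow X$ for the complementary open inclusion, and set $i = i_t$. If $X_s = X_t$ then $\IC^\mix_t(\F)$, $\Delta^\circ_t(\F)$ and $\nabla^\circ_t(\F)$ are all supported on the single stratum $X_t$, on which the functors $\beta_{\bleq n}$, $\beta_{\bgeq n}$ are $t$-exact (Lemma~\ref{lem:beta-stratum}) and $i_{t*}$ is $t$-exact, and all four objects are equal to the perverse sheaf $\uuF_{X_t}$; so I may assume $X_s \subseteq U$. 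Since $U$ is a proper union of strata, Theorem~\ref{thm:positivity} applies to $U$, so $\Perv^\mix_\scS(U,\F)$ is positively graded; consequently $\IC^\mix_{U,s}(\F) \in \Dmix_\scS(U,\F)^\circ$, the objects $\Delta^\circ_{U,s}(\F)$ and $\nabla^\circ_{U,s}(\F)$ are perverse (Remark~\ref{rmk:dcirc-perverse}, Lemma~\ref{lem:perv-circ-qher}), and $\Delta^\circ_{U,s}(\O) \cong \IC^\mix_{U,s}(\O)$; restricting the hypothesis $\IC^\mix_\bullet(\K) \cong \cE^\mix_\bullet(\K)$ to $U$ and applying Lemma~\ref{lem:stalks-dcirc-ic} over $U$, the stalks of $\IC^\mix_{U,s}(\O)$ are pure of weight $0$.

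Next I would reduce the claim for $X$ to statements on the single stratum $X_t$. Since $\D$ interchanges $\beta_{\bleq 0}$ and $\beta_{\bgeq 0}$ (Lemma~\ref{lem:baric-stability}), fixes $\IC^\mix_{X,s}(\F)$, and carries $\nabla^\circ_{X,s}(\F)$ to $\Delta^\circ_{X,s}(\F)$, it suffices to treat $\Delta^\circ_{X,s}(\F)$ and $\beta_{\bgeq 0}\IC^\mix_{X,s}(\F)$. Each restricts on $U$ to a perverse object—respectively $\Delta^\circ_{U,s}(\F)$ and $\IC^\mix_{U,s}(\F) = \beta_{\bgeq 0}\IC^\mix_{U,s}(\F)$—so by the recollement description of the perverse $t$-structure each is perverse on $X$ as soon as its $\ast$-stalk along $X_t$ lies in $\p\Dmix_\scS(X_t,\F)^{\le 0}$ and its $!$-stalk along $X_t$ lies in $\p\Dmix_\scS(X_t,\F)^{\ge 0}$. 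Using the dictionary of~\S\ref{ss:extension}: one first checks, via $\dmix_{X,s} \cong j_!\dmix_{U,s}$ and the vanishing of $\beta_{\bgeq 0}$ on $j_!\bigl(\Dmix_\scS(U,\O)_{\bleq -1}\bigr)$, that $\Delta^\circ_{X,s}(\O) \cong \beta_{\bgeq 0}j_!\IC^\mix_{U,s}(\O)$, so that by~\S\ref{sss:beta} the two stalks of $\Delta^\circ_{X,s}(\F)$ along $X_t$ are $(\beta_{\bleq -1}L_\F)[1]$ and $\beta_{\bgeq 0}L_\F$, where $L_\E := i_t^{!}j_{!}\IC^\mix_{U,s}(\E)$ and $L_\F = \F(L_\O)$ by Lemma~\ref{lem:beta-K-F}. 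For $\beta_{\bgeq 0}\IC^\mix_{X,s}(\F)$ one instead uses the baric triangle $\beta_{\bleq -1}\IC^\mix_{X,s}(\F) \to \IC^\mix_{X,s}(\F) \to \beta_{\bgeq 0}\IC^\mix_{X,s}(\F) \xrightarrow{[1]}$: its first term is supported on $X_t$ (it restricts to $0$ on $U$), and since $\IC^\mix_{X,s}(\F)$ is perverse while $i_t^{!}\IC^\mix_{X,s}(\F) \in \p\Dmix_\scS(X_t,\F)^{\ge 1}$, a short diagram chase reduces the perversity of $\beta_{\bgeq 0}\IC^\mix_{X,s}(\F)$ to the claim that $i_t^{*}\beta_{\bleq -1}\IC^\mix_{X,s}(\F)$ is concentrated in perverse degree $1$. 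In all cases the lemma is thus reduced to placing certain baric and perverse truncations of $\IC$-type objects in the expected perverse degrees on the single stratum $X_t$.

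That amplitude computation is the crux, and I would carry it out by the method of the proof of Lemma~\ref{lem:stalks-dcirc-ic}: working on $X_t$ with the functors $\pH^k_r = \pH^k \circ \beta_{\bleq r}\circ\beta_{\bgeq r}$ and passing between the coefficient rings via $\K(L_\O)$, $\F(L_\O)$. On the one hand, $\K(L_\O) = i_t^{!}j_{!}\cE^\mix_{U,s}(\K)$ is the $!$-stalk along $X_t$ of the parity extension $\cE^\mix_{X,s}(\K) = \IC^\mix_{X,s}(\K)$, hence (\S\ref{sss:parity}) a two-term complex of parity complexes in weights $[-1,0]$; comparison with~\S\ref{sss:ic} identifies its perverse-truncation pieces with explicit sums of shifted constant sheaves sitting in prescribed perverse degrees, forcing in particular $\K(\pH^0_0(L_\O)) = 0$. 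On the other hand, the purity of the stalks of $\IC^\mix_{U,s}(\O) \cong \Delta^\circ_{U,s}(\O)$ makes $L_\O$ have weights${}\le 0$ and renders $\beta_{\bleq -1}L_\O \cong i_t^{*}\Delta^\circ_{X,s}(\O)[-1]$ amenable to the analysis of Examples~\ref{ex:wts-stratum} and~\ref{ex:baric-stratum}; combining these shows that $\pH^0_0(L_\O)$ is torsion, hence zero, and then locates all the needed baric and perverse truncations of $L_\O$—and therefore of $L_\F = \F(L_\O)$, since $\F({-})$ is right $t$-exact and commutes with the $\beta$'s—in the correct perverse range, and similarly for the $\IC$-truncations relevant to $\beta_{\bgeq 0}\IC^\mix_{X,s}(\F)$. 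Feeding this back through the reductions above gives the perversity of $\Delta^\circ_{X,s}(\F)$ and $\beta_{\bgeq 0}\IC^\mix_{X,s}(\F)$, hence of all four objects. The main obstacle is precisely this mixed $\F/\O/\K$ bookkeeping on $X_t$—interleaving the parity-sheaf input over $\K$, the torsion-freeness input over $\O$, and the compatibility of the baric with the perverse truncations; every step before it is a formal reduction.
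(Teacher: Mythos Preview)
Your approach has a genuine gap. In the first paragraph you assert that ``the stalks of $\IC^\mix_{U,s}(\O)$ are pure of weight $0$'' by invoking Lemma~\ref{lem:stalks-dcirc-ic} on $U$. But the hypotheses of Lemma~\ref{lem:pervo-pos} only give you condition~\eqref{it:delta-ico} of Theorem~\ref{thm:positivity} on $U$, i.e.\ $\Delta^\circ_{U,s}(\O)\cong\IC^\mix_{U,s}(\O)$. That is only \emph{half} of condition~(3) in Lemma~\ref{lem:stalks-dcirc-ic}; the torsion-freeness of the stalks is the other half, and it is not a consequence of positivity. In fact, by Corollary~\ref{cor:Q-Koszul}, purity of the stalks of $\Delta^\circ_{U,s}(\F)$ (equivalently of $\IC^\mix_{U,s}(\O)$) is equivalent to standard $Q$-Koszulity on $U$, which is strictly stronger than the positivity you are allowed to assume. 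Your crux computation in the third paragraph explicitly uses this purity to get ``$L_\O$ has weights${}\le 0$'', so the argument does not go through. (There is also a minor slip: with $L_\E := i_t^!j_!\IC^\mix_{U,s}(\E)$, the identity $L_\F = \F(L_\O)$ would require $\F(\IC^\mix_{U,s}(\O))\cong\IC^\mix_{U,s}(\F)$, which again is not given; the correct object is $i_t^!j_!\Delta^\circ_{U,s}(\F)$.)

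The paper's proof avoids all of this by removing an \emph{open} stratum rather than a closed one. If $X_s$ is not open, everything is supported on a proper closed subvariety where the $\beta$'s are already $t$-exact by Proposition~\ref{prop:perv-pos}\eqref{it:beta-exact}. If $X_s$ is open, one works with the kernel $K$ of $\dmix_s(\F)\to\IC^\mix_s(\F)$ and the cokernel $Q$ of $\IC^\mix_s(\F)\to\nmix_s(\F)$, both supported on $Y = X\smallsetminus X_s$; $t$-exactness of $\beta_{\bgeq 0}$ on $Y$ then makes $\beta_{\bgeq 0}Q$ and $\beta_{\bgeq 0}K$ perverse, and two short triangle arguments finish the job. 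No $\O/\K$ comparison, no stalk computation on $X_t$, is needed.
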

\begin{proof}
If $X_s$ is not an open stratum, then the objects in question are all supported on a proper closed subvariety of $X$, and so are perverse by assumption and Proposition~\ref{prop:perv-pos}.  Assume henceforth that $X_s$ is an open stratum, and let $Y = X \smallsetminus X_s$.  We will treat $\beta_{\bgeq 0}\IC^\mix_s(\F)$ and $\Delta^\circ_s(\F)$; the statement follows for the other two objects by Verdier duality.

Let $Q$ denote the cokernel of the map $\IC^\mix_s(\F) \to \nmix_s(\F)$. Since $Q$ is supported on $Y$, Proposition~\ref{prop:perv-pos}\eqref{it:beta-exact} tells us that the triangle
\[
\beta_{\bleq -1}Q \to Q \overset{h}{\to} \beta_{\bgeq 0}Q \xrightarrow{[1]}
\]
is actually a short exact sequence in $\Perv^\mix_\scS(X,\F)$.  In particular, the map $h$ is surjective.  Now consider the commutative diagram
\[
\xymatrix{
\IC^\mix_s(\F) \ar[r]\ar[d] & \nmix_s(\F) \ar[r]^-{p} \ar@{=}[d] & Q \ar[d]^-{h} \\
\beta_{\bgeq 0}\IC^\mix_s(\F) \ar[r] & \beta_{\bgeq 0}\nmix_s(\F) \ar[r]_-{q} & \beta_{\bgeq 0}Q }
\]
Since $h$ and $p$ are both surjective maps in $\Perv^\mix_\scS(X,\F)$, $q$ is as well.  It follows that the cocone of $q$ (i.e.~$\beta_{\bgeq 0}\IC^\mix_s(\F)$) lies in $\Perv^\mix_\scS(X,\F)$.

Next, let $K$ denote the kernel of the map $\dmix_s(\F) \to \IC^\mix_s(\F)$, and form the distinguished triangle
\[
\beta_{\bgeq 0}K \to \beta_{\bgeq 0}\dmix_s(\F) \to \beta_{\bgeq 0}\IC^\mix_s(\F) \xrightarrow{[1]}.
\]
Since $K$ is supported on $Y$, Proposition~\ref{prop:perv-pos}\eqref{it:beta-exact} again tells us that the first term lies in $\Perv^\mix_\scS(X,\F)$.  We have just seen above that the last term also lies in $\Perv^\mix_\scS(X,\F)$, so the middle term (which is $\Delta^\circ_s(\F)$ by definition) does as well.
\end{proof}

\begin{proof}[End of the proof of Theorem~{\rm \ref{thm:positivity}}]
We will show that condition~\eqref{it:delta-ico} is equivalent to condition~\eqref{it:ic-circ-mix} of Proposition~\ref{prop:perv-pos}, by induction on the number of strata in $X$.  If $X$ consists of a single stratum, it is clear that both statements are true.

Otherwise, let $X_s \subset X$ be an open stratum, and let $X_t \subset X$ be a closed stratum.  Let $U = X \smallsetminus X_t$ and $Y = X \smallsetminus X_s$.  Note that if either~\eqref{it:delta-ico} or condition~\eqref{it:ic-circ-mix} of Proposition~\ref{prop:perv-pos} holds on $X$, the same statement holds on both $U$ and $Y$, and hence, by induction, all parts of Theorem~\ref{thm:positivity} hold on both $U$ and $Y$.  For the remainder of the proof, we assume that this is the case.  We must show that $\Delta^\circ_s(\O) \cong \IC^\mix_s(\O)$ if and only if $\IC^\circ_s(\F) \cong \IC^\mix_s(\F)$.  By Lemma~\ref{lem:pervo-pos}, $\beta_{\bgeq 0}\IC^\mix_s(\F)$ and $\Delta^\circ_s(\F)$ are perverse.  

For $\E=\K$, $\O$ or $\F$, let $M_U(\E) := \Delta^\circ_{U,s}(\E)$. Note that $\F(M_U(\O)) \cong M_U(\F)$ and $\K(M_U(\O)) \cong M_U(\K)$ (see Lemma~\ref{lem:beta-K-F}), and that $M_U(\E) \cong \IC^\mix_{U,s}(\E)$ if $\E=\K$ or $\O$.  Let $j: U \hookrightarrow X$ be the inclusion map, and let $L(\E) = i_t^{!}j_{!}M_U(\E)$. Since $\F(L(\O))=L(\F)$, there is a natural short exact sequence of $\F$-vector spaces
\begin{equation}\label{eqn:lfo-ses}
0 \to \F \otimes_\O \pH^k(L(\O)) \to \pH^k(L(\F)) \to \mathrm{Tor}^\O_1(\F,\pH^{k+1}(L(\O))) \to 0.
\end{equation}
On the other hand, we have $M_U(\K) \cong \IC^\mix_{U,s}(\K) \cong \cE^\mix_{U,s}(\K)$.  By assumption, $j_{!*}M_U(\K)$ coincides with the parity extension $\cE^\mix_{s}(\K)$ of $M_U(\K)$.  Comparing the constructions in~\S\ref{sss:ic} and~\S\ref{sss:parity}, we see that $\tau_{\le 0}L(\K)[1]$ and $\tau_{\ge 1}L(\K)$ are parity sheaves.  In other words,
\begin{equation}\label{eqn:ick-stalk}
\pH^k_r(L(\K)) = 0 \qquad\text{unless}\qquad
\begin{cases}
\text{$k \le 0$ and $r = k - 1$, or} \\
\text{$k \ge 1$ and $r = k$.}
\end{cases}
\end{equation}
We now proceed in several steps.

\emph{Step 1. If $k > 1$, then $\pH^k(\beta_{\bleq -1}L(\O)) = 0$.  If $k < 1$, then $\pH^k(\beta_{\bgeq 0}L(\O)) = 0$.}  Recall that $\Delta^\circ_s(\O) \cong \beta_{\bgeq 0}j_!M_U$.  From~\S\ref{sss:beta}, we have
\[
\beta_{\bleq -1}L(\O) \cong i_t^*\Delta^\circ_s(\O)[-1]
\qquad\text{and}\qquad
\beta_{\bgeq 0}L(\O) \cong i_t^!\Delta^\circ_s(\O).
\]
Since $\Delta^\circ_s(\F) \cong \F(\Delta^\circ_s(\O))$ is perverse, we have by~\cite[Lemma~3.5]{modrap2} that $\Delta^\circ_s(\O)$ lies in $\Perv^\mix_\scS(X,\O)$.  This implies that $i_t^*\Delta^\circ_s(\O)[-1] \in \p\Dmix_\scS(X_t,\O)^{\le 1}$, or in other words, $\pH^k(\beta_{\bleq -1}L(\O)) = 0$ for $k > 1$.

We likewise have $i_t^!\Delta^\circ_s(\O) \in \p\Dmix_\scS(X_t,\O)^{\ge 0}$.  We claim, furthermore, that $\pH^0(i_t^!\Delta^\circ_s(\O))$ is torsion-free: otherwise, $\F(i_t^!\Delta^\circ_s(\O)) \cong i_t^!\Delta^\circ_s(\F)$ would fail to lie in $\p\Dmix_\scS(X_t,\F)^{\ge 0}$, contradicting the fact that $\Delta^\circ_s(\F)$ is perverse.  To reiterate, $\pH^k(\beta_{\bgeq 0}L(\O))$ vanishes for $k < 0$ and is torsion-free when $k = 0$.  But it follows from~\eqref{eqn:ick-stalk} that $\pH^0(\beta_{\bgeq 0}L(\K)) \cong \K \otimes_\O \pH^0(\beta_{\bgeq 0}L(\O))$ vanishes.  Therefore, $\pH^0(\beta_{\bgeq 0}L(\O)) = 0$ as well, finishing the proof of Step 1.

\emph{Step 2. We have $\pH^0(\beta_{\bleq -1} i_t^{*}\Delta^\circ_s(\F)) \cong \F \otimes_\O \pH^1(\beta_{\bleq -1}L(\O))$.}  From Step~1, we know that $\pH^2(\beta_{\bleq -1}L(\O)) = 0$, so~\eqref{eqn:lfo-ses} tells us that $\pH^1(\beta_{\bleq -1}L(\F)) \cong \F \otimes_\O \pH^1(\beta_{\bleq -1}L(\O))$.  On the other hand, as in Step 1, $\beta_{\bleq -1}L(\F) \cong i_s^{*}\Delta^\circ_s(\F)[-1]$, and the result follows.

\emph{Step 3. We have $\Delta^\circ_s(\O) \cong \IC^\mix_s(\O)$ if and only if $\pH^1(\beta_{\bleq -1}L(\O)) = 0$.}  From the descriptions in~\S\ref{sss:ic} and~\S\ref{sss:beta}, we see that $\Delta^\circ_s(\O) \cong \IC^\mix_s(\O)$ if and only if
\[
\tau_{\le 0}L(\O) \cong \beta_{\bleq -1}L(\O)
\qquad\text{and}\qquad
\tau_{\ge 1}L(\O) \cong \beta_{\bgeq 0}L(\O).
\]
According to Step~1, we always have $\beta_{\bleq -1}L(\O) \in {}^p\Dmix_\scS(X_t,\O)^{\le 1}$ and $\beta_{\bgeq 0}L(\O) \in {}^p\Dmix_\scS(X_t,\O)^{\ge 1}$.  Thus, the conditions above hold if and only if $\pH^1(\beta_{\bleq -1}L(\O)) = 0$.

\emph{Step 4. We have $\IC^\circ_s(\F) \cong \IC^\mix_s(\F)$ if and only if $\pH^k(\beta_{\bleq -1}i_t^{*}\IC^\circ_s(\F)) = 0$ for all $k \ge 0$.}  We already know that the restrictions of $\IC^\circ_s(\F)$ and $\IC^\mix_s(\F)$ to $U$ agree.  Recall that $\IC^\mix_s(\F)$ is characterized (among all objects whose restriction to $U$ is $\IC^\mix_{U,s}(\F)$) by the following two properties:
\begin{equation}\label{eqn:ic-mix-char}
i_t^{*}\IC^\mix_s(\F) \in {}^p\Dmix_\scS(X_t,\F)^{\le -1}
\quad\text{and}\quad
i_t^{!}\IC^\mix_s(\F) \in {}^p\Dmix_\scS(X_t,\F)^{\ge 1}.
\end{equation}
Since $\IC^\circ_s(\F)$ is self-Verdier-dual, if it satisfies one of these properties then it must satisfy both.  Thus, $\IC^\circ_s(\F) \cong \IC^\mix_s(\F)$ if and only if
\[
\pH^k(i_t^{*}\IC^\circ_s(\F)) = 0 \qquad\text{for $k \ge 0$.}
\]
But $\IC^\circ_s(\F)$ is itself characterized by similar properties to those in~\eqref{eqn:ic-mix-char}, coming from the recollement structure in Proposition~\ref{prop:circ-recollement}.  In particular, we have $\pH^k(\beta_{\bgeq 0}i_t^{*}\IC^\circ_s(\F)) = 0$ for $k \ge 0$.  By Lemma~\ref{lem:beta-stratum}, we deduce that for $k \geq 0$ we have $\pH^k(i_t^{*}\IC^\circ_s(\F)) \cong \pH^k(\beta_{\bleq -1}i_t^{*}\IC^\circ_s(\F))$, which finishes the proof of Step 4.

\emph{Step 5. We have $\pH^k(\beta_{\bleq -1}i_t^{*}\Delta^\circ_s(\F)) \cong \pH^k(\beta_{\bleq -1}i_t^{*}\IC^\circ_s(\F))$ for $k \ge 0$.}  Let $K$ be the kernel of the map $\Delta^\circ_s(\F) \to \IC^\circ_s(\F)$.  This kernel is to be taken in $\Perv^\circ_\scS(X,\F)$: we do not know at the moment whether $\IC^\circ_s(\F)$ lies in $\Perv^\mix_\scS(X,\F)$.  However, we do know that $K$ lies in $\Perv^\mix_\scS(X,\F)$, because $K$ is supported on $Y$, where the conclusions of Lemma~\ref{lem:perv-circ-qher} hold.  In fact, for each composition factor $\IC^\circ_u(\F) \cong \IC^\mix_u(\F)$ of $K$, we have
\[
\pH^k(\beta_{\bleq -1}i_t^{*}\IC^\circ_u(\F)) = 0 \qquad\text{for $k \ge 0$.}
\]
(If $u \ne t$, this holds because $i_t^{*}\IC^\circ_u(\F) \in \p\Dmix_\scS(X_t,\F)^{\le -1}$; if $u = t$, we clearly have $\beta_{\bleq -1}i_t^{*}\IC^\circ_u(\F) = 0$.)  Therefore, $\pH^k(\beta_{\bleq -1}K) = 0$ for $k \ge 0$.  The result follows from the long exact sequence in perverse cohomology associated with 
\[
\beta_{\bleq -1}i_t^{*}K \to \beta_{\bleq -1}i_t^{*}\Delta^\circ_s(\F) \to \beta_{\bleq -1}i_t^{*}\IC^\circ_s(\F) \xrightarrow{[1]}.
\]

\emph{Conclusion of the proof.}  Since $\Delta^\circ_s(\F)$ is perverse, we know that $\pH^k(i_t^{*}\Delta^\circ_s(\F)) = 0$ for $k > 0$, and so $\pH^k(\beta_{\bleq -1}i_t^{*}\Delta^\circ_s(\F)) = 0$ for $k > 0$ as well.  Then, Step~5 implies that $\pH^k(\beta_{\bleq -1}i_t^{*}\IC^\circ_s(\F)) = 0$ for $k > 0$, so we can rephrase Step~4 as follows: $\IC^\circ_s(\F) \cong \IC^\mix_s(\F)$ if and only if $\pH^0(\beta_{\bleq -1}i_t^{*}\IC^\circ_s(\F)) = 0$.  Using Step~5 again together with Step~2, we have that $\IC^\circ_s(\F) \cong \IC^\mix_s(\F)$ if and only if $\F \otimes_\O \pH^1(\beta_{\bleq -1}L(\O)) = 0$.  The latter holds if and only if $\pH^1(\beta_{\bleq -1}L(\O)) = 0$, and then Step~3 lets us conclude.
\end{proof}

\begin{cor}\label{cor:Q-Koszul}
Assume that $\IC^\mix_s(\K) \cong \cE^\mix_s(\K)$ for all $s \in \scS$.
Then the following conditions are equivalent:
\begin{enumerate}
\item The category $\Perv^\mix_{\scS}(X,\F)$ is standard $Q$-Koszul.\label{it:q-koszul-X}
\item For all $s \in \scS$, we have $\Delta^\circ_s(\O) \cong \IC^\mix_s(\O)$, and $\IC^\mix_s(\O)$ has torsion-free stalks.\label{it:torsion-free-X}
\end{enumerate}
\end{cor}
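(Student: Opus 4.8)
The plan is to deduce the corollary from Theorem~\ref{thm:positivity}, Lemma~\ref{lem:stalks-dcirc-ic}, and an $\Ext$-computation on a single stratum, by showing that each of conditions~\eqref{it:q-koszul-X} and~\eqref{it:torsion-free-X} is equivalent to the condition $(\star)$: \emph{for every $s \in \scS$, the stalks of $\Delta^\circ_s(\F)$ are pure of weight~$0$}. The equivalence of~\eqref{it:torsion-free-X} with $(\star)$ is immediate from Lemma~\ref{lem:stalks-dcirc-ic}: under the standing hypothesis $\IC^\mix_s(\K)\cong\cE^\mix_s(\K)$, condition~\eqref{it:torsion-free-X} is exactly condition~\eqref{it:stalk-ic} of that lemma holding for all $s$, which is equivalent to condition~\eqref{it:stalk-f} holding for all $s$, i.e.\ to $(\star)$. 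In particular $(\star)$ forces $\Delta^\circ_s(\O)\cong\IC^\mix_s(\O)$ for all $s$, hence by Theorem~\ref{thm:positivity} (equivalence of~\eqref{it:pos-f} and~\eqref{it:delta-ico}) it forces $\Perv^\mix_\scS(X,\F)$ to be positively graded; the same holds whenever~\eqref{it:q-koszul-X} does, since standard $Q$-Koszulity is only defined for positively graded categories (Definition~\ref{defn:q-koszul}). So, in proving $(\star)\Leftrightarrow\eqref{it:q-koszul-X}$, I may assume $\Perv^\mix_\scS(X,\F)$ is positively graded throughout; then by Proposition~\ref{prop:perv-pos}\eqref{it:beta-exact} the functor $\beta_{\bgeq 0}$ is t-exact, so $\Delta^\circ_s(\F)=\beta_{\bgeq 0}\dmix_s(\F)$ is perverse, and we are in the setting of Proposition~\ref{prop:parshall-scott} (see Remark~\ref{rmk:dcirc-perverse}).

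The computation linking $(\star)$ to standard $Q$-Koszulity is carried out one stratum at a time. Fix $s,t\in\scS$ and, following Example~\ref{ex:wts-stratum}, write $i_t^*\Delta^\circ_s(\F)\cong\bigoplus_{a,b\in\Z}\uuF_{X_t}^{\oplus m_{a,b}}\{b\}[-a]$. Using the adjunction $(i_t^*,i_{t*})$, the identity $\nmix_t(\F)=i_{t*}\uuF_{X_t}$, the normalization $\la n\ra=\{-n\}[n]$, and the fact that on the single stratum $X_t$ the space $\Hom_{\Dmix_\scS(X_t,\F)}(\uuF_{X_t}\{b\}[-a],\uuF_{X_t}\{c\}[d])$ is $\F$ when $b=c$ and $a=-d$ and is $0$ otherwise, one obtains (recalling that morphisms in $\Dmix_\scS(X,\F)$ between objects of $\Perv^\mix_\scS(X,\F)$ compute $\Ext$-groups, as in the proof of Corollary~\ref{cor:parity-koszul})
\[
\Ext^k_{\Perv^\mix_\scS(X,\F)}\bigl(\Delta^\circ_s(\F),\,\nmix_t(\F)\la n\ra\bigr)\;\cong\;\F^{\oplus m_{-(n+k),\,-n}}\qquad\text{for all }n,k\in\Z.
\]
Since only finitely many $m_{a,b}$ are nonzero and $n,k$ range freely, the left-hand side vanishes for all $(s,t,n,k)$ with $n\ne -k$ if and only if $m_{a,b}=0$ for every $a\ne 0$, i.e.\ if and only if each $i_t^*\Delta^\circ_s(\F)$ is pure of weight~$0$ --- which is exactly $(\star)$. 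Finally, applying $\D$, which exchanges $\Delta^\circ_s(\F)$ with $\nabla^\circ_s(\F)$ and $\dmix_s(\F)$ with $\nmix_s(\F)$ while sending $\la n\ra$ to $\la -n\ra$, identifies $\Ext^k(\dmix_s(\F),\nabla^\circ_t(\F)\la n\ra)$ with $\Ext^k(\Delta^\circ_t(\F),\nmix_s(\F)\la n\ra)$, so the second family of vanishing conditions in Definition~\ref{defn:q-koszul} is equivalent to the first.

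Putting the two directions together: $(\star)$ holds if and only if both families of $\Ext$-groups in Definition~\ref{defn:q-koszul} vanish off the diagonal $n=-k$, i.e.\ if and only if $\Perv^\mix_\scS(X,\F)$ is standard $Q$-Koszul; combined with the equivalence $\eqref{it:torsion-free-X}\Leftrightarrow(\star)$ from the first paragraph, this proves the corollary. I do not expect a genuine obstacle here: the statement is a fairly formal consequence of Theorem~\ref{thm:positivity}, Lemma~\ref{lem:stalks-dcirc-ic}, and the single-stratum $\Ext$-computation above. The only point that demands care is the bookkeeping of the two shift functors $[1]$ and $\{1\}$ on $\Dmix_\scS(X,\F)$ together with the convention $\la 1\ra=\{-1\}[1]$, and using consistently the identification of abelian-category $\Ext$-groups with $\Hom$-groups in $\Dmix_\scS(X,\F)$ (already invoked in the proof of Corollary~\ref{cor:parity-koszul}).
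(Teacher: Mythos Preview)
Your proof is correct and follows essentially the same route as the paper's. Both arguments reduce to the intermediate condition that the stalks of every $\Delta^\circ_s(\F)$ are pure of weight~$0$, invoke Lemma~\ref{lem:stalks-dcirc-ic} for the equivalence with~\eqref{it:torsion-free-X}, use Theorem~\ref{thm:positivity} (or the definition of standard $Q$-Koszulity) to ensure positive gradedness, and use Verdier duality to reduce the two families of $\Ext$-conditions in Definition~\ref{defn:q-koszul} to one; the only difference is that you write out the adjunction computation $\Ext^k(\Delta^\circ_s(\F),\nmix_t(\F)\la n\ra)\cong\F^{\oplus m_{-(n+k),-n}}$ explicitly, whereas the paper compresses this into the phrase ``by adjunction.''
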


\begin{proof}
Each of these conditions independently implies that all parts of Theorem~\ref{thm:positivity} and of Proposition~\ref{prop:perv-pos} hold for $X$.  In particular, both conditions imply at least that $\Perv^\mix_{\scS}(X,\F)$ is positively graded, and that the perverse-sheaf meaning of the notation $\Delta^\circ_s$ is compatible its usage in Definition~\ref{defn:q-koszul}.  By Verdier duality, standard $Q$-Koszulity can be checked by a one-sided condition: $\Perv^\mix_{\scS}(X,\F)$ is standard $Q$-Koszul if and only if $\Ext^k(\Delta^\circ_s(\F), \nmix_t(\F)\la n\ra) = 0$ whenever $n \ne -k$.  By adjunction, the latter holds if and only if the stalks of $\Delta^\circ_s(\F)$ are pure of weight~$0$ for all $s$. That condition is equivalent to~\eqref{it:torsion-free-X} by Lemma~\ref{lem:stalks-dcirc-ic}, as desired.
\end{proof}

\section{Positivity and $Q$-Koszulity for flag varieties}
\label{sec:koszulity}

\subsection{Definitions and notation}

In this section we choose a connected reductive algebraic group $G$, a Borel subgroup $B \subset G$ and a maximal torus $T \subset B$, and focus on the case where $X=\cB:=G/B$ is the flag variety of $G$, endowed with the stratification by Bruhat cells (i.e.~by orbits of $B$).  We use the symbol ``$(B)$'' to denote this stratification. The strata are parametrized by the Weyl group $W:=N_G(T)/T$ of $G$; the dimension of $\cB_w$ is the length $\ell(w)$ of $w$ (for the natural Coxeter group structure on $W$ determined by our choice of $B$). By~\cite[\S4]{modrap2}, the assumptions at the beginning of Section~\ref{sec:weights} are satisfied in this setting. As in~\cite{modrap2} we will assume that $\ell$ is good for $G$. Note also that the assumption of Lemma~\ref{lem:stalks-dcirc-ic}, Theorem~\ref{thm:positivity}, and Corollary~\ref{cor:Q-Koszul} is satisfied in this case, by~\cite{kl}.

We will also consider a connected reductive group $\Gv$, a Borel subgroup $\Bv \subset \Gv$, and a maximal torus $\Tv \subset G$, such that the based root datum of $\Gv$ determined by $\Tv$ and $\Bv$ is dual to the based root datum of $G$ determined by $T$ and $B$.
As above we have a flag variety $\cBv:=\Gv/\Bv$, endowed with the Bruhat stratification. The strata are also parametrized by $W$ (since the Weyl groups of $(G,T)$ and $(\Gv,\Tv)$ can be canonically identified). We will use h\'a\v cek accents to denote objects attached to $\Gv$ rather than to $G$. For instance, $\dv_w(\E)$ is a standard object in $\Perv_{(\Bv)}(\cBv,\E)$, and $\cTv^\mix_w(\E)$ is a tilting object in $\Perv^\mix_{(\Bv)}(\cBv,\E)$.

Recall that by~\cite[Theorem~5.4]{modrap2} there exists an equivalence of triangulated categories
\[
\kappa : \Dmix_{(B)}(\cB,\E) \simto \Dmix_{(\Bv)}(\cBv,\E)
\]
which satisfies in particular $\kappa \circ \la n \ra \cong \la -n \ra [n] \circ \kappa$ and
\[
\kappa(\nmix_w) \cong \nv^\mix_{w^{-1}}, \quad \kappa(\cT^\mix_w) \cong \cEv^\mix_{w^{-1}}, \quad \kappa(\cE^\mix_w) \cong \cTv^\mix_{w^{-1}}.
\]
Below we will also use the Radon transform
\[
\Radon^\mix : \Dmix_{(B)}(\cB,\E) \simto \Dmix_{(B)}(\cB,\E).
\]
This equivalence of triangulated categories satisfies 
\[
\Radon^\mix(\nmix_w \la n \ra) \cong \dmix_{w w_0} \la n \ra, \qquad \Radon^\mix(\cT^\mix_w \la n \ra) \cong \cP^\mix_{w w_0} \la n \ra.
\]
(See~\cite[Proposition~4.11]{modrap2}.)
We also set
\[
\sigma := \kappa \circ (\Radon^\mix)^{-1} : \Dmix_{(B)}(\cB,\E) \simto \Dmix_{(\Bv)}(\cBv,\E).
\]
This functor has the property that
\[
\sigma(\dmix_w \la n \ra) \cong \nv^\mix_{w_0 w^{-1}} \la -n \ra [n]
\qquad\text{and}\qquad
\sigma(\cP^\mix_w \la n \ra) \cong \cEv^\mix_{w_0 w^{-1}} \la -n \ra [n].
\]

In~\cite[Proposition~5.5]{modrap2} we have also constructed a t-exact ``forgetful'' functor
\[
\mu : \Dmix_{(B)}(\cB,\E) \to \Db_{(B)}(\cB,\E)
\]
(where the right-hand side is endowed with the usual perverse t-structure)
and an isomorphism $\mu \circ \la 1 \ra$ such that for all $\cF,\cG \in \Dmix_{(B)}(\cB,\E)$ the morphism
\begin{equation}
\label{eqn:morphism-mu}
\bigoplus_{n \in \Z} \Hom(\cF,\cG \la n \ra) \to \Hom(\mu \cF, \mu \cG)
\end{equation}
induced by $\mu$ is an isomorphism, and such that
\begin{gather*}
\mu(\dmix_w) \cong \Delta_w, \quad \mu(\nmix_w) \cong \nabla_w, \quad \mu(\IC^\mix_w) \cong \IC_w, \\
\mu(\cT^\mix_w) \cong \cT_w, \quad \mu(\cE^\mix_w) \cong \cE_w.
\end{gather*}
(Here $\Delta_w$, $\nabla_w$, $\IC_w$, $\cT_w$ are the obvious ``non-mixed'' analogues of $\dmix_w$, $\nmix_w$, $\IC^\mix_w$, $\cT^\mix_w$, which are objects of the usual category $\Perv_{(B)}(\cB,\E)$ of Bruhat-constru\-ctible perverse sheaves on $\cB$.) There is also a functor $\muv : \Dmix_{(\Bv)}(\cBv,\E) \to \Db_{(\Bv)}(\cBv,\E)$ with similar properties.

\subsection{Main results}

The next two theorems are the main results of the paper.

\begin{thm}[Positivity]\label{thm:main1}
The following are equivalent:
\begin{enumerate}
\item The category $\Perv^\mix_{(B)}(\cB,\F)$ is positively graded.
\item For all $w \in W$, we have $\Delta^\circ_w(\O) \cong \IC^\mix_w(\O)$.
\item For all $w \in W$, the object $\cEv_w(\O) \in \Db_{(\Bv)}(\cBv,\O)$ is perverse.
\item For all $w \in W$, the object $\cEv_w(\F) \in \Db_{(\Bv)}(\cBv,\O)$ is perverse.
\end{enumerate}
\end{thm}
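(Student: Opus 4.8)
The plan is to prove the four conditions equivalent through the chain $(2)\Leftrightarrow(1)\Leftrightarrow(4)\Leftrightarrow(3)$, with $(1)\Leftrightarrow(4)$ as the substantial step. For $(1)\Leftrightarrow(2)$: on $\cB$ we have $\IC^\mix_w(\K)\cong\cE^\mix_w(\K)$ by~\cite{kl}, so this is precisely the equivalence of conditions~\eqref{it:pos-f} and~\eqref{it:delta-ico} in Theorem~\ref{thm:positivity}. For $(3)\Leftrightarrow(4)$: $\cEv_w(\O)$ is a parity complex, so each $i_u^*\cEv_w(\O)$ and $i_u^!\cEv_w(\O)$ is a direct sum of shifts of the \emph{free} sheaf $\underline{\O}_{\cBv_u}$; hence $\F\lotimes(-)$ introduces no $\Tor$-shift, $i_u^*\cEv_w(\F)$ and $i_u^!\cEv_w(\F)$ have cohomology concentrated in exactly the same degrees as over $\O$, and $\cEv_w(\O)$ is perverse if and only if $\cEv_w(\F)$ is.

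For $(1)\Leftrightarrow(4)$ the idea is to move the whole statement to the dual flag variety via $\sigma$. First, since $\muv$ is t-exact and, by~\eqref{eqn:morphism-mu}, conservative (if $\muv M=0$ then $\bigoplus_n\Hom(M,M\la n\ra)=0$, so $M=0$), the sheaf $\cEv_w(\F)=\muv(\cEv^\mix_w(\F))$ is perverse if and only if $\cEv^\mix_w(\F)$ is perverse in $\Dmix_{(\Bv)}(\cBv,\F)$. Second, I rewrite positivity: for the projective object $\cP^\mix_w(\F)$ the group $\Hom_{\Dmix}(\cP^\mix_w(\F),\nmix_t(\F)\la m\ra[k])$ vanishes for $k\ne0$ and equals $[\nmix_t(\F)\la m\ra:\IC^\mix_w(\F)]$ for $k=0$, so by Remark~\ref{rmk:baric-morphisms} the object $\cP^\mix_w(\F)$ lies in $\Dmix_{(B)}(\cB,\F)_{\bleq0}$ if and only if $[\nmix_t(\F)\la m\ra:\IC^\mix_w(\F)]=0$ for all $t$ and $m>0$, which by the reciprocity formula~\eqref{eqn:reciprocity} is $(\cP^\mix_w(\F):\dmix_t(\F)\la m\ra)=0$ for $m>0$, i.e.\ positivity by Proposition~\ref{prop:perv-pos}\eqref{it:proj-delta-filt}. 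Thus $\Perv^\mix_{(B)}(\cB,\F)$ is positively graded if and only if $\cP^\mix_w(\F)\in\Dmix_{(B)}(\cB,\F)_{\bleq0}$ for all $w$. Applying $\sigma$ together with the formulas $\sigma(\cP^\mix_w\la n\ra)\cong\cEv^\mix_{w_0w^{-1}}\la-n\ra[n]$ and $\sigma(\dmix_w\la n\ra)\cong\nv^\mix_{w_0w^{-1}}\la-n\ra[n]$, and observing that a triangulated subcategory is stable under $[1]$ (so the $\nv^\mix_v\la a\ra[-a]$ with $a\ge0$ generate the same subcategory as the $\nv^\mix_v\la a\ra$ with $a\ge0$, namely $\Dmix_{(\Bv)}(\cBv,\F)_{\bgeq0}$), this translates — using that $w\mapsto w_0w^{-1}$ is a bijection of $W$ — to: $\cEv^\mix_v(\F)\in\Dmix_{(\Bv)}(\cBv,\F)_{\bgeq0}$ for every $v\in W$. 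Finally, each $\cEv^\mix_v(\F)$ is self-Verdier-dual, so by Lemma~\ref{lem:baric-stability}\eqref{it:bar-stab-d} this is equivalent to $\cEv^\mix_v(\F)\in\Dmix_{(\Bv)}(\cBv,\F)^\circ$ for all $v$.

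It remains to see that a parity complex lies in $\Dmix_{(\Bv)}(\cBv,\F)^\circ$ if and only if it is perverse — the key point, which uses the fact (recalled at the start of~\S\ref{ss:stalks-deltao}) that parity sheaves have stalks, and hence also costalks, pure of weight $0$. On each stratum $\cBv_u$, the object $i_u^*\cEv^\mix_v(\F)$ (resp.\ $i_u^!\cEv^\mix_v(\F)$) is then a direct sum of copies of $\uuF_{\cBv_u}\{m\}$ with no homological shift; such a summand lies in $\Dmix_{(\Bv)}(\cBv_u,\F)_{\bleq0}$ exactly when $m\ge0$ by Example~\ref{ex:baric-stratum}, and, being $\uuF_{\cBv_u}\la-m\ra[m]$, lies in $\p\Dmix_{(\Bv)}(\cBv_u,\F)^{\le0}$ (i.e.\ in perverse degree $-m$) exactly when $m\ge0$. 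Since membership of $\cEv^\mix_v(\F)$ in $\Dmix_{(\Bv)}(\cBv,\F)_{\bleq0}$ is detected stratum by stratum on $i_u^*$ (adjunction together with Remark~\ref{rmk:baric-morphisms}), and perversity is detected on the $i_u^*$ and $i_u^!$, self-duality reduces both the $\Dmix_{\bgeq0}$ and the perverse-costalk conditions to the corresponding $i_u^*$ conditions, and we conclude that $\cEv^\mix_v(\F)\in\Dmix_{(\Bv)}(\cBv,\F)^\circ$ if and only if $\cEv^\mix_v(\F)$ is perverse, which completes $(1)\Leftrightarrow(4)$. I expect the main obstacle to be this last step: keeping the bookkeeping straight for how $\sigma$ interchanges the two baric structures, and recognizing that purity of the stalks of parity sheaves is exactly what makes ``in the baric heart'' collapse to ``perverse.''
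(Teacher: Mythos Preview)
Your proof is correct, and follows the same overall architecture as the paper: $(1)\Leftrightarrow(2)$ via Theorem~\ref{thm:positivity}, $(3)\Leftrightarrow(4)$ via freeness of stalks and costalks of $\O$-parity complexes, and $(1)\Leftrightarrow(4)$ via the Koszul/Radon equivalence between $\cB$ and $\cBv$.

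The only real difference is in the execution of $(1)\Leftrightarrow(4)$. The paper proceeds by citing \cite[Corollary~5.6]{modrap2} to translate ``$\cEv_w(\F)$ is perverse'' directly into the combinatorial condition $(\cT^\mix_v:\nmix_u\la n\ra)=0$ for $n>0$, then applies $\Radon^\mix$ to turn this into $(\cP^\mix_v:\dmix_u\la n\ra)=0$ for $n>0$, which is Proposition~\ref{prop:perv-pos}\eqref{it:proj-delta-filt}. You instead unpack this black box: you recast positivity as the baric condition $\cP^\mix_w(\F)\in\Dmix_{\bleq0}$, transport it through $\sigma$ to $\cEv^\mix_v(\F)\in\Dmix^\circ$, and then prove from scratch that a parity complex lies in $\Dmix^\circ$ if and only if it is perverse, using stratum-wise purity of its stalks. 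Your argument is longer but entirely self-contained within the paper; the paper's is shorter because it outsources this last equivalence to the companion paper. Both routes are using the same underlying fact---that $\kappa$ matches filtration multiplicities of tiltings on one side with stalk dimensions of parity sheaves on the other---you have simply rederived it through the baric formalism developed in Section~\ref{sec:weights}.
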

\begin{proof}
The equivalence of the first two statements follows from Theorem~\ref{thm:positivity}. The equivalence of the last two statements follows from the fact that the objects $\cEv_w(\O)$ have free stalks and costalks by definition.

By~\cite[Corollary~5.6]{modrap2}, the last statement is equivalent to the condition that $(\cT^\mix_v : \nmix_u\la n\ra ) = 0$ for all $n > 0$ and all $u,v \in W$. Using the equivalence $\Radon^\mix$, the latter is equivalent to requiring that $(\cP^\mix_v : \dmix_u \la n\ra) = 0$ for all $n > 0$ and all $u,v \in W$.  By Proposition~\ref{prop:perv-pos}, we conclude that the first and third statements are equivalent.
\end{proof}

\begin{thm}[$Q$-Koszulity]\label{thm:main2}
The following are equivalent:
\begin{enumerate}
\item The category $\Perv^\mix_{(B)}(\cB,\F)$ is metameric.\label{it:metameric}
\item The category $\Perv^\mix_{(\Bv)}(\cBv,\F)$ is standard $Q$-Koszul.\label{it:q-koszul}
\item For all $w \in W$, we have $\dv^\circ_w(\O) \cong \ICv^\mix_w(\O)$, and $\ICv^\mix_w(\O)$ has torsion-free stalks.\label{it:torsion-free}
\item For all $w \in W$, we have $\dv^\circ_w(\O) \cong \ICv^\mix_w(\O)$, and $\ICv_w(\O)$ has torsion-free stalks.\label{it:torsion-free-classical}
\end{enumerate}
\end{thm}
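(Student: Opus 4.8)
The plan is to prove $\eqref{it:q-koszul}\Leftrightarrow\eqref{it:torsion-free}$ and $\eqref{it:torsion-free}\Leftrightarrow\eqref{it:torsion-free-classical}$ essentially formally, and then to establish $\eqref{it:metameric}\Leftrightarrow\eqref{it:q-koszul}$ via Koszul duality, which is where the real work lies.

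For $\eqref{it:q-koszul}\Leftrightarrow\eqref{it:torsion-free}$ I would invoke Corollary~\ref{cor:Q-Koszul} with $X=\cBv$; its running hypothesis $\ICv^\mix_w(\K)\cong\cEv^\mix_w(\K)$ is the classical result of~\cite{kl}. For $\eqref{it:torsion-free}\Leftrightarrow\eqref{it:torsion-free-classical}$, since the condition $\dv^\circ_w(\O)\cong\ICv^\mix_w(\O)$ appears in both, it suffices to see that $\ICv^\mix_w(\O)$ has torsion-free stalks if and only if $\ICv_w(\O)$ does; this follows from the forgetful functor $\muv$, which is t-exact, satisfies $\muv(\ICv^\mix_w(\O))\cong\ICv_w(\O)$, commutes with restriction to the strata, and on a single stratum merely forgets the internal grading, so it preserves and reflects torsion-freeness of stalks.

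For $\eqref{it:metameric}\Leftrightarrow\eqref{it:q-koszul}$ I would first reformulate the $Q$-Koszul side: by Verdier duality on $\cBv$ (as in the proof of Corollary~\ref{cor:Q-Koszul}), condition~\eqref{it:q-koszul} is equivalent to $\Ext^k(\dv^\circ_w(\F),\nv^\mix_t(\F)\la n\ra)=0$ for all $n\neq-k$, which by adjunction means that the stalks of each $\dv^\circ_w(\F)$ are pure of weight $0$. On the other side, by Theorem~\ref{thm:qher-wdelta}, $\Perv^\mix_{(B)}(\cB,\F)$ is metameric precisely when it contains objects $\wDelta^\gr_w(\F)$ and $\wnabla^\gr_w(\F)$ with the listed properties; since both conditions force positive gradedness, by Theorem~\ref{thm:main1} and Remark~\ref{rmk:dcirc-perverse} one may work under the hypothesis that $\Perv^\mix_{(B)}(\cB,\F)$ and $\Perv^\mix_{(\Bv)}(\cBv,\F)$ are as in Proposition~\ref{prop:parshall-scott}. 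The bridge is the Koszul-type equivalence $\sigma\colon\Dmix_{(B)}(\cB,\F)\simto\Dmix_{(\Bv)}(\cBv,\F)$, which interchanges the internal shift $\{1\}$ with the Tate twist $\la1\ra$ and sends $\dmix_w\mapsto\nv^\mix_{w_0w^{-1}}$, $\nmix_w\mapsto\dv^\mix_{w_0w^{-1}}$, $\cP^\mix_w\mapsto\cEv^\mix_{w_0w^{-1}}$ (up to internal shift). I would show that $\sigma$ carries the defining data of $\wDelta^\gr_w(\F)$ and $\wnabla^\gr_w(\F)$ --- the $\Ext$-vanishing properties~\eqref{it:qher-ext0}--\eqref{it:qher-ext-neg} and the standard and costandard filtrations of~\eqref{it:qher-filt} --- to the statement that, for $v=w_0w^{-1}$, the baric truncation triangles $\beta_{\bleq-1}\dv^\mix_v\to\dv^\mix_v\to\dv^\circ_v$ and $\nv^\circ_v\to\nv^\mix_v\to\beta_{\bgeq1}\nv^\mix_v$ on $\cBv$ have perverse terms with $\dv^\circ_v(\F)$ and $\nv^\circ_v(\F)$ of pure stalks --- that is, to stalk-purity of all $\dv^\circ_w(\F)$. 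Conversely, starting from stalk-purity on $\cBv$, one applies $\sigma^{-1}$ to these truncation triangles and checks that the resulting objects on $\cB$ satisfy the axioms of Theorem~\ref{thm:qher-wdelta}, so that $\Perv^\mix_{(B)}(\cB,\F)$ is metameric.

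The main obstacle is exactly this translation. One must control precisely how $\sigma$ interacts with three filtrations at once --- the baric truncation functors $\beta_{\bleq n},\beta_{\bgeq n}$, the perverse $t$-structure, and the weight structure --- bearing in mind that $\sigma$ is only a triangulated equivalence exchanging $\{1\}$ and $\la1\ra$, so that an honest sub/quotient filtration on one side becomes an iterated cone on the other, and that the relabelling $w\mapsto w_0w^{-1}$ reverses the Bruhat order. The crux is that the perverse-cohomological bookkeeping underlying Theorem~\ref{thm:qher-wdelta} on $\cB$ is converted by $\sigma$ into the weight bookkeeping underlying the stalk-purity criterion of Lemma~\ref{lem:stalks-dcirc-ic} on $\cBv$; verifying this match, and that neither spurious extensions nor torsion are introduced, is the heart of the argument.
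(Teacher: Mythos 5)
Your decomposition is exactly the paper's: \eqref{it:q-koszul}$\Leftrightarrow$\eqref{it:torsion-free} via Corollary~\ref{cor:Q-Koszul}, \eqref{it:torsion-free}$\Leftrightarrow$\eqref{it:torsion-free-classical} via $\muv$, and the bridge \eqref{it:metameric}$\Leftrightarrow$\eqref{it:torsion-free} via $\sigma$, Theorem~\ref{thm:qher-wdelta}, the baric truncation triangles, and the stalk-purity criterion of Lemma~\ref{lem:stalks-dcirc-ic}. The forward direction is essentially the paper's argument: $\sigma(\wDelta^\gr_w)\cong\nv^\circ_{w_0w^{-1}}$ is identified by matching its defining triangle with the baric truncation triangle of $\nv^\mix_{w_0w^{-1}}$, costalk-purity of $\nv^\circ$ follows from the standard filtration of $\wDelta^\gr_w$, and Verdier duality plus Lemma~\ref{lem:stalks-dcirc-ic} give \eqref{it:torsion-free}.

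There is, however, a genuine gap in the converse direction, hidden in your sentence ``since both conditions force positive gradedness \dots one may work under the hypothesis that $\Perv^\mix_{(B)}(\cB,\F)$ and $\Perv^\mix_{(\Bv)}(\cBv,\F)$ are as in Proposition~\ref{prop:parshall-scott}.'' Condition \eqref{it:torsion-free} gives $\dv^\circ_w(\O)\cong\ICv^\mix_w(\O)$, which by Theorem~\ref{thm:positivity} yields positive gradedness of $\Perv^\mix_{(\Bv)}(\cBv,\F)$ --- but metamericity is a property of $\Perv^\mix_{(B)}(\cB,\F)$, and by Theorem~\ref{thm:main1} positive gradedness of \emph{that} category is equivalent to perversity of the parity sheaves $\cEv_w(\F)$ on the \emph{dual} flag variety. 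Neither statement formally implies the other. The paper closes this gap with a substantive argument: it shows that under \eqref{it:torsion-free} the indecomposable tilting objects of $\Perv^\circ_{(\Bv)}(\cBv,\F)$ have pure stalks (via their $\dv^\circ$-filtrations and Lemma~\ref{lem:stalks-dcirc-ic}), hence by Lemma~\ref{lem:wt-stalks} and Verdier self-duality are pure of weight $0$, hence coincide with the $\cEv^\mix_w$, which are therefore perverse; only then does Theorem~\ref{thm:main1} apply. You need to supply this step (or an equivalent one) before you can even speak of $\Perv^\mix_{(B)}(\cB,\F)$ being metameric. Separately, your assertion that $\sigma(\nmix_w)\cong\dv^\mix_{w_0w^{-1}}$ is not established anywhere (it is one of the question marks in Figure~\ref{fig}): $\sigma$ is only known on standard and projective objects, via $\sigma(\dmix_w\la n\ra)\cong\nv^\mix_{w_0w^{-1}}\la-n\ra[n]$ and $\sigma(\cP^\mix_w\la n\ra)\cong\cEv^\mix_{w_0w^{-1}}\la-n\ra[n]$. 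Fortunately the argument never needs $\sigma(\nmix_w)$, so this is a harmless misstatement rather than a fatal one, but you should remove it.
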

\begin{proof}
The equivalence
\eqref{it:q-koszul}${}\Longleftrightarrow{}$\eqref{it:torsion-free} 
follows from Corollary~\ref{cor:Q-Koszul}. The equivalence
\eqref{it:torsion-free}${}\Longleftrightarrow{}$\eqref{it:torsion-free-classical} follows from~\eqref{eqn:morphism-mu} (or rather its analogue for $\cBv$), using the fact that $\muv(\ICv^\mix_w(\O)) \cong \ICv_w(\O)$ and $\muv(\nv^\mix_v(\O)) \cong \nv_v(\O)$, and the observation that an object $M$ of the derived category of finitely generated $\O$-modules has free cohomology objects iff $\Hom^k(M,\O)$ is a free $\O$-module for all $k \in \Z$.

\eqref{it:metameric}${}\Longrightarrow{}$\eqref{it:torsion-free}.  Assuming that $\Perv^\mix_{(B)}(\cB,\F)$ is metameric, Theorem~\ref{thm:qher-wdelta} gives us a class of objects $\{ \wDelta^\mix_w \}_{w \in W}$ in $\Perv^\mix_{(B)}(\cB,\F)$.  Form the short exact sequence $K_w \hookrightarrow \wDelta^\mix_w \twoheadrightarrow \dmix_w(\F)$.  Recall that $K_w$ has a filtration by various $\dmix_u\la n\ra$ with $n < 0$.  Therefore, $\sigma(K_w)$ is an iterated extension of various $\nv^\mix_u \la -n \ra [n]$ with $n < 0$.  In particular, $\sigma(K_w) \in \Dmix_{(\Bv)}(\cBv,\F)_{\bgeq 1}$.

On the other hand, the $\wDelta^\mix_w$ have the property that $\Ext^k(\wDelta^\mix_w, \dmix_u \la n\ra) = 0$ for all $k$ and all $n < 0$.  Applying $\sigma$, we obtain that
\[
\Ext^k(\sigma(\wDelta^\mix_w), \nv^\mix_{w_0 u^{-1}} \la -n \ra [n]) = 0 \qquad\text{for all $k$ and all $n < 0$.}
\]
This implies that $\sigma(\wDelta^\mix_w) \in \Dmix_{(\Bv)}(\cBv,\F)_{\bleq 0}$ (see Remark~\ref{rmk:baric-morphisms}).  Thus, the following two distinguished triangles must be isomorphic:
\[
\sigma(\wDelta^\mix_w) \to \sigma(\dmix_w) \to \sigma(K_w[1]) \xrightarrow{[1]}, \quad
\nv^\circ_{w_0 w^{-1}} \to \nv^\mix_{w_0 w^{-1}} \to \beta_{\bgeq 1}\nv^\mix_{w_0 w^{-1}} \xrightarrow{[1]}
\]
(see Lemma~\ref{lem:baric-basic}\eqref{it:baric-tri});
in particular we obtain an isomorphism 
\[
\sigma(\wDelta^\mix_w) \cong \nv^\circ_{w_0 w^{-1}}.
\]
Now, $\wDelta^\mix_w$ is an iterated extension of various $\dmix_u\la n\ra$, so $\nv^\circ_{w_0 w^{-1}}$ is an iterated extension of various $\nv^\mix_u\{ n\}$.  In particular, the costalks of $\nv^\circ_{w_0 w^{-1}}$ are extensions of the costalks of the $\nv^\mix_u\{ n\}$.  The latter are pure of weight~$0$, so the same holds for $\nv^\circ_{w_0 w^{-1}}$.  By Verdier duality, the stalks of the objects $\dv^\circ_{w_0 w^{-1}}$ are pure of weight~$0$.  By Lemma~\ref{lem:stalks-dcirc-ic}, we find that condition~\eqref{it:torsion-free} holds.

\eqref{it:torsion-free}{}${}\Longrightarrow{}$\eqref{it:metameric}. When~\eqref{it:torsion-free} holds, by Theorem~\ref{thm:main1} the category $\Perv^\mix_{(\Bv)}(\cBv,\F)$ is positively graded. First, let us prove that the category $\Perv^\mix_{(B)}(\cB,\F)$ also is positively graded. 
By Lemma~\ref{lem:perv-circ-qher}, $\Perv^\circ_{(\Bv)}(\cBv,\F)$ is a quasihereditary category (see Remark~\ref{rmk:dcirc-perverse}).  We claim that the indecomposable tilting objects in this category are the parity sheaves $\cEv_w^\mix$. Indeed, let $\cTv^\circ_w$ be the unique indecomposable tilting object of $\Perv^\circ_{(\Bv)}(\cBv,\F)$ whose support is $\overline{\cBv_w}$.  Then $\cTv^\circ_w$ has a filtration by various $\dv^\circ_u$, whose stalks are pure of weight $0$ by Lemma~\ref{lem:stalks-dcirc-ic}.  Therefore, the stalks of $\cTv^\circ_w$ are also pure of weight~$0$. By Lemma~\ref{lem:wt-stalks}, it follows that $\cTv^\circ_w$ has weights${}\le 0$. Since $\cTv^\circ_w$ is Verdier-self-dual, it also has weights${}\ge 0$, so it must be pure of weight $0$, and hence a direct sum of various $\cEv^\mix_u\{m\}$ by the remarks in~\S\ref{ss:weights}. By indecomposability and support considerations, we even obtain that $\cTv_w^\circ \cong \cEv^\mix_w \{m\}$ for some $m \in \Z$. Considering the restriction to $\cBv_w$, we obtain that $m=0$, i.e.~that $\cTv_w^\circ \cong \cEv^\mix_w$, as claimed. Now the objects $\dv^\circ_u$ are perverse sheaves (see Remark~\ref{rmk:dcirc-perverse}), so the objects $\cTv_w^\circ$ are also perverse; we deduce that $\cEv^\mix_w$ is a perverse sheaf for any $w \in W$. Using Theorem~\ref{thm:main1} again, this finishes the proof of the fact that $\Perv^\mix_{(B)}(\cB,\F)$ is positively graded.

To conclude, we will essentially reverse the argument used in the proof of the implication \eqref{it:metameric}${}\Longrightarrow{}$\eqref{it:torsion-free}.  Let us define $\wDelta^\mix_w \in \Dmix_{(B)}(\cB,\F)$ to be $\sigma^{-1}(\nv^\circ_{w_0 w^{-1}})$.  Since~\eqref{it:torsion-free} holds, using Lemma~\ref{lem:stalks-dcirc-ic} and Verdier duality, we know that the costalks of $\nv^\circ_w$ are pure of weight $0$ for all $w \in W$.  In other words, for any $u \in W$, the object $\iv_{u*}\iv_u^! \nv^\circ_w$ is a direct sum of various $\nv^\mix_u\{n\}$ with $n \in \Z$. In fact, since $\nv^\circ_w$ is perverse (see Remark~\ref{rmk:dcirc-perverse}) we must have
$n \le 0$. We even have $n < 0$ unless $u = w$, and in that case, we have $\nv^\mix_w=
\iv_{w*}\iv_w^!\nv^\circ_w$. (The first claim follows from the following computation for $u<w$: $\Hom(\uuF, \iv_u^! \nv^\circ_w) \cong \Hom(\dv^\mix_u, \nv^\circ_w) \cong \Hom(\dv^\circ_u,\nv^\circ_w)=0$, where the second isomorphism follows from~\eqref{eqn:pos-higher-ext}. The second claim is obvious from the construction of $\nv^\circ_w$ in Proposition~\ref{prop:parshall-scott}.)
A routine recollement argument shows that $\nv^\circ_w$ is an iterated extension of the various $\iv_{u*}\iv_u^!\nv^\circ_w$, and hence of $\nv^\mix_w$ together with various $\nv^\mix_u\{n\}$ with $n < 0$ and $u<w$.  Applying $\sigma^{-1}$ to this description, we find that $\wDelta^\mix_w$ is an iterated extension of $\dmix_w$ and various $\dmix_u\la n\ra$ with $n < 0$ and $u<w$.  In particular, $\wDelta^\mix_w$ is a perverse sheaf with a standard filtration.  

Next, we claim that $\Ext^k(\wDelta^\mix_w, \IC^\mix_v\la n\ra) = 0$ for all $k \ge 1$ if $n \ne 0$, or else if $n = 0$ and $v \le w$. For $n>0$ this follows from~\eqref{eqn:pos-higher-ext}, and one can easily check using induction on $v$ that the conditions for $n \leq 0$ are equivalent to 
\[
\Ext^k(\wDelta^\mix_w, \dmix_v\la n\ra) = 0 \qquad\text{for all $k \ge 1$ if $n < 0$, or else if $n = 0$ and $v \le w$.}
\]
Applying $\sigma$, this is equivalent to a similar vanishing claim about
\[
\Hom(\nv^\circ_{w_0 w^{-1}}, \nv^\mix_{w_0 v^{-1}}\la -n\ra[k+n]).
\]
If $n<0$, this claim follows from~\eqref{eqn:pos-higher-ext}. If $n=0$, it holds for reasons of support.

Referring to Theorem~\ref{thm:qher-wdelta}, we see that we have already shown that the objects $\wDelta^\mix_w$ enjoy properties~\eqref{it:qher-ext0}, \eqref{it:qher-ext-neg}, and~\eqref{it:qher-filt}.  We will now show that they satisfy property~\eqref{it:qher-head} as well.  The $\Ext^1$-case of the vanishing proved above shows that $\wDelta^\mix_w$ is projective as an object of the Serre subcategory of $\Perv^\mix_{(B)}(\cB,\F)$ generated by all $\IC^\mix_v\la n\ra$ with $n < 0$, together with the $\IC^\mix_v$ with $v \le w$. It is indecomposable because $\nv^\circ_w$ is, so it is the projective cover of some simple object.  Its unique simple quotient must be the head of one of the standard objects in its standard filtration.  By weight filtration considerations, that unique simple quotient must be $\IC^\mix_w$.

We have shown that the objects $\wDelta^\mix_w$ satisfy the properties listed in Theorem~\ref{thm:qher-wdelta}. It is clear that the objects $\wnabla^\mix_w:=\D(\wDelta^\mix_w)$ will satisfy the dual conditions, so that Theorem~\ref{thm:qher-wdelta} implies that $\Perv^\mix_{(B)}(\cB,\F)$ is metameric.
\end{proof}

\begin{rmk}
When the conditions of Theorem~\ref{thm:main2} are satisfied, one can complete the description~\cite[Figure~1]{modrap2} of the behavior of the various special objects under the equivalence $\kappa$, as shown in Figure~\ref{fig}. (Here the isomorphism on the third line follows from Lemma~\ref{lem:perv-circ-qher}, and question marks indicate objects for which we don't have an explicit description.)
\begin{figure}
\[
\xymatrix@R=0.1cm@C=1.5cm{
\Dmix_{(B)}(\cB,\F) \ar@/^4ex/[rr]^-{\sigma} & \Dmix_{(B)}(\cB,\F)  \ar[l]^-{\Radon^\mix}_-{\sim} \ar[r]_-{\kappa}^-{\sim} & \Dmix_{(B)}(\cB,\F) \\
? & \dmix_w \ar@{|->}[l] \ar@{|->}[r] & \dv^\mix_{w^{-1}} \\
\dmix_{w w_0} & \nmix_w \ar@{|->}[l] \ar@{|->}[r] & \nv^\mix_{w^{-1}} \\
? & \cE_w^\mix \cong \cT^\circ_w \ar@{|->}[l] \ar@{|->}[r] & \cTv^\mix_{w^{-1}} \\
\cP^\mix_{w w_0} & \cT^\mix_w \ar@{|->}[l] \ar@{|->}[r] & \cEv_{w^{-1}}^\mix \cong \cTv^\circ_{w^{-1}} \\
\wDelta^\mix_{w w_0} & ? \ar@{|->}[l] \ar@{|->}[r] & \nv^\circ_{w^{-1}}
}
\]
\caption{Behavior of various objects under the equivalence $\sigma$.}
\label{fig}
\end{figure}
\end{rmk}

\begin{rmk}
Suppose that the conditions in Theorem~\ref{thm:main2} hold.  By Lemma~\ref{lem:perv-circ-qher}, the $\cEv^\mix_w(\F)$ are precisely the indecomposable tilting objects in $\Perv^\circ_{(\Bv)}(\cBv,\F)$.  Since the equivalence $\sigma^{-1}: \Dmix_{(\Bv)}(\cBv,\F) \to \Dmix_{(B)}(\cB,\F)$ takes these to projective objects in $\Perv^\mix_{(B)}(\cB,\F)$, the category $\Perv^\mix_{(B)}(\cB,\F)$ is the ``$T$-Koszul dual'' to $\Perv^\mix_{(\Bv)}(\cBv,\F)$ in the sense of Madsen~\cite{madsen}. (See Remark~\ref{rmk:t-koszul}.)
\end{rmk}

\subsection{Koszulity}

We conclude this paper with a proof of the converse to Corollary~\ref{cor:parity-koszul}, in the case of flag varieties.

\begin{thm}
\label{thm:koszulity}
The following are equivalent:
\begin{enumerate}
\item For all $w \in W$ we have $\cE_w(\F) \cong \IC_w(\F)$.\label{it:IC-parity}
\item The category $\Perv^\mix_{(B)}(\cB,\F)$ is Koszul.\label{it:koszul}
\item The category $\Perv^\mix_{(B)}(\cB,\F)$ is positively graded, and $\Perv^\mix_{(B)}(\cB,\F)^\circ$ is a semisimple category.\label{it:koszul-weak}
\end{enumerate}
Moreover, these statements hold if and only if their analogues for $\cBv$ hold.
\end{thm}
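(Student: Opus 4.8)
The plan is to prove the cycle $(1)\Rightarrow(2)\Rightarrow(3)\Rightarrow(1)$ and then to deduce the statement about the Langlands dual group from it. For $(1)\Rightarrow(2)$ I would first lift $(1)$ to the mixed setting, i.e.\ show $\cE^\mix_w(\F)\cong\IC^\mix_w(\F)$ for all $w$. Since $\mu$ is $t$-exact and, by~\eqref{eqn:morphism-mu}, annihilates no nonzero object, it commutes with perverse cohomology and reflects isomorphisms; as $\mu(\cE^\mix_w)\cong\cE_w\cong\IC_w$ is simple perverse, $\cE^\mix_w(\F)$ is perverse and indecomposable with head $\IC^\mix_w(\F)$, and the kernel of $\cE^\mix_w(\F)\twoheadrightarrow\IC^\mix_w(\F)$ is killed by $\mu$, hence vanishes. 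Then Corollary~\ref{cor:parity-koszul} gives that $\Perv^\mix_{(B)}(\cB,\F)$ is Koszul. For $(2)\Rightarrow(3)$: a Koszul category is positively graded (cf.\ the remark after~\eqref{eqn:def-koszul}), so $\cA^\circ:=\Perv^\mix_{(B)}(\cB,\F)^\circ$ is defined; Koszulity gives $\Ext^k(\IC^\mix_s,\IC^\mix_t)=\Ext^k(\IC^\mix_s,\IC^\mix_t\la 0\ra)=0$ for $k\neq 0$, and since $\Ext^1_{\cA^\circ}(\IC^\mix_s,\IC^\mix_t)\hookrightarrow\Ext^1_{\Perv^\mix_{(B)}(\cB,\F)}(\IC^\mix_s,\IC^\mix_t)$, the category $\cA^\circ$ has no nonsplit self-extensions of simple objects, hence is semisimple.

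For $(3)\Rightarrow(1)$ — the most substantive step — since $\cA^\circ$ is semisimple every object of $\cA^\circ$ automatically carries a standard and a costandard filtration, so $\Perv^\mix_{(B)}(\cB,\F)$ is metameric. Theorem~\ref{thm:main2} then forces $\dv^\circ_w(\O)\cong\ICv^\mix_w(\O)$ on $\cBv$, whence, by Theorem~\ref{thm:main1} applied to $\Gv$ (for which $\ell$ is still good), $\Perv^\mix_{(\Bv)}(\cBv,\F)$ is positively graded and $\cE_w(\F)$ is perverse on $\cB$ for all $w$. Consequently $\cE^\mix_w(\F)$ is perverse (again because $\mu$ reflects perversity), and by Lemma~\ref{lem:perv-circ-qher}\eqref{it:pervo-tilt} — whose hypotheses hold by Remark~\ref{rmk:dcirc-perverse}, as $\Perv^\mix_{(B)}(\cB,\F)$ is positively graded — it is the indecomposable tilting object of $\Perv^\circ_{(B)}(\cB,\F)$ supported on $\overline{\cB_w}$. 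But $\Perv^\circ_{(B)}(\cB,\F)=\cA^\circ$ (Remark~\ref{rmk:dcirc-perverse}) is semisimple, so its indecomposable tilting objects are its simple objects; hence $\cE^\mix_w(\F)\cong\IC^\circ_w(\F)\cong\IC^\mix_w(\F)$ (the last isomorphism by Proposition~\ref{prop:perv-pos}\eqref{it:ic-circ-mix}), and applying $\mu$ gives $(1)$.

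For the Langlands-dual statement, the cycle above applies verbatim with $\Gv$ in place of $G$, so conditions $(1)$--$(3)$ are equivalent for $\cBv$ as well, and the whole assertion reduces to: condition~$(1)$ holds for $\cB$ if and only if it holds for $\cBv$. To approach this I would show, using the $(3)\Rightarrow(1)$ argument together with the fact that $\F(\IC^\mix_w(\O))\cong\IC^\mix_w(\F)$ (which follows from $\Delta^\circ_w(\O)\cong\IC^\mix_w(\O)$ and the semisimplicity of $\cA^\circ$), that $(1)$ for $\cB$ forces $\IC^\mix_w(\O)\cong\cE^\mix_w(\O)$ for all $w$ — equivalently, that the stalks \emph{and} costalks of $\IC_w(\O)$ are all torsion-free — and then invoke the fact that torsion-freeness of the stalks and costalks of the $\IC$-sheaves on a flag variety is controlled purely by the underlying Coxeter system $(W,S)$ (for instance through the diagrammatic Hecke category of Soergel bimodules), which $G$ and $\Gv$ share.

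I expect this last point to be the main obstacle: the equivalence $\kappa$ of~\cite[Theorem~5.4]{modrap2} does \emph{not} preserve intersection cohomology sheaves (it sends $\cE^\mix_w$ to a tilting object $\cTv^\mix_{w^{-1}}$ on the dual side, not to $\ICv^\mix_{w^{-1}}$), so one cannot transfer the coincidence $\IC^\mix_w(\O)\cong\cE^\mix_w(\O)$ directly along $\kappa$; one is obliged to argue through the common combinatorial/Hecke-categorical description of parity sheaves on $\cB$ and $\cBv$, or through a careful bookkeeping of which of the conditions in Theorems~\ref{thm:main1} and~\ref{thm:main2} are manifestly left–right symmetric and which are not. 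Everything else in the proof is either formal homological algebra or a direct appeal to the results recalled above.
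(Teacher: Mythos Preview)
Your cycle $(1)\Rightarrow(2)\Rightarrow(3)\Rightarrow(1)$ is correct, and your $(3)\Rightarrow(1)$ is a genuinely different (and valid) argument from the paper's. The paper does \emph{not} prove $(3)\Rightarrow(1)$; it proves $(3)\Rightarrow(1)^\vee$ directly, as follows: positivity of $\Perv^\mix_{(B)}(\cB,\F)$ gives $\cEv_w(\F)$ perverse on $\cBv$ via Theorem~\ref{thm:main1}, and semisimplicity of $\cA^\circ$ says the ring $\Hom\bigl(\bigoplus_v\cP^\mix_v,\bigoplus_v\cP^\mix_v\bigr)$ (no twist) is $\bigoplus_v\bk$; transporting this along the equivalence $\sigma$ yields $\Hom(\cEv_v(\F),\cEv_w(\F))=0$ for $v\neq w$, and a minimal-counterexample argument then forces every $\cEv_w(\F)$ to be simple. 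Your route instead bounces off Theorems~\ref{thm:main1} and~\ref{thm:main2} to get $\cE_w(\F)$ perverse on $\cB$, and then uses that in a semisimple quasihereditary category the tilting objects coincide with the simples. Both arguments are clean; yours is perhaps more conceptual, while the paper's is more direct and uses only $\sigma$.

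The gap is in the Langlands-dual statement, and you correctly identify it as the obstacle. The paper's approach makes it disappear: having $(3)\Rightarrow(1)^\vee$ and (by symmetry) $(3)^\vee\Rightarrow(1)$, together with the easy implications, gives a six-term cycle linking all statements on both sides at once. Your approach proves only $(3)\Rightarrow(1)$ on each side separately, and your proposed bridge --- reducing to torsion-freeness of stalks \emph{and} costalks of $\IC_w(\O)$ and then invoking a Coxeter-system/Soergel-bimodule argument --- is external to the paper and not fully justified here (in particular, passing from torsion-free stalks to torsion-free costalks for $\IC_w(\O)$ requires care, since $\IC_w(\O)$ is not in general Verdier self-dual). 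The fix is simply to redirect your $(3)\Rightarrow(1)$ argument to land on the dual side, or to adopt the paper's $\sigma$-transfer argument for this one implication.
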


\begin{proof}
In this proof, we will write \eqref{it:IC-parity}$^\vee$ to refer to the analogue of statement~\eqref{it:IC-parity} for $\cBv$, and likewise for the other assertions in the theorem.

The implications
\eqref{it:IC-parity}${}\Longrightarrow{}$\eqref{it:koszul} and \eqref{it:IC-parity}$^\vee${}${}\Longrightarrow{}$\eqref{it:koszul}$^\vee$
follow from Corollary~\ref{cor:parity-koszul}. 

The implications \eqref{it:koszul}${}\Longrightarrow{}$\eqref{it:koszul-weak} and \eqref{it:koszul}$^\vee${}${}\Longrightarrow{}$\eqref{it:koszul-weak}$^\vee$ are obvious.

\eqref{it:koszul-weak}${}\Longrightarrow{}$\eqref{it:IC-parity}$^\vee$. Since $\Perv^\mix_{(B)}(\cB,\F)$ is positively graded, by Theorem~\ref{thm:main1}, $\cEv_w(\F)$ is perverse. Now, 
the fact that $\Perv^\mix_{(B)}(\cB,\F)^\circ$ is semisimple implies that the ring
\[
\Hom_{\Perv^\mix_{(B)}(\cB,\F)} \left( \bigoplus_{v \in W} \cP^\mix_v(\F), \bigoplus_{v \in W} \cP^\mix_v(\F) \right) 
\]
is isomorphic to $\bigoplus_v \bk$ (where $1$ in the copy of $\bk$ parametrized by $v$ corresponds to the identity morphism of $\cP^\mix_v(\F)$). Using equivalence $\sigma$, we deduce a similar claim for the objects $\cEv^\mix_v(\F)$, $v \in W$. It follows that
\begin{equation}
\label{eqn:morphisms-parity}
\Hom_{\Db_{(B)}(\cB,\F)} \bigl( \cEv_v(\F), \cEv_w(\F) \bigr) = 0 \qquad \text{unless $v=w$.}
\end{equation}
Now assume that there exists $w \in W$ such that the perverse sheaf $\cEv_w(\F)$ is not simple, and choose $w \in W$ minimal (for the Bruhat order) with this property. Since $\cEv_w(\F)$ is supported on the closure of $\cBv_w$, and since its restriction to $\cBv_w$ is $\uuF$, either the top or the socle of $\cEv_w(\F)$ contains a simple object $\ICv_v(\F)$ with $v < w$. Then there exists either a non zero morphism $\cEv_w(\F) \to \ICv_v(\F)$, or a nonzero morphism $\ICv_v(\F) \to \cEv_w(\F)$. Since $\ICv_v(\F) \cong \cEv_v(\F)$ by minimality, this contradicts~\eqref{eqn:morphisms-parity} and finishes the proof of the implication.

By symmetry we also obtain the implication \eqref{it:koszul-weak}$^\vee${}${}\Longrightarrow{}$\eqref{it:IC-parity}, which finishes the proof.
\end{proof}

\end{document}